\newcommand{\fk}{\mathfrak k}
\newcommand{\fg}{\mathfrak g}
\newcommand{\dg}{\dot{\mathfrak g}}
\newcommand{\dfh}{\dot{\mathfrak h}}
\newcommand{\dQ}{\dot{Q}}
\newcommand{\dE}{\dot{E}}
\newcommand{\al}{\alpha}
\newcommand{\wh}{\widehat}
\newcommand{\uce}{\mathfrak{uce}}
\newcommand{\Aut}{\mathrm{Aut}}
\newcommand{\dmu}{\dot{\mu}}
\newcommand{\dal}{\dot{\alpha}}
\newcommand{\ot}{\otimes}
\newcommand{\CK}{\mathcal{K}}
\newcommand{\CL}{\mathcal{L}}
\newcommand{\C}{\mathbb{C}}
\newcommand{\R}{\mathbb R}
\newcommand{\Z}{\mathbb Z}
\newcommand{\rd}{\mathrm{d}}
\newcommand{\rk}{\mathrm{k}}
\newcommand{\la}{\langle}
\newcommand{\ra}{\rangle}
\newcommand{\ba}{\begin {eqnarray}}
\newcommand{\ea}{\end {eqnarray}}
\newcommand{\baa}{\begin {eqnarray*}}
\newcommand{\eaa}{\end {eqnarray*}}
\newcommand{\be}{\begin {equation}}
\newcommand{\ee}{\end {equation}}
\newcommand{\bee}{\begin {equation*}}
\newcommand{\eee}{\end {equation*}}
\newcommand{\te}[1]{\textnormal{{#1}}}
\theoremstyle{Theorem}
\theoremstyle{Theorem}
\newtheorem{thm}{Theorem}[section]
\newtheorem{cort}[thm]{Corollary}
\newtheorem{lemt}[thm]{Lemma}
\newtheorem{prpt}[thm]{Proposition}
\theoremstyle{Theorem}
\theoremstyle{Theorem}
\theoremstyle{Plain}
\theoremstyle{Definition}
\newtheorem{dfnt}[thm]{Definition}
\def\({\left(}
\def\){\right)}
\newlength{\dhatheight}
\newcommand{\set}[2]{{
    \left.\left\{
        {#1}
    \,\right|\,
        {#2}
    \right\}
}}
\def \<{{\langle}}
\def \>{{\rangle}}
\numberwithin{equation}{section}
\title[MRY presentations]{Twisted toroidal Lie algebras and Moody-Rao-Yokonuma presentation}
\author{Fulin Chen$^1$}
\address{School of Mathematical Sciences, Xiamen University,
 Xiamen, China 361005} \email{chenf@xmu.edu.cn }
\thanks{$^1$Partially supported by NSF of China (No.11501478).}
\author{Naihuan Jing$^2$}
\address{Department of Mathematics, North Carolina State University, Raleigh, NC 27695,
USA}
\email{jing@math.ncsu.edu}
\thanks{$^2$Partially supported by NSF of China (No.11531004) and Simons Foundation (No.198129).}
\author{Fei Kong$^3$}
\address{College of Mathematics and Statistics, Hunan Normal University, Changsha, China 410006} \email{kfkfkfc@outlook.com}
\thanks{$^3$Partially supported by NSF of China (No.11701183).}
 \author{Shaobin Tan$^4$}
 \address{School of Mathematical Sciences, Xiamen University,
 Xiamen, China 361005} \email{tans@xmu.edu.cn}
 \thanks{$^4$Partially supported by NSF of China (No.11471268, No.11531004).}
\subjclass[2010]{17B67} \keywords{Moody-Rao-Yokonuma presentation,
loop algebra, universal central extension, extended affine Lie algebra}
\begin{document}

\begin{abstract} Let $\fg$ be an affine Kac-Moody algebra,
and $\mu$ a diagram automorphism of $\fg$.
 In this paper,  we give an explicit realization for the universal central extension $\wh\fg[\mu]$ of the twisted loop algebra of $\fg$ related to $\mu$, which
 provides a Moody-Rao-Yokonuma presentation for the algebra $\wh\fg[\mu]$
 when $\mu$ is non-transitive, and the presentation is indeed related to the quantization of toroidal Lie algebras.
\end{abstract}
\maketitle

\section{Introduction}

Let $\fg$ be a (twisted or untwisted) affine Kac-Moody algebra (without derivation),
and $\bar\fg$ the quotient algebra of $\fg$ modulo its center.
When $\fg$ is of untwisted type, the universal central extension $\wh\fg$
of the loop algebra $\C[t_1,t_1^{-1}]\ot\bar\fg$ is called a
{\it toroidal Lie algebra}.
This algebra was first introduced by Moody-Rao-Yokonuma in \cite{MRY}, where the authors introduced the famous
Moody-Rao-Yokonuma (MRY) presentation.  %which to study the vertex representation of the central extension,
%which is now called the Moody-Rao-Yokonuma (MRY) presentation.
The presentation makes it more effective to %The MRY presentation makes it easy to
study representations of toroidal Lie algebras in a manner similar
to that of untwisted affine Lie algebras \cite{MRY, Tan1, Tan2, FJW, JM-fermionic-real-toro,JMX, C,JMT}.
Moreover, it turns out that the classical limit of the quantum toroidal algebra
is just the MRY presentation of the toroidal Lie algebra (\cite{GKV,J-KM}).

%In this paper, we study the structure theory of
%the twisted toroidal Lie algebra.
%To be more precise,
Let $\mu$ be a diagram automorphism of $\fg$ of order $N$,
and $\bar\mu$  the automorphism on $\bar\fg$ induced from $\mu$. The twisted loop algebra $\CL(\bar\fg,\bar\mu)$ of $\bar\fg $ is defined as follows
\begin{align*}
  \CL(\bar\fg,\bar\mu)=\bigoplus_{n\in\Z}\C t_1^n\ot\bar\fg_{(n)},
\end{align*}
where $\bar\fg_{(n)}=\set{x\in\bar\fg}{\bar\mu(x)=\xi^nx}$ and $\xi=e^{2\pi\sqrt{-1}/N}$.
In this paper, we study the
 universal central
extension $\wh\fg[\mu]$  of $\CL(\bar\fg,\bar\mu)$, and give Moody-Rao-Yokonuma presentation for $\wh\fg[\mu]$ when $\mu$ is non-transitive. One may expect that the MRY presentation could be used to study the representation and quantization for the twisted toroidal extended affine Lie algebras (\cite{MRY, GKV,J-KM, BL}).

An \emph{extended affine Lie algebra} (EALA)
%We remark that
%these twisted loop algebras $\CL(\bar\fg,\bar\mu)$
%play an important role in the classification of nullity $2$ extended affine Lie algebras (EALAs).
 is a complex Lie algebra,
together with a non-zero finite dimensional Cartan subalgebra and a non-degenerate invariant symmetric bilinear form, that satisfies a list of natural axioms (\cite{H-KT,AABGP, N2}). The root system of an EALA is a disjoint union of isotropic and non-isotropic root systems, and the rank of the free abelian group generated by the isotropic root system is defined to be the {\it nullity} of the EALA (\cite{AABGP}).
%One of the axioms for EALAs requires that the dimension of the radical of
%the bilinear form is finite,
%This semi-positive bilinear form has a finite dimensional radical,
%and the nullity of the EALA is defined to be the dimension.
%The nullity of an EALA is an invariant
%non-negative integer capturing its intrinsic property.
 Indeed, the nullity 0 EALAs are finite dimensional simple Lie algebras over the complex number field, and the
nullity $1$ EALAs are precisely the affine Kac-Moody
algebras (\cite{ABGP}). We remark that the  nullity 2 EALAs are the most important class of EALAs other than the finite dimensional simple Lie algebras and affine Kac-Moody algebras, which are closely related to the singularity theory studied by Saito and Slodowy (\cite{Saito-EALA, Slo}). And the nullity 2 EALAs are classified in \cite{ABP} (also see \cite{GP-torsors}).
%The general theory
%of EALAs, introduced by H{\o}egh-Krohn and Torresani \cite{H-KT}, has been intensively studied for over twenty-five years.
%In view of this, the toroidal Lie algebra $\wh\fg$
%is the core of a nullity $2$ EALA,
%where the core of an EALA is the subalgebra generated by non-isotropic root vectors.

For a given EALA $\mathfrak L$, the subalgebra of $\mathfrak L$ generated by the set of non-isotropic root vectors is called the {\it core} of $\mathfrak L$ (\cite{AABGP}).
We denote by $\mathbb{E}_2$ the class of all Lie algebras that are isomorphic
to the {\it centerless cores} (cores modulo their centers) of EALAs with nullity $2$.
%$\fg$ be an affine Kac-Moody algebra (without derivation) and
%$\mu$ be a diagram automorphism of $\fg$ with order $N$,
%%Let $\bar\fg$ be the quotient algebra of $\fg$ modulo its center,
%let $\bar\mu$ be
%the automorphism of $\bar\fg$ induced by $\mu$, and let
% $\CL(\bar\fg,\bar\mu)$ be the
% %loop algebra of $\bar\fg$ relative to $\bar\mu$, i.e.,
%%$\CL(\bar\fg,\bar\mu)$ is the
%following subalgebra of $\C[t_1,t_1^{-1}]\ot \bar\fg$:
%\begin{align*}
%  \CL(\bar\fg,\bar\mu)=\bigoplus_{n\in\Z}t_1^n\ot\bar\fg_{(n)},
%\end{align*}
%where $\bar\fg_{(n)}=\set{x\in\bar\fg}{\bar\mu(x)=\xi^nx}$ and $\xi=e^{2\pi\sqrt{-1}/N}$.
Let $\mathfrak{sl}_n(\C_q)$ ($n\ge 2$) be the special linear Lie algebra over
the quantum torus $\C_q$ in two variables (\cite{BGK}).
It was proved in \cite{ABP} that
any Lie algebra in $\mathbb{E}_2$ is either isomorphic to $\mathfrak{sl}_n(\C_q)$ with $q\in \C^{\times}$ not a root of unity,
or isomorphic to a Lie algebra of the form $\CL(\bar\fg,\bar\mu)$ with $\mu$ non-transitive. The universal central extension $\wh{\mathfrak{sl}}_n(\C_q)$ of $\mathfrak{sl}_n(\C_q)$ is given in \cite{BGK}, and its MRY presentation is obtained in \cite{VV-double-loop}. The purpose of this paper is to study the universal central extension $\wh\fg[\mu]$ of $\CL(\bar\fg,\bar\mu)$, and the MRY presentation for $\wh\fg[\mu]$ with  non-transitive diagram automorphism $\mu$.

This paper is organized as follows.
In Section 2, we recall some facts for the affine Kac-Moody algebras which will be used later on.
In Section 3, we show that any diagram automorphism $\mu$ of an affine Kac-Moody algebra $\fg$ can be lifted to an automorphism
$\wh\mu$ for the universal central extension $\wh\fg$ of $\CL(\fg,{\rm{id}})$.
The Lie subalgebra of $\wh\fg$ fixed by $\wh\mu$ is denoted by $\wh\fg[\mu]$.
We claim that $\wh\fg[\mu]$ is the universal central extension of $\CL(\bar\fg,\bar\mu)$
(Theorem \ref{main1}), and state the MRY presentation for $\wh\fg[\mu]$
with $\mu$ non-transitive (Theorem \ref{main2}).
Section 4 and Section 5 are devoted to the proofs of Theorems \ref{main1}
and \ref{main2}.

We denote the sets of non-zero complex numbers, non-zero integers, and positive integers respectively by
 $\C^\times$, $\Z^\times$ and $\Z_+$. For  $M\in \Z_+$, we set $\xi_M=e^{2\pi\sqrt{-1}/M}$ and $\Z_M=\Z/M\Z$.

\section{Diagram automorphisms of affine Kac-Moody algebras}

\subsection{Affine Kac-Moody algebras}

In this subsection, we collect some basics about affine Kac-Moody algebras
that will be used later on.

Let $A=(a_{ij})_{i,j=0}^\ell$ be a {\it generalized Cartan matrix} (GCM) of affine type, and $\fg$ the affine Kac-Moody algebra (without derivation) associated to the GCM $A$.
We denote the set $\{0,1,\dots,\ell\}$ by $I$.
By definition, the Lie algebra $\fg$ is generated by the Chevalley generators
$\al_i^\vee, e_i^\pm, i\in I$ with the defining relations $(i,j\in I)$
\begin{align*}
[\al_i^\vee,\al_j^\vee]=0,\ [\al_i^\vee, e_j^\pm]=\pm a_{ij}\, e_j^\pm,\
[e_i^+,e_j^-]=\delta_{ij}\al_i^\vee,\ \mathrm{ad}(e_i^\pm)^{1-a_{ij}}(e_j^\pm)=0\ (i\ne j).
\end{align*}

%
%
%Let $A=(a_{ij})_{i,j\in I}$ be a generalized affine Cartan matrix of type $X_\wp^{(r)}$, where $I=\{0,1,\cdots,\ell\}$, $\wp\ge 1$ and $r=1,2,3$.
%One may consult \cite[Chap 4]{Kac-book} for the explicit explanation of the notation $X_\wp^{(r)}$.
%Let $\fg$ be the derived subalgebra of the affine Kac-Moody algebra associated to $A$,
%also simply called an affine Kac-Moody algebra. By definition, $\fg$ is the Lie algebra generated by
% the Chevalley generators $\al_i^\vee,\, e_i^\pm$, $i\in I$ and subject to the relations ($i,j\in I$)
% \begin{align*}
%[\al_i^\vee,\al_j^\vee]=0,\ [\al_i^\vee, e_j^\pm]=\pm a_{ij}\, e_j^\pm,\
%[e_i^+,e_j^-]=\delta_{ij}\al_i^\vee,\ \mathrm{ad}(e_i^\pm)^{1-a_{ij}}(e_j^\pm)=0\ (i\ne j).\end{align*}

Let $\Delta$ be the root system (including $0$) of $\fg$, $\Delta^{\times}$
 the set of
real  roots in $\Delta$, and $\Delta^0=\Delta\setminus \Delta^\times=\Z\delta_2$ the set of imaginary roots in $\Delta$.
Then $\fg$ has a root space decomposition $\fg=\oplus_{\al\Delta}\fg_\al$.
Let $\Pi=\{\al_i, i\in I\}$ be the simple root system of $\fg$ such that $e_i^\pm\in \fg_{\pm\al_i}$ for $i\in I$, and
$Q=\oplus_{i\in I}\Z\al_i$ the root lattice of $\fg$.
Then the root space decomposition naturally induces a $Q$-grading on $\fg$.
In addition, let $\bar\fg$ be the quotient algebra of $\fg$ modulo its center.
Then the $Q$-grading on $\fg$ naturally
induces a $Q$-grading $\bar\fg=\oplus_{\al\in Q}\bar\fg_\al$ on $\bar\fg$.

Now we recall the twisted loop realization of the affine Kac-Moody algebra $\fg$ (see \cite[Chapters 7 and 8]{Kac-book}).
Using the notations given in \cite[Chapter 4. Table Aff 1-3]{Kac-book}, we assume that the GCM $A$ is of type $X_n^{(r)}$.

We start with a finite dimensional simple Lie algebra $\dg$ of type $X_n$.
Let $\dot\al_i^\vee,\dot E_i^\pm$ $(i=1,2,\dots,n)$ be the Chevalley generators
of $\dg$,
and $\dfh=\oplus_{i=1}^n\C\dot\al_i^\vee$ a Cartan subalgebra of $\dg$.
We denote by $\dot\Delta$ the root system (containing $0$) of $\dg$ with respect to $\dfh$.
Then $\dg$ has a root space decomposition $\dg=\oplus_{\dot\al\in\dot\Delta}\dg_{\dot\al}$ such that $\dg_0=\dfh$.
Let $\dot\Pi$ be a fixed simple root system of $\dot\Delta$,
and $\dot\Delta_+$ the set of positive roots with respect to $\dot\Pi$.
In addition, for each $\dot\al\in \dot\Delta_+$, there exist $\dot E_{\dot\al}^\pm\in\dg_{\pm\dot\al}$ and $\dot\al^\vee\in\dfh$, such that
$\{\dot E_{\dot\al}^+,\dot\al^\vee,\dot E_{\dot\al}^-\}$
form a $\mathfrak{sl}_2$ triple.
Moreover, for a simple root $\dot\al_i\in\dot \Pi$, we assume that
$\dot E_{\dot\al_i}^\pm=\dot E_i^\pm$.

Let $\dot\nu$ be a diagram automorphism of $\dg$ of order $r$.
By definition, there exists a permutation $\dot\nu$ on the set $\{1,2,\dots,n\}$, such that
\begin{align*}
  \dot\nu(\dot E_i^\pm)=\dot E_{\dot\nu(i)}^\pm\quad\te{and}\quad
  \dot\nu(\dot\al_i^\vee)=\dot\al_{\dot\nu(i)}^\vee\quad\te{for }
  i=1,2,\dots,n.
\end{align*}
For each $x\in\dg$ and $m\in\Z$, we set
\begin{align*}
x_{[m]}=r^{-1}\sum_{p\in \Z_r}\xi_r^{-mp}\dot{\nu}^p(x)\quad\te{and}\quad
\dg_{[m]}=\{x_{[m]}\mid x\in \dg\}.
\end{align*}
And define the Lie algebra
\begin{align*}
\te{Aff}(\dg,\dot\nu)=\bigoplus_{m\in\Z}\C t_2^m\ot\dg_{[m]}\oplus\C\rk_2
\end{align*}
with the Lie bracket given by
\begin{align*}
[t_2^{m_1}\ot x+a_1\rk_2, t_2^{m_2}\ot y+a_2\rk_2]
=t_2^{m_1+m_2}\ot [x,y]+\<x,y\>\delta_{m_1+m_2,0}m_1\rk_2,
\end{align*}
where $m_1,m_2\in \Z$, $x\in \dg_{[m_1]},y\in \dg_{[m_2]}$, $a_1,a_2\in \C$ and
$\<\cdot,\cdot\>$ is the normalized symmetric invariant bilinear form on $\dg$.

We denote by
  \begin{equation*}\dot\theta=\begin{cases}\te{the highest root of}\ \dg,\ &\te{if}\ r=1\ \te{or}\ X_n=A_{2\ell}, r=2;\\
  \dot{\al}_1+\cdots+\dot{\al}_\ell,\ &\te{if}\ X_n=D_{\ell+1}, r=2,3;\\
  \dot{\al}_1+\cdots+\dot{\al}_{2\ell-2}, &\te{if}\ X_n=A_{2\ell-1}, r=2;\\
  \dot{\al}_1+2\dot{\al}_2+2\dot{\al}_3+\dot{\al}_4+\dot{\al}_5+\dot{\al}_6,&\te{if}\ X_n=E_6, r=2.
   \end{cases}\end{equation*}
And for each $i=1,2,\dots,n$, we let $r_i$ be
the cardinality of the set $\{\dot\nu^k(i)\,|\,k\in\Z_r\}$.
If the GCM $A$ is of type $A_{2\ell}^{(2)}$, we set
\begin{align*}E_i^\pm=r_i \dE_{i[0]}^\pm,\  E_\ell^\pm=\dE_{\dot{\theta}[1]}^\mp,\ E_0^\pm=2\sqrt{2} \dE_{\ell[0]}^\pm,\
 \ H_i=r_i \dot{\al}_{i[0]}^\vee,\ H_\ell=-\dot{\theta}^\vee,\ H_0=4\dot\al_{\ell[0]}^\vee, \end{align*}
where $i=1,\cdots,\ell-1$.
  Otherwise, we set
\begin{align}\label{eq:def-Es}
E_i^\pm=r_i \dE_{i[0]}^\pm,\ H_i=r_i \dot{\al}_{i[0]}^\vee,\ E_0^\pm=r\dE_{\dot{\theta}[1]}^\mp,\ H_0=-r\dot{\theta}^\vee_{[0]},\ i=1,\cdots,\ell.
\end{align}
It was proved in \cite[Theorem 8.3]{Kac-book} that
we could identify $\fg$ with $\te{Aff}(\dg,\dot\nu)$ by
\begin{align}\label{identification}
\al_\epsilon^\vee=ra_0^{-1}\rk_2+1\ot H_0,\ e_\epsilon^\pm&=t^{\pm 1} \ot E_\epsilon^\pm,\ \al_i^\vee=1\ot H_i, e_i^\pm=1\ot E_i^\pm,\ i\ne \epsilon,
\end{align}
where  $\epsilon=0, a_0=1$ expect that the GCM $A$ is of type $A_{2\ell}^{(2)}$, in which case $\epsilon=\ell, a_0=2$.
From now on, we will often use the following identifications
\[ \fg=\mathrm{Aff}(\dg,\dot\nu)\quad\te{and}\quad \bar\fg=\bigoplus_{m\in \Z}\C t_2^m\ot \dg_{[m]}\]
without further explanation.

Let $\dot Q=\oplus_{i=1}^n \dot\al_i$ be the root lattice of $\dg$.
Note that $\dot\nu$ induces an automorphism of $\dot{Q}$ such that $\dot{\nu}(\dot{\al}_i)=\dot{\al}_{\dot\nu(i)}$ for
$i=1,2,\dots,n$. Set
\[ \dQ_{[0]}=\{\dot\al_{[0]}=r^{-1}\sum_{p\in \Z_r}\dot{\nu}^p(\dot\al)\mid \dot\al\in \dot{Q}\}\subset \dfh^*.\]
Then the root lattice $Q$ of $\fg$ is equivalent to $\dQ_{[0]}\oplus \Z\delta_2$ and the simple root system $\Pi$ of $\fg$ is equivalent to
\[\{\al_\epsilon=-\dot{\theta}_{[0]}+\delta_2,\ \al_{\ell-\epsilon}=\dal_{\ell[0]},\ \al_i=\dot\al_{i[0]},\ i\ne \epsilon, \ell-\epsilon\}.\]

We extend the normalized bilinear form  $\<\cdot,\cdot\>$  on $\dg$ to a symmetric invariant bilinear form on $\fg$
by letting
\begin{equation*}\begin{split}\label{norform}
\<t_2^{m_1}\ot x+a_1\rk_2, t_2^{m_2}\ot y+a_2\rk_2\>
=\ \ &\delta_{m_1+m_2,0}\,\<x,y\>,
\end{split}\end{equation*}
where $m_1,m_2\in \Z$, $x\in \dg_{[m_1]}$, $y\in \dg_{[m_2]}$ and $a_1,a_2\in \C$.
%Notice that the restriction of $\<\cdot,\cdot\>$ on $\dfh$ is non-degenerate.
%Then we may identify $\dfh^\ast$ with $\dfh$ via $\<\cdot,\cdot\>$,
%and get a non-degenerate symmetric bilinear form $(\cdot,\cdot)$ on $\dfh^*$.
Since the restriction of $\<\cdot,\cdot\>$ on $\dfh$ is non-degenerate,
we get a non-degenerate bilinear form $(\cdot,\cdot)$ on $\dfh^*$ by duality.
In addition, the bilinear form $(\cdot,\cdot)$ can be extend to a symmetric bilinear form on $Q$ by letting
\begin{align}\label{defbi}(\al+m\delta_2, \beta+n\delta_2)=(\al, \beta),
\end{align}
where $\al,\beta\in \dot{Q}_{[0]}$ and $m,n\in \Z$.

\subsection{Diagram automorphisms}
Throughout this paper, we
let $\mu$ be a permutation of $I$ with order $N$ such that $a_{ij}=a_{\mu(i)\mu(j)}$ for $i,j\in I$.
It is known that $\mu$ induces a {\it diagram automorphism} $\mu$ of $\fg$ such that
\begin{align}\label{defmu}\mu(\al_i^\vee)=\al_{\mu(i)}^\vee,\quad
\mu(e_i^\pm)=e_{\mu(i)}^\pm,\quad i\in I.
\end{align}
This subsection is devoted to an explicit description of the action of $\mu$ on $\fg$.

It is immediate to see that the permutation $\mu$ induces an automorphism
of $Q$ such that $\mu(\delta_2)=\delta_2$.
Recall from \cite[Proposition 8.3]{Kac-book} that
the finite dimensional simple Lie algebra $\dg$ can be generated by the elements
$E_i^+$, $i\in I$ defined in \eqref{eq:def-Es}.
%Recall that the permutation $\mu$ induces an automorphism of
%$Q$ such that $\mu(\delta_2)=\delta_2$, and that
% the elements $E_i^+, i\in I$ generate the Lie algebra $\dg$ (\cite[Proposition 8.3]{Kac-book}).
 Then we have that
\begin{lemt}\label{lem:dmu} (a) The action
\begin{align}\label{defdmu} E^+_i\mapsto E^+_{\mu(i)},\quad i\in I,\end{align}
defines (uniquely) an automorphism $\dot\mu$ of $\dg$.

(b) The Cartan subalgebra $\dfh$ of $\dg$ is stable under $\dot\mu$,
and
\begin{align}\label{dmudfh} \dot\mu(\dot\nu(h))=\dot\nu(\dmu(h)),\quad \forall\ h\in \dfh.
\end{align}

(c) There is a homomorphism $\rho_\mu:\dQ\rightarrow \Z$ of abelian groups such that
\begin{align}\label{rhomu} \rho_\mu(\dot\nu(\dot\al))=\rho_\mu(\dot\al),\quad \mu(\dot\al_{[0]})=
\dot\mu(\dot\al)_{[0]}+\rho_\mu(\dot\al)\delta_2,\quad \dot\al\in \dQ.
\end{align}

(d) For  $\dot\al\in \dot\Delta$, $x\in \dg_{\dot\al}$ and $m\in \Z$, we have
 that
\begin{align}\label{dmurhomu}
\dot\mu(x_{[m]})=\dot\mu(x)_{[m+\rho_\mu(\dot\al)]}.
\end{align}
\end{lemt}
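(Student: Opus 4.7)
The strategy is to use the identification $\fg = \mathrm{Aff}(\dg, \dot\nu)$ and the explicit form \eqref{identification} of the Chevalley generators, translating the action of $\mu$ on $\fg$ into data $(\dot\mu, \rho_\mu)$ intrinsic to $\dg$.

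For part (a), via \eqref{identification} each $e_i^\pm$ equals $t_2^{k_i^\pm} \otimes E_i^\pm$ with $k_i^\pm$ nonzero precisely when $i = \epsilon$. Since $\{E_i^+ : i \in I\}$ generates $\dg$ by \cite[Proposition 8.3]{Kac-book}, to show that $E_i^+ \mapsto E_{\mu(i)}^+$ extends to a Lie homomorphism $\dot\mu$ of $\dg$, it suffices to prove: if a Lie polynomial $\Phi$ (homogeneous in the number of occurrences of $E_\epsilon^+$) satisfies $\Phi(E_i^+) = 0$ in $\dg$, then $\Phi(E_{\mu(i)}^+) = 0$. A key preliminary is the vanishing $\langle E_i^+, E_j^+ \rangle = 0$ for all $i,j\in I$, which follows from the $\dot\nu$-invariance and weight-nondegeneracy of the bilinear form and forces all central contributions in iterated brackets of the $e_i^+$'s to vanish. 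Consequently $\Phi(e_i^+) = t_2^K \otimes \Phi(E_i^+)$ for an appropriate $K \in \Z$; applying the automorphism $\mu$ then gives $\Phi(e_{\mu(i)}^+) = 0$ and hence $\Phi(E_{\mu(i)}^+) = 0$. Applying the same construction with $\mu^{-1}$ supplies the two-sided inverse needed for the automorphism property.

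Part (b) rests on the identity $H_i = [E_i^+, E_i^-]$, a direct computation from \eqref{eq:def-Es} and the bracket in $\mathrm{Aff}(\dg, \dot\nu)$. Combined with an analogous lift-and-project argument applied to the negative generators $e_i^-$ to deduce $\dot\mu(E_i^-) = E_{\mu(i)}^-$ (up to normalization), one obtains $\dot\mu(H_i) = H_{\mu(i)}$, yielding stability of $\dfh^{\dot\nu} = \sum_i \C H_i$ under $\dot\mu$; stability on the remaining eigenspaces $\dfh \cap \dg_{[m]}$ for $m\ne 0$ then follows from the decomposition of $\dfh$ under $\dot\nu$. The commutation $\dot\mu \circ \dot\nu = \dot\nu \circ \dot\mu$ on $\dfh$ reduces, on the generating set $\{\dot\al_i^\vee\}$, to the commutativity of two permutations of $\{1, \dots, n\}$, a fact stemming from the compatibility between the Dynkin diagrams of $\fg$ and $\dg$.

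For part (c), I define $\rho_\mu(\dot\al)$ as the $\delta_2$-coefficient of $\mu(\dot\al_{[0]})$ when decomposed in $Q = \dQ_{[0]} \oplus \Z\delta_2$. $\Z$-linearity inherits from that of $\mu$ on $Q$, $\dot\nu$-invariance is immediate from $\dot\al_{[0]} = \dot\nu(\dot\al)_{[0]}$, and formula \eqref{rhomu} is the consistency statement that the $\dQ_{[0]}$-component of $\mu(\dot\al_{[0]})$ equals $\dot\mu(\dot\al)_{[0]}$, which holds once $\dot\mu$ on $\dot Q$ is identified with the permutation of $\dot\Delta$ induced by its action on root spaces from (a)--(b). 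Finally, part (d) is a direct computation: expanding $\dot\mu(x_{[m]}) = r^{-1} \sum_p \xi_r^{-mp} \dot\mu(\dot\nu^p(x))$ and comparing with $\dot\mu(x)_{[m + \rho_\mu(\dot\al)]}$, the identity \eqref{dmurhomu} reduces to the twisted commutation $\dot\mu \circ \dot\nu(x) = \xi_r^{-\rho_\mu(\dot\al)} \dot\nu \circ \dot\mu(x)$ on each root space $\dg_{\dot\al}$, which is established by comparing $\mu$'s action on the one-dimensional real root space $\fg_{\dot\al_{[0]} + m\delta_2}$ (spanned by $t_2^m \otimes x_{[m]}$) with its image in $\fg_{\dot\mu(\dot\al)_{[0]} + (m + \rho_\mu(\dot\al))\delta_2}$. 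The main obstacle is expected to lie in part (a), specifically the careful bookkeeping to ensure that the lift of $\Phi$ to $\fg$ has no unaccounted central contributions, particularly in the $A_{2\ell}^{(2)}$ case where the normalizations in \eqref{eq:def-Es} involve non-trivial scalars such as $2\sqrt{2}$.
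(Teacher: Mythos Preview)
Your approach differs substantially from the paper's. The paper does not attempt a uniform argument: it splits into the untwisted case $\dot\nu=\mathrm{id}$ (where $Q=\dot Q\oplus\Z\delta_2$ and one simply reads off $\dot\mu$ and $\rho_\mu$ from the decomposition $\mu(\dot\al)=\dot\mu(\dot\al)+\rho_\mu(\dot\al)\delta_2$), and the twisted case $\dot\nu\ne\mathrm{id}$, where the only nontrivial possibilities are $A_{2\ell-1}^{(2)}$ with $\mu=(0,1)$ and $D_{\ell+1}^{(2)}$ with $\mu=\prod_i(i,\ell-i)$. In those two cases the paper writes down $\dot\mu$ explicitly on the Chevalley generators $\dot E_i^\pm$ of $\dg$ (using an auxiliary simple system containing $-\dot\theta$ and $-\dot\nu(\dot\theta)$), writes down $\rho_\mu$ explicitly on $\dot\Pi$, and verifies (a)--(d) by direct inspection, with (d) done by induction on root height. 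Your lift-and-project strategy is more conceptual and your argument for (a) is essentially sound; the advantage of the paper's approach is that once $\dot\mu$ is given explicitly on $\dot E_i^\pm$, parts (b)--(d) become routine checks rather than structural arguments.

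There are, however, genuine gaps in your proposal for (b) and (d) that the paper's explicit construction sidesteps. In (b), from $\dot\mu(H_i)=H_{\mu(i)}$ you only get stability of $\dfh_{[0]}=\sum_i\C H_i$; the claim that stability of $\dfh_{[m]}$ for $m\ne 0$ ``follows from the decomposition of $\dfh$ under $\dot\nu$'' is not justified (one needs an extra input such as $\dfh=Z_{\dg}(\dfh_{[0]})$). More seriously, your reduction of the commutation \eqref{dmudfh} to ``commutativity of two permutations of $\{1,\dots,n\}$'' is incorrect in the twisted case: $\dot\mu$ does \emph{not} permute the simple coroots $\dot\al_i^\vee$. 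For instance in $A_{2\ell-1}^{(2)}$ the paper's $\dot\mu$ sends $\dot E_1^\pm\mapsto -\dot E_{\dot\nu(\dot\theta)}^\mp$, so $\dot\mu(\dot\al_1^\vee)=-\dot\nu(\dot\theta)^\vee$ is not a simple coroot at all. In (d), your reduction to the twisted commutation $\dot\mu\dot\nu(x)=\xi_r^{-\rho_\mu(\dot\al)}\dot\nu\dot\mu(x)$ is correct, but the proposed justification ``by comparing $\mu$'s action on the one-dimensional real root space $\fg_{\dot\al_{[0]}+m\delta_2}$'' only tells you the \emph{target} root space; it does not identify the image with $t_2^{m+\rho_\mu(\dot\al)}\otimes\dot\mu(x_{[m]})$, because the link between $\mu$ on $\fg$ and your abstractly defined $\dot\mu$ on $\dg$ has only been established, via the lift of Lie polynomials in $e_j^+$, on the positive part $\fn_+\subset\fg$, not on arbitrary real root spaces.
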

\begin{proof}
We first consider the case $\dot\nu={\rm{id}}$. For each $\dal\in \dQ$, write
\bee\label{decmu}\mu(\dal)=\dot{\mu}(\dal)+\rho_\mu(\dal)\delta_2\quad \text{with}\quad \dot{\mu}(\dal)\in \dQ.\eee
Then the map
\[\dot{\mu}:\dQ\rightarrow \dQ,\quad \dal\mapsto \dot{\mu}(\dal)\] is an automorphism of $\dQ$ (with order $N$)
and the map
\[\rho_\mu:\dQ\rightarrow \Z,\quad \dal\mapsto \rho_\mu(\dal)\]
 is a homomorphism of abelian groups.
We define a linear  map $\dot\mu$ on $\dg$ as follows
\[\dot\mu: \dg\rightarrow \dg,\quad \dot{E}_{\dal}^\pm\mapsto \dot\mu(\dot{E}_{\dal}^\pm),\ \ \dal^\vee\mapsto \dot\mu(\dal^\vee),
\,\,\te{for }\dal\in\dot\Delta_+,
\] where $\dot\mu(\dot{E}_{\dal}^\pm)$ are the elements in $\dg_{\pm\dot\mu(\dal)}$ determined by the following equation
\[\mu(1\ot \dot{E}_{\dal}^\pm)=t_2^{\rho_\mu(\pm\dal)}\ot \dot\mu(\dot{E}_{\dal}^\pm).\]
 It is easy to see that $\dot\mu$ is an automorphism of $\dg$ (with order $N$).
Moreover, one can check that the automorphism $\dot\mu$ and the homomorphism $\rho_\mu$ defined above
satisfy all the assertions in the lemma.

Next we consider the case that $\dot\nu\ne {\rm{id}}$. If $\mu={\rm{id}}$, then we only need to take $\dot\mu={\rm{id}}$ and $\rho_\mu=0$.
So we assume further that $\mu$ is nontrivial. Then either
$
X_{n}^{(r)}=A_{2\ell-1}^{(2)},\ \mu=(0,1),
$
or
$
X_{n}^{(r)}=D_{\ell+1}^{(2)},\ \mu=\prod_{0\le i\le \lfloor \frac{\ell-1}{2}\rfloor} (i,l-i).
$
Observe that, if $X_{n}^{(r)}=A_{2\ell-1}^{(2)}$ (resp. $D_{\ell+1}^{(2)}$), then the set
 $\{-\dot\nu(\dot{\theta}), \dal_2,\cdots,\dal_{2\ell-2}, -\dot{\theta}\}$ (resp. $\{\al_{\ell-1},\dal_{\ell-2},\cdots,\dal_{1}, -\dot{\theta}, -\dot{\nu}(\dot{\theta})\}$) is another simple root system of $\dg$.
 Thus, if $X_{n}^{(r)}=A_{2\ell-1}^{(2)}$, then there is an automorphism $\dot\mu$ on $\dg$ given by
\[\dE_1^\pm\mapsto -\dE_{\dot\nu(\dot{\theta})}^\mp,\ \dE_i^\pm\mapsto \dE_i^\pm,\ 2\le i\le 2\ell-2,\  \dE_{2\ell-1}^\pm\mapsto \dE_{\dot{\theta}}^\mp.\]
And if $X_{n}^{(r)}=D_{\ell+1}^{(2)}$, then there is an automorphism $\dot\mu$ on $\dg$ given by
\[\dE_i^{\pm}\mapsto \dE_{\ell-i}^\pm,\ 1\le i\le \ell-1,\ \dE_{\ell}^\pm\mapsto \dE_{\dot{\theta}}^{\mp},\
\dE_{\ell+1}^\pm\mapsto -\dE_{\dot{\nu}(\dot{\theta})}^{\mp}.\]
It is straightforward to check that in both cases the automorphism $\dot{\mu}$ defined above satisfies the properties \eqref{defdmu} and \eqref{dmudfh}.
 This proves the assertions (a) and (b).

For the assertion (c), we define a homomorphism $\rho_\mu:\dot{Q}\rightarrow \Z$  by letting
\begin{align*}\rho_\mu(\dot{\al}_1)=1=\rho_\mu(\dot{\al}_{2\ell-1}),\quad \rho_\mu(\dot{\al}_i)=0,\ 2\le i\le 2\ell-2,\ \te{if}\  X_{n}^{(r)}=A_{2\ell-1}^{(2)},\\
\rho_\mu(\dot{\al}_1)=0,\ 1\le i\le \ell-1,\ \rho_\mu(\dot\al_{\ell})=1=\rho_\mu(\dot\al_{\ell+1}),\ \te{if}\ X_{n}^{(r)}=D_{\ell+1}^{(2)}.
\end{align*}
It is obvious that the property \eqref{rhomu} holds true for all $\dal_i\in \dot{\Pi}$ and hence for all $\dal\in \dQ$.
Finally, it can be checked case by case that, the property \eqref{dmurhomu} holds true for every $x=\dE_i^\pm$, $i=1,2,\dots,n$.
For the general case, we may assume that $\dal=\dal_{i_1}+\cdots+\dal_{i_s}$ and $x=[\dE^+_{i_1},\cdots, [\dE^+_{i_{s-1}}, \dE^+_{i_s}]]$ for some $i_1,\cdots,i_s\in \dot{I}$.
Then
\begin{align*} \dmu(x)&=\dmu(\sum_{k_1,\cdots,k_s\in \Z_r}[\dE^+_{i_1[k_1]},\cdots, [\dE^+_{i_{s-1}[k_{s-1}]}, \dE^+_{i_s[k_s]}]])\\
&=\sum_{k_1,\cdots,k_s\in \Z_r}[\dmu(\dE_{i_1})_{[k_1+\rho_\mu(\dal_{i_1})]},\cdots,
[\dmu(\dE_{i_{s-1}})_{[k_{s-1}+\rho_\mu(\dal_{i_{s-1}})]},\dmu(\dE_{i_s})_{[k_s+\rho_\mu(\dal_{i_s})]}]].
\end{align*}
It implies that
\begin{align*}
& \dmu(x)_{[m+\rho_\mu(\dal)]}\\
=& \sum_{k_1+\cdots+k_s=m}[\dmu(\dE_{i_1})_{[k_1+\rho_\mu(\dal_{i_1})]},\cdots,
[\dmu(\dE_{i_{s-1}})_{(k_{s-1}+\rho_\mu(\dal_{i_{s-1}}))},\dmu(\dE_{i_s})_{(k_s+\rho_\mu(\dal_{i_s}))}]]\\
=&\dmu(\sum_{k_1+\cdots+k_s=m}[\dE^+_{i_1[k_1]},\cdots, [\dE^+_{i_{s-1}[k_{s-1}]}, \dE^+_{i_s[k_s]}]])
=\dmu(x_{[m]}),
\end{align*}
holds true for every $m\in \Z_r$.
This completes the proof of assertion (d).
\end{proof}

Let $\dot\mu$ and $\rho_\mu$ be as in Lemma \ref{lem:dmu}.
Since the bilinear form $\<\cdot,\cdot\>$ is non-degenerated on $\dfh$,
we may and do identify $\dfh$ with its dual space $\dfh^\ast$,
and extend $\rho_\mu$ to a linear functional on $\dfh$ by $\C$-linearity.
%Here we identify $\dfh$ with $\dfh^*$ via the non-degenerated bilinear form $\<\cdot,\cdot\>$ on $\dg$.
 %Note that one can conclude from \eqref{rhomu} that
%\begin{align}\label{rhomuproperty}
%\rho_\mu(h)=\rho_\mu(h_{[0]}),\quad \rho_\mu(\sum_{p\in \Z_N}\dot\mu^p(h))=0,\quad h\in \dfh.
%\end{align}
The following result is an explicit description of the action of the diagram automorphism $\mu$.

\begin{prpt}\label{charmu} For each $m\in \Z$,  $\dot\al\in \dot{\Delta}\setminus\{0\}$,  $x\in \dg_{\dal}$ and $h\in \dfh$,
we have that
\begin{equation}\begin{split}\label{actmu}
&\mu(t_2^m\ot x_{[m]})= t_2^{m+\rho_\mu(\dot\al)}\ot \dot\mu(x_{[m]}),\quad \mu(\rk_2)= \rk_2,\\
&\mu(t_2^m\ot h_{[m]})= t_2^m\ot \dot\mu(h_{[m]})+\delta_{m,0}\, \rho_\mu(h)\, \rk_2.
\end{split}\end{equation}
\end{prpt}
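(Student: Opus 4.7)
My plan is to define a linear endomorphism $\mu'$ of $\fg$ by the right-hand sides of \eqref{actmu} and then argue that $\mu' = \mu$. Concretely, extend the given formulas linearly in $x,h$ to obtain a well-defined map $\mu'$ on $\C t_2^m \ot \dg_{[m]}$ for each $m$ (and on $\C\rk_2$), using the identification $\dg_{[m]} \cong \dg$ via the projector $x \mapsto x_{[m]}$ and Lemma \ref{lem:dmu}(d) to see that $\dot\mu$ really does carry $\dg_{[m]}$ into $\dg_{[m+\rho_\mu(\dal)]}$ when restricted to the appropriate root-space components. Well-definedness at the level of sums of root vectors follows from the $\dot\nu$-invariance \eqref{dmudfh} of $\dot\mu$, which ensures the formula respects the eigenspace decomposition used to define $x_{[m]}$.

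Next I would verify that $\mu'$ is a Lie algebra homomorphism. Applying $\mu'$ to the defining bracket
\baa
[t_2^{m_1}\ot x_{[m_1]}+a_1\rk_2,\, t_2^{m_2}\ot y_{[m_2]}+a_2\rk_2]
=t_2^{m_1+m_2}\ot [x_{[m_1]},y_{[m_2]}]+\<x_{[m_1]},y_{[m_2]}\>\delta_{m_1+m_2,0}m_1\rk_2,
\eaa
and comparing with $[\mu'(a_1),\mu'(a_2)]$, the noncentral part reduces to checking $\dot\mu([x_{[m_1]},y_{[m_2]}]) = [\dot\mu(x_{[m_1]}), \dot\mu(y_{[m_2]})]$ together with the grading shift $\rho_\mu(\dot\al) + \rho_\mu(\dot\beta) = \rho_\mu(\dot\al+\dot\beta)$; both are built into Lemma \ref{lem:dmu}. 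The central part requires matching two contributions when $m_1+\rho_\mu(\dal) = -(m_2+\rho_\mu(\dbe))$: one from the bracket of loop pieces, one from the $\rho_\mu(h)\rk_2$ piece in the Cartan formula. Their equality uses $\dot\mu$-invariance of $\<\cdot,\cdot\>$ and the identity $m_1 = (m_1 + \rho_\mu(\dal))$ on the nose precisely when one relies on the fact, coming from the construction of $\rho_\mu$, that $\<\dot\mu(x),\dot\mu(y)\> = \<x,y\>$ and that $\rho_\mu$ contributes the correct shift dual to the bilinear pairing. I expect this central-term bookkeeping to be the main obstacle: tracking how $\rho_\mu(h)$ arises from applying $\mu'$ to the Cartan part of a bracket $[t_2^{m}\ot x_{[m]},t_2^{-m}\ot y_{[-m]}]$ demands careful use of the relation \eqref{decmu} between $\mu$, $\dot\mu$, and $\rho_\mu$.

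Having established that $\mu'$ is an algebra endomorphism of $\fg$, I would then check $\mu' = \mu$ on the Chevalley generators $\al_i^\vee, e_i^\pm$ using the identification \eqref{identification}. For $i\neq\epsilon$ the generators are of the form $1\ot H_i$ and $1\ot E_i^\pm$, and applying $\mu'$ gives $1\ot\dot\mu(H_{i[0]}) + \rho_\mu(H_i)\rk_2$ and $t_2^{\rho_\mu(\pm\dal_i)}\ot\dot\mu(E_i^\pm)_{[\rho_\mu(\pm\dal_i)]}$ respectively; the explicit values of $\rho_\mu(\dal_i)$ from the construction in Lemma \ref{lem:dmu}(c), together with the defining identity \eqref{defmu}, collapse these to $\al_{\mu(i)}^\vee$ and $e_{\mu(i)}^\pm$. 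The case $i=\epsilon$ involves the $t_2^{\pm 1}$ factor together with $H_0=-r\dot\theta^\vee_{[0]}$ (or $H_\ell=-\dot\theta^\vee$ in type $A_{2\ell}^{(2)}$), and here the central shift $\rho_\mu(H_0)\rk_2$ is precisely what reconciles $\mu'(\al_\epsilon^\vee)$ with $\al_{\mu(\epsilon)}^\vee$ under the identification $\al_\epsilon^\vee = ra_0^{-1}\rk_2 + 1\ot H_0$.

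Finally, since $\fg$ is generated as a Lie algebra by the Chevalley generators and both $\mu$ and $\mu'$ are Lie algebra homomorphisms agreeing on this generating set, $\mu=\mu'$ on all of $\fg$, establishing \eqref{actmu}. The formula $\mu(\rk_2)=\rk_2$ can either be read off from this generation argument (expressing $\rk_2$ via $\sum_i a_i^\vee \al_i^\vee$ with $\mu$-invariant Kac labels $a_i^\vee$) or be verified independently by noting that $\rk_2$ spans the one-dimensional center and $\mu$ acts as the identity on it because $\mu$ preserves the normalization coming from the centralizer of the Cartan.
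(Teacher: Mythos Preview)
Your approach is correct and is essentially the same as the paper's: define a candidate map $\mu'$ by the formulas in \eqref{actmu}, verify (using Lemma \ref{lem:dmu} and the identification \eqref{identification}) that it is a Lie algebra automorphism of $\fg$ agreeing with $\mu$ on the Chevalley generators, and conclude $\mu'=\mu$ by generation. The paper compresses all of this into a single sentence (``one can check''), whereas you have correctly isolated the central-term bookkeeping --- namely that the extra $\rho_\mu(h)\rk_2$ contribution from the Cartan formula exactly compensates the shift $m_1\mapsto m_1+\rho_\mu(\dot\al)$ in the cocycle, via the identity $\rho_\mu([x_{[m_1]},y_{[-m_1]}])=\<x_{[m_1]},y_{[-m_1]}\>\,\rho_\mu(\dot\al)$ and the $\dot\mu$-invariance of $\<\cdot,\cdot\>$ --- as the only nontrivial verification.
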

\begin{proof} Using Lemma \ref{lem:dmu} and the identification \eqref{identification},
one can check that the action given in  \eqref{actmu} defines an automorphism of $\fg$
such that the equation \eqref{defmu} holds, as desired.
\end{proof}

\section{The Lie algebra $\wh\fg[\mu]$ and its MRY presentation}
In this section, we define the twisted toroidal Lie algebra $\wh\fg[\mu]$ and state its Moody-Rao-Yokonuma presentation.
\subsection{The Lie algebra $\wh\fg[\mu]$}
In this subsection, we introduce the definition of the Lie algebra
$\wh\fg[\mu]$.

For $M_1,M_2\in \Z_+$, let $\CK_{M_1,M_2}$ be the $\C$-vector space
spanned by the symbols
\begin{align*}
  t_1^{m_1}t_2^{m_2}\rk_1,\quad t_1^{m_1}t_2^{m_2}\rk_2,\quad m_1\in M_1\Z,\ m_2\in M_2\Z,
\end{align*}
subject to the relation
\begin{align*}
  m_1t_1^{m_1}t_2^{m_2}\rk_1+m_2t_1^{m_1}t_2^{m_2}\rk_2=0.
\end{align*}
We define
\begin{align*}
 \wh\fg= \bigoplus_{m,n\in\Z}\C t_1^mt_2^n\ot\dg_{[n]}\oplus \CK_{1,r}\subset \(\C[t_1^{\pm 1}, t_2^{\pm 1}]\ot \dg\)\oplus \CK_{1,r}
\end{align*}
to be a Lie algebra with Lie bracket given by
\begin{equation}\begin{split}\label{toroidalre}
 &[t_1^{m_1} t_2^{m_2}\ot x, t_1^{n_1} t_2^{n_2}\ot y]\\
 =\ &t_1^{m_1+n_1} t_2^{m_2+n_2}\ot [x,y]+\<x,y\>\, (\sum_{i=1}^2 m_it_1^{m_1+n_1} t_2^{m_2+n_2}\rk_i),
 \end{split}\end{equation}
where $x\in \dg_{[m_2]}$, $y\in \dg_{[n_2]}$, $m_1, m_2,n_1,n_2\in \Z$ and $\CK_{1,r}$ is the center space.
It follows from  \cite{MRY,Sun-uce} that the projective map
\[\psi:\wh\fg\rightarrow \bigoplus_{m,n\in\Z} \C t_1^mt_2^n\ot\dg_{[n]}=\C[t_1,t_1^{-1}]\ot \bar\fg\]
is the universal central extension of the loop algebra $\CL(\bar\fg,{\rm{id}})$ of $\bar\fg$.

For convenience, we view $\C[t_1,t_1^{-1}]\ot \fg$ as a subspace of
$\wh\fg$ in the following way
\begin{equation*}
t_1^{m_1}\ot x= t_1^{m_1}t_2^{m_2}\ot \dot x+at_1^{m_1}\rk_2,
\end{equation*}
for $x=t_2^{m_2}\ot \dot x+a\rk_2\in \fg,\ m_1\in \Z$.
Then it is easy to see that  the Lie algebra $\wh\fg$ is spanned by the elements
\[t_1^{m_1}\ot x, \quad \rk_1,\quad \ t_1^{n_1}t_2^{n_2}\rk_1,\quad x\in \fg,\ m_1,n_1\in \Z,\ n_2\in r\Z^\times.\]
Moreover, the commutator relations among these elements are as follows:
\begin{lemt}\label{lem:commutator} Let $\al, \beta\in \Delta$, $x\in \fg_\al, y\in \fg_\beta$ and $m_1,n_1\in \Z$.
If $\al+\beta\in \Delta^\times\cup \{0\}$, then
\begin{align}\label{commutator1}
[t_1^{m_1}\ot x, t_1^{n_1}\ot y]=t_1^{m_1+n_1}\ot [x,y]+m_1\delta_{m_1,n_1}\<x,y\>\rk_1.\end{align}
If $x=t_2^{m_2}\ot \dot{x}$, $y=t_2^{n_2}\ot \dot{y}$ and $\al+\beta\in \Delta^0\setminus\{0\}$, then
\begin{align}
[t_1^{m_1}\ot x, t_1^{n_1}\ot y]=t_1^{m_1+n_1}\ot [x,y]
+\<\dot{x},\dot{y}\>\frac{m_1n_2-m_2n_1}{m_2+n_2}t_1^{m_1+n_1}t_2^{m_2+n_2}\rk_1.
\end{align}
\end{lemt}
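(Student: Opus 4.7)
The plan is to compute both brackets by direct expansion using the defining Lie bracket \eqref{toroidalre} on $\wh\fg$ and then translating back via the identification $t_1^{m}\ot x = t_1^{m}t_2^{k}\ot \dot x + a t_1^{m}\rk_2$ for $x = t_2^{k}\ot\dot x + a\rk_2 \in \fg$.  Since both statements concern root vectors with nonzero weights $\al,\beta$ (the $\rk_2$-component of $\fg$ sits in $\fg_0$), I would write $x = t_2^{m_2}\ot \dot x$ with $\dot x\in\dg_{[m_2]}$ and $\al = \dot\al_{[0]}+m_2\delta_2$, and similarly $y = t_2^{n_2}\ot\dot y$, so that under the identification $t_1^{m_1}\ot x = t_1^{m_1}t_2^{m_2}\ot\dot x$ inside $\wh\fg$.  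Applying \eqref{toroidalre} then produces
\bee
  [t_1^{m_1}\ot x,\, t_1^{n_1}\ot y] = t_1^{m_1+n_1}t_2^{m_2+n_2}\ot [\dot x,\dot y] + \<\dot x,\dot y\>\bigl(m_1 t_1^{m_1+n_1}t_2^{m_2+n_2}\rk_1 + m_2 t_1^{m_1+n_1}t_2^{m_2+n_2}\rk_2\bigr),
\eee
and the lemma reduces to a bookkeeping comparison of this expression with $t_1^{m_1+n_1}\ot [x,y]$ in the two cases.

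In the first case ($\al+\beta\in \Delta^\times\cup\{0\}$), if $\al+\beta$ is a nonzero real root, then $\dot\al+\dot\beta$ is a nonzero root of $\dg$, so $\<\dot x,\dot y\>=0$ and all central contributions disappear, matching \eqref{commutator1}.  If instead $\al+\beta=0$, then $m_2+n_2=0$, and the bracket $[x,y]$ in $\fg$ already carries the piece $m_2\<\dot x,\dot y\>\rk_2$ which absorbs the $\rk_2$-term above; the surviving contribution $m_1\<\dot x,\dot y\>t_1^{m_1+n_1}\rk_1$ is then collapsed by the $\CK_{1,r}$-relation $m_1 t_1^{m_1+n_1}t_2^0\rk_1 + 0\cdot t_1^{m_1+n_1}t_2^0\rk_2 = 0$, which forces $t_1^{p}\rk_1=0$ for $p\ne 0$.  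This leaves exactly $m_1\,\delta_{m_1+n_1,0}\<x,y\>\rk_1$, using $\<x,y\>_\fg=\<\dot x,\dot y\>_\dg$ in this degree, as claimed.

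In the second case ($\al+\beta\in\Delta^0\setminus\{0\}$), we have $m_2+n_2\ne 0$, so $[x,y]_\fg = t_2^{m_2+n_2}\ot[\dot x,\dot y]$ contributes no $\rk_2$-piece.  Applying the $\CK_{1,r}$-relation $(m_1+n_1)t_1^{m_1+n_1}t_2^{m_2+n_2}\rk_1 + (m_2+n_2)t_1^{m_1+n_1}t_2^{m_2+n_2}\rk_2 = 0$ to eliminate $\rk_2$ in favor of $\rk_1$ and collecting coefficients produces precisely $\tfrac{m_1n_2-m_2n_1}{m_2+n_2}t_1^{m_1+n_1}t_2^{m_2+n_2}\rk_1$, as asserted.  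The only real pitfall throughout is the careful accounting of how the two parallel descriptions of $\wh\fg$ (via $\fg=\mathrm{Aff}(\dg,\dot\nu)$ and via bi-graded $t_1,t_2$ expansions) interact under the central relation in $\CK_{1,r}$; once that translation is fixed, both formulas follow by straightforward substitution.
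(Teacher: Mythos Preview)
The paper states this lemma without proof, treating it as a routine verification; your direct computation via the defining bracket \eqref{toroidalre} together with the relation in $\CK_{1,r}$ is exactly that verification, and it is correct. One small remark: the statement as printed has $\delta_{m_1,n_1}$, which is evidently a typo for $\delta_{m_1+n_1,0}$ (equivalently $\delta_{m_1,-n_1}$); your argument correctly produces the latter. Also, your opening clause that ``both statements concern root vectors with nonzero weights $\al,\beta$'' is slightly stronger than what the lemma assumes, but the case $\al=0$ (or $\beta=0$) is harmless since the $\rk_2$-component of $\fg_0$ is central in $\wh\fg$ and lies in the radical of $\<\cdot,\cdot\>$, so it contributes nothing to either side.
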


Observe that the Lie algebra $\wh\fg$ is generated by the elements
\begin{align}\label{genhatg}
t_1^m\ot e_i^\pm,\quad t_1^m\ot \al_i^\vee,\quad \rk_1,\quad i\in I,\ m\in \Z.\end{align}
Similar to \eqref{defmu}, the permutation $\mu$ induces an automorphism of $\wh\fg$ as follows.
\begin{lemt}\label{lem:wh-mu-N-period}
The action
\begin{equation}\label{defhatmu}
t_1^m\ot e_i^\pm\mapsto \xi^{-m} t_1^m\ot e_{\mu(i)}^\pm,\quad t_1^m\ot \al_i^\vee\mapsto \xi^{-m}t_1^m\ot \al_{\mu(i)}^\vee,\quad
\rk_1\mapsto \rk_1
\end{equation}
for $i\in I,\ m\in \Z$, defines  an automorphism $\wh\mu$ of $\wh\fg$.
 \end{lemt}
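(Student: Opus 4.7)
The plan is to build $\wh\mu$ as the unique lift of a natural automorphism of the loop algebra $\CL(\bar\fg,\te{id})$ to its universal central extension $\wh\fg$, and then to verify the generator formulas in \eqref{defhatmu} by a weight-and-bracket argument.

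First I would check that $\mu$ descends to $\bar\fg$. The center of $\fg$ is the one-dimensional span of the canonical central element $K=\sum_{i\in I}a_i^\vee\al_i^\vee$; because $\mu$ is a symmetry of the affine Dynkin diagram, the dual Kac labels satisfy $a_{\mu(i)}^\vee=a_i^\vee$, whence $\mu(K)=K$ and $\mu$ induces an automorphism $\bar\mu$ of $\bar\fg$. I would then define
\[\sigma:\CL(\bar\fg,\te{id})\to\CL(\bar\fg,\te{id}),\qquad t_1^m\ot\bar x\mapsto \xi^{-m}t_1^m\ot\bar\mu(\bar x),\]
which is clearly a Lie algebra automorphism satisfying $\sigma^N=\te{id}$.

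Since $\CL(\bar\fg,\te{id})$ is perfect and $\psi:\wh\fg\to\CL(\bar\fg,\te{id})$ is its universal central extension (as stated after \eqref{toroidalre}), there is a unique Lie algebra endomorphism $\wh\mu$ of $\wh\fg$ lifting $\sigma$, i.e.\ $\psi\circ\wh\mu=\sigma\circ\psi$. Applying the same uniqueness to $\sigma^N=\te{id}$ forces $\wh\mu^N=\te{id}_{\wh\fg}$, so $\wh\mu$ is in fact an automorphism of order dividing $N$.

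It remains to verify \eqref{defhatmu} on each generator. The elements $t_1^m\ot e_i^\pm$ lie in $Q$-weight $\pm\al_i\ne 0$, while the central subspace $\CK_{1,r}$ sits in weight zero; hence $\psi$ is injective on the non-zero weight spaces, and $\wh\mu(t_1^m\ot e_i^\pm)$ is forced to be $\xi^{-m}t_1^m\ot e_{\mu(i)}^\pm$. For the zero-weight generators I would apply $\wh\mu$ to the commutator identities from Lemma \ref{lem:commutator}: from $[t_1^m\ot e_i^+,1\ot e_i^-]=t_1^m\ot\al_i^\vee$ (valid for $m\ne 0$) and $[e_i^+,e_i^-]=\al_i^\vee$ one reads off $\wh\mu(t_1^m\ot\al_i^\vee)=\xi^{-m}t_1^m\ot\al_{\mu(i)}^\vee$ for every $m$; and applying $\wh\mu$ to the identity $[t_1\ot e_i^+,t_1^{-1}\ot e_i^-]=\al_i^\vee+\<e_i^+,e_i^-\>\rk_1$ one solves $\wh\mu(\rk_1)=\rk_1$, using the $\mu$-invariance $\<e_{\mu(i)}^+,e_{\mu(i)}^-\>=\<e_i^+,e_i^-\>$ of the normalized bilinear form.

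The main obstacle is this zero-weight verification, where central corrections are a priori possible; pinning them down reduces to the bracket computation, and the identity $\wh\mu(\rk_1)=\rk_1$ hinges on the $\mu$-invariance of $\<\cdot,\cdot\>$ — the same invariance that underlies the existence of the lift in the first place.
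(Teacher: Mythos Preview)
Your proof is correct and takes a genuinely different route from the paper. The paper constructs $\wh\mu$ explicitly: using the description of $\mu$ from Proposition~\ref{charmu} (in particular the homomorphism $\rho_\mu$), it writes down a linear map on a spanning set of $\wh\fg$, verifies by hand via Lemma~\ref{lem:commutator} that this map is a Lie automorphism, and then checks it agrees with \eqref{defhatmu} on generators. You instead define $\wh\mu$ abstractly as the functorial lift of $\xi^{-\rd_1}\ot\bar\mu$ through the universal central extension and deduce the generator formulas from weight considerations and a single bracket computation. Your approach is cleaner and sidesteps the $\rho_\mu$-bookkeeping entirely; the paper's approach has the compensating advantage of producing explicit formulas for $\wh\mu$ on the imaginary-root spaces and on all of $\CK_{1,r}$. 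The paper does eventually identify its $\wh\mu$ with $\uce(\xi^{-\rd_1}\ot\bar\mu)$ in Section~\ref{sec:proof-uce}, so the two constructions converge.

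One small inaccuracy: the central subspace $\CK_{1,r}$ does not sit entirely in $Q$-weight zero; the elements $t_1^{n_1}t_2^{n_2}\rk_1$ with $n_2\ne 0$ carry $Q$-weight $n_2\delta_2$. What matters for your argument is that these weights are all imaginary, hence disjoint from the real simple roots $\pm\al_i$, so $\psi$ is still injective on the relevant weight spaces and the conclusion stands.
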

 \begin{proof}
We define a linear transformation $\wh\mu$ on $\wh\fg$ by letting
 \begin{align*}
 t_1^{m_1}\ot x &\mapsto \xi^{-m_1} t_1^{m_1}\ot \mu(x),\\
 t_1^{m_1}\ot h &\mapsto  \xi^{-m_1}(t_1^{m_1}\ot \mu(h)
-\frac{m_1}{m_2}\rho_\mu(\dot{h})t_1^{m_1}t_2^{m_2}\rk_1),\\
\rk_1\mapsto &\rk_1,\quad t_1^{n_1}t_2^{n_2}\rk_1 \mapsto  \xi^{-n_1}t_1^{n_1}t_2^{n_2}\rk_1,
 \end{align*}
where $m_1,n_1\in \Z$, $x\in \fg_\al, \al\in \Delta^\times\cup\{0\}$,
$h=t_2^{m_2}\ot \dot{h}$, $m_2\in \Z^\times$, $n_2\in r\Z^\times$ and $\dot{h}\in \dfh_{[m_2]}$.
Note that $\rho_\mu(\dot{h})\ne 0$ only if $m_2\in r\Z$, and so $\wh\mu$ is well-defined.

By using the explicit action of $\mu$ given in Proposition \ref{charmu} and
 the commutate relations of $\wh\fg$ given in Lemma \ref{lem:commutator}, one can easily verify that
 the map $\wh\mu$ is an automorphism of $\wh\fg$.
 Moreover, it is obvious that the actions of $\wh\mu$ on those generators  in \eqref{genhatg}  coincide with that in \eqref{defhatmu}.
 This completes the proof.
 \end{proof}

We define $\wh\fg[\mu]$ to be the subalgebra of $\wh\fg$ fixed by  $\wh\mu$.
Recall from Introduction that $\bar\mu$ is the automorphism of $\bar\fg$ induced from $\mu$, and that
$\CL(\bar\fg,\bar\mu)$ is the twisted loop algebra of $\bar\fg$ related to $\bar\mu$.
Note that  $\CL(\bar\fg,\bar\mu)$ is the subalgebra of $\CL(\bar\fg,{\rm{id}})$ fixed by the automorphism
\[\xi^{-\rd_1}\ot \bar\mu:\CL(\bar\fg,{\rm{id}})\rightarrow \CL(\bar\fg,{\rm{id}}),\quad t_1^m\ot x\mapsto \xi^{-m} t_1^m\ot \bar\mu(x),\ m\in \Z, x\in \bar\fg.\]
It follows from \eqref{defhatmu} that
\begin{align}\label{hatmucover}\psi\circ\wh{\mu}=(\xi^{-\rd_1}\ot \bar\mu)\circ \psi.\end{align}
Thus, by taking the restriction of $\psi$ on $\wh\fg[\mu]$, one gets a
Lie algebra homomorphism
 \[\psi_\mu=\psi|_{\wh\fg[\mu]}:\wh\fg[\mu]\rightarrow \CL(\bar\fg,\bar\mu).\]
The following theorem is the first main result of this paper, whose proof
 will be presented in Section \ref{sec:proof-uce}.
\begin{thm}\label{main1} The Lie algebra homomorphism  $\psi_\mu:\wh\fg[\mu]\rightarrow \CL(\bar\fg,\bar\mu)$
is the universal central extension of the twisted loop algebra $\CL(\bar\fg,\bar\mu)$.
\end{thm}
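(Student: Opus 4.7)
The plan is to prove Theorem \ref{main1} in three parts: verifying that $\psi_\mu$ is a central extension, showing that $\wh\fg[\mu]$ is perfect, and establishing the universal factorization property.

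\textbf{Step 1 (central extension).} First I would check surjectivity of $\psi_\mu$. Given $z \in \CL(\bar\fg,\bar\mu) \subseteq \CL(\bar\fg,\text{id})$, pick any preimage $\wh z \in \wh\fg$ using surjectivity of $\psi$, and form the $\wh\mu$-average
\begin{align*}
\wh z_\mu = \frac{1}{N}\sum_{k=0}^{N-1}\wh\mu^{k}(\wh z).
\end{align*}
Then $\wh z_\mu \in \wh\fg[\mu]$; applying \eqref{hatmucover} together with the $(\xi^{-\rd_1}\ot\bar\mu)$-invariance of $z$ gives $\psi(\wh z_\mu) = z$. Next, $\ker\psi_\mu = \ker\psi \cap \wh\fg[\mu] \subseteq \CK_{1,r}$, which is central in all of $\wh\fg$ and hence central in $\wh\fg[\mu]$. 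Thus $\psi_\mu$ is a central extension.

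\textbf{Step 2 (perfectness of $\wh\fg[\mu]$).} Since $\bar\fg$ is perfect and $\bar\mu$ is an automorphism of finite order, $\CL(\bar\fg,\bar\mu)$ is perfect: the average of a bracket $[x,y]$ of homogeneous elements with respect to $\bar\mu$ produces $\bar\mu$-invariant elements, and these span the twisted loop algebra. Combined with Step 1 this gives
\begin{align*}
\wh\fg[\mu] = [\wh\fg[\mu],\wh\fg[\mu]] + \ker\psi_\mu,
\end{align*}
so it suffices to express every element of $\CK_{1,r}^{\wh\mu}$ as a bracket in $\wh\fg[\mu]$. Using the commutator formulas in Lemma \ref{lem:commutator}, brackets of the form $[t_1^{m_1}\ot x,t_1^{n_1}\ot y]$ with matching $t_2$-degrees already produce the central generators $t_1^{m_1+n_1}t_2^{m_2+n_2}\rk_1$, and averaging these brackets over $\langle\wh\mu\rangle$ yields $\wh\mu$-invariant commutators that span $\CK_{1,r}^{\wh\mu}$.

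\textbf{Step 3 (universal property).} Let $\pi: E \to \CL(\bar\fg,\bar\mu)$ be an arbitrary central extension; we need a Lie homomorphism $f : \wh\fg[\mu] \to E$ with $\pi \circ f = \psi_\mu$. Uniqueness follows from Step 2. For existence, my plan is to build an auxiliary central extension $\tilde\pi : \tilde E \to \CL(\bar\fg,\text{id})$ carrying an automorphism $\tilde\tau$ which lifts $\xi^{-\rd_1}\ot\bar\mu$ and whose fixed-point subalgebra is $E$ with $\tilde\pi|_E = \pi$. Concretely, one sets
\begin{align*}
\tilde E = \bigoplus_{k=0}^{N-1} t_1^k\cdot E
\end{align*}
regarded as an induction of $E$ along the Galois-like extension $\C[t_1^{\pm N}]\subset \C[t_1^{\pm 1}]$, with the Lie bracket and $\tilde\tau$-action defined by transport of structure. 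The universal property of $\psi$ applied to $\tilde\pi$ then produces a unique lift $\tilde f: \wh\fg \to \tilde E$. Because $\tilde f$ and $\tilde\tau\inverse \circ \tilde f \circ \wh\mu$ are both lifts of $\tilde\pi$ along $\psi$, uniqueness of the lift forces $\tilde f$ to intertwine $\wh\mu$ with $\tilde\tau$, so that $\tilde f$ restricts to the desired map $f : \wh\fg[\mu] \to \tilde E^{\tilde\tau} = E$.

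\textbf{Main obstacle.} The principal technical hurdle is the construction of the auxiliary extension $\tilde E \to \CL(\bar\fg,\text{id})$ in Step 3 and verifying that the induced bracket satisfies the Jacobi identity while the $\Z_N$-action on $\tilde E$ genuinely covers $\xi^{-\rd_1}\ot\bar\mu$. This requires carefully descending the cocycle of $E$ along the Galois covering $\CL(\bar\fg,\text{id})/\CL(\bar\fg,\bar\mu)$ and checking compatibility with the explicit central corrections appearing in the formula for $\wh\mu$ in Lemma \ref{lem:wh-mu-N-period}; once this is in place, Steps 1 and 2 package the rest of the argument through standard universal-central-extension formalism.
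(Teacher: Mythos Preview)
Your route is quite different from the paper's. The paper does not verify the universal property of $\psi_\mu$ directly; instead it identifies $\wh\mu$ with $\uce(\xi^{-\rd_1}\ot\bar\mu)$, realizes $\CL(\bar\fg,\bar\mu)$ as a multiloop algebra $\CL_{M_1,M_2}(\dg,\dot\tau,\dot\nu\dot\rho)$ via the Allison--Berman--Pianzola classification (Lemma \ref{iso1}), invokes Sun's explicit universal central extension of multiloop algebras, and then matches this with $\wh\fg[\mu]$ through a chain of fixed-point comparisons (Lemmas \ref{lem:period-uce}--\ref{lem:iso-last}) to conclude that $\wh\fg[\mu]$ is centrally closed. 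Your direct descent argument, if it could be completed, would be considerably more self-contained.

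There is, however, a genuine gap in Step~3, precisely at the point you flag as the main obstacle. The proposed induction $\tilde E=\bigoplus_{k}t_1^k\cdot E$ becomes a Lie algebra by base change only if $E$ is already a Lie algebra over $R=\C[t_1^{\pm N}]$, so that tensoring with $\C[t_1^{\pm 1}]$ over $R$ makes sense; an arbitrary $\C$-central extension $E\to\CL(\bar\fg,\bar\mu)$ carries no such $R$-structure. If instead you try to define the bracket on $\tilde E$ by fixing a linear section $s:\CL(\bar\fg,\bar\mu)\to E$, then the Jacobi identity for a triple of degrees $j_1,j_2,j_3\ne 0$ with $j_1+j_2+j_3\equiv 0\pmod N$ reduces to $[s(x),e]_E=s([x,\pi(e)])$, which is exactly the $R$-linearity you do not have. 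Making this descent work is not formal: it is essentially equivalent to showing that $\uce(\fk)^G$ is centrally closed in this specific situation, and that is what the paper establishes through the multiloop realization rather than by any induction construction. There is also a smaller issue in Step~2: the average $N^{-1}\sum_k\wh\mu^k[a,b]=N^{-1}\sum_k[\wh\mu^k a,\wh\mu^k b]$ is a sum of commutators whose arguments lie in the various $\wh\mu$-eigenspaces, not in $\wh\fg[\mu]$, so it does not exhibit a $\wh\mu$-fixed central element as a member of $[\wh\fg[\mu],\wh\fg[\mu]]$; perfectness of $\wh\fg[\mu]$ needs a direct calculation with the generators \eqref{genwhfgmu}.
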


%
%\begin{lemt} \label{lem:orre}
%      Let $i,j\in I$ and $m,n\in \Z$. Then
%\begin{align}
%\label{orre0}&t_1^m\otimes \al_{\mu(i)(m)}^\vee=\xi^{m}t_1^m \otimes \al_{i(m)}^\vee,\quad t_1^m\otimes  e_{\mu(i)(m)}^\pm=\xi^m t_1^m \otimes e_{i(m)}^\pm,\\
%\label{orre1}&[t_1^m \otimes \al_{i(m)}^\vee, \rk_1]=0=[t_1^m \otimes e_{i(m)}^\pm, \rk_1],\\
%\label{orre2}&[t_1^m \otimes \al_{i(m)}^\vee, t_1^n \otimes \al_{j(n)}^\vee]=\sum_{k\in \Z_N}\frac{mN}{\epsilon_j} a_{i\mu^k(j)}\xi^{km}\delta_{m+n,0}\rk_1,\\
%\label{orre3}&[t_1^{m} \otimes \al_{i(m)}^\vee, t_1^{n} \otimes e_{j(n)}^\pm]=\pm\sum_{k\in \Z_N} a_{i\mu^k(j)}\xi^{km} t_1^{m+n} \otimes e_{j(m+n)}^\pm,\\
%\label{orre4}&[t_1^{m} \otimes e_{i(m)}^+, t_1^n \otimes e_{j(n)}^-]=\sum_{k\in \Z_N}\delta_{i,\mu^k(j)}\xi^{km}(t_1^{m+n}\otimes  \al_{i(m+n)}^\vee
%+\frac{mN}{\epsilon_j}\delta_{m+n,0}\rk_1).\end{align}
%\end{lemt}
%\begin{proof} A direct verification by using the formula \eqref{commutator1}.
%\end{proof}

\subsection{The MRY presentation}
Here we state an MRY presentation for $\wh\fg[\mu]$.
Throughout this subsection, we may always assume that $\mu$ is non-transitive.
Observe  that
a diagram automorphism on $\fg$ is transitive if and only if $\fg$ is of type $A_\ell^{(1)}$ $(\ell\ge 1)$,
and the diagram automorphism is an order $\ell+1$ rotation of the Dynkin diagram.

We first introduce some notations. Set $V=\mathbb R\ot_\Z Q$ and extend $(\cdot,\cdot)$ (see \eqref{defbi}) to a bilinear form on $V$ by $\R$-linearity.
For  $i,j\in I$, we set
\[\check{\al}_i=\frac 1N\sum_{k\in \Z_N}\al_{\mu^k(i)}\quad\te{and}\quad \check{a}_{ij}=2\frac{(\check{\al}_i,\check{\al}_j)}{(\check{\al}_i,\check{\al}_i)}.\]
We fix a representative subset of $I$ as follows
\[ \check{I}=\{ i\in I\,|\,\mu^k(i)\geq i\ \text{for}\ k\in \Z_N\}. \]
It was proved in \cite[Proposition 12.1.10]{ABP} (see also \cite{FSS}) that  the folded matrix
\[\check{A}=\left( \check{a}_{ij}\right)_{i,j\in\check{I}},\quad\]
 of the GCM $A$ associated to $\mu$ is also a GCM of affine type.

For $i\in I$, we denote by $\mathcal{O}(i)\subset I$ the orbit containing $i$ under the action of the group $\la \mu\ra$.
The following result was proved in
\cite[Lemma 12.1.5]{ABP}.
\begin{lemt}\label{lem:defsi}
 For each $i\in I$,  exactly one of the following holds
\begin{enumerate}
\item[(a)] The elements $\alpha_p, p\in \mathcal{O}(i)$ are pairwise orthogonal;
\item[(b)] $\mathcal{O}(i)=\{i,\mu(i)\}$ and
$a_{i\mu(i)}=-1=a_{\mu(i)i}$.
\end{enumerate}
\end{lemt}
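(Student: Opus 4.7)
The plan is to iterate $\mu$ starting from a would-be edge inside $\mathcal{O}(i)$ and exploit the rigidity of affine Dynkin diagrams. Suppose case (a) fails, so there exist distinct $p,q\in\mathcal{O}(i)$ with $a_{pq}\ne 0$. Writing $q=\mu^k(p)$ with $0<k<|\mathcal{O}(i)|$, let $m$ denote the order of $\mu^k$ acting on $p$. Because $\mu$ is a diagram automorphism, $a_{\mu^{jk}(p)\mu^{(j+1)k}(p)}=a_{pq}\ne 0$ for every $j$, so the $\mu^k$-orbit of $p$ supports a closed walk $p\to\mu^k(p)\to\mu^{2k}(p)\to\cdots\to p$ of length $m$ whose consecutive nodes are joined in the Dynkin diagram. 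By the minimality built into the definition of $m$, the walk visits $m$ distinct vertices and is therefore a simple cycle.

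In affine type, the only Dynkin diagram containing a cycle is $A_\ell^{(1)}$, and it carries a unique cycle that spans all of $I$. Thus if $m\ge 3$ we must be in $A_\ell^{(1)}$ with the $\mu^k$-orbit of $p$ equal to $I$; but then $\mu^k$ acts transitively on $I$, forcing $\mu$ itself to be transitive and contradicting the standing non-transitivity hypothesis. Hence $m\le 2$, and since $\mu^k(p)=q\ne p$ excludes $m=1$, we must have $m=2$, i.e., $\mu^{2k}(p)=p$. Consequently $|\mathcal{O}(i)|$ divides $2k$ while strictly exceeding $k$, which forces $|\mathcal{O}(i)|=2k$.

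It remains to show $k=1$. If $k\ge 2$ the orbit would have $2k\ge 4$ elements carrying $k$ pairwise disjoint edges $\mu^j(p)-\mu^{j+k}(p)$ and admit an order-$2k$ automorphism cycling them. I plan to exclude such a configuration by a case-check of the affine Dynkin diagrams: outside of $A_\ell^{(1)}$ the diagrams are trees and the diagram automorphism group is at most $S_4$ (attained only for $D_4^{(1)}$, where the four leaves permuted by an order-$4$ element are all adjacent only to the central node, hence mutually non-adjacent); inside $A_\ell^{(1)}$ every non-transitive rotation by $k_0$ has $\gcd(k_0,\ell+1)>1$, which prevents any two distinct elements of an orbit from being adjacent in the cycle, while every reflection has order two. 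Once $k=1$ is established, $\mathcal{O}(i)=\{i,\mu(i)\}$, and the invariance $a_{i\mu(i)}=a_{\mu(i)\mu^2(i)}=a_{\mu(i)i}$ forces the $2\times 2$ Cartan submatrix on the orbit to be symmetric with non-zero off-diagonal entries; among affine GCMs the only such option not exhibiting the whole algebra as $A_1^{(1)}$ (which is transitive) is $a_{i\mu(i)}=a_{\mu(i)i}=-1$. The case-check in this last step is the main technical hurdle, relying on the smallness of the diagram-automorphism groups of the non-cyclic affine types and on the pointed geometry of $D_4^{(1)}$ where $S_4$-symmetry is available but the four moved nodes are mutually disjoint.
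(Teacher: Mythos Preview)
The paper does not supply its own proof here; the sentence immediately preceding the lemma attributes the result to \cite[Lemma~12.1.5]{ABP}. Your proposal is therefore a self-contained replacement rather than a reconstruction, and the overall strategy is sound. The cycle argument forcing $m\le 2$ (hence $m=2$) is correct, the divisibility step giving $|\mathcal{O}(i)|=2k$ is correct (the only divisor of $2k$ strictly greater than $k$ is $2k$ itself), and your final step --- that the symmetric $2\times 2$ Cartan block with nonzero off-diagonal can only be $\bigl(\begin{smallmatrix}2&-1\\-1&2\end{smallmatrix}\bigr)$, because any proper indecomposable principal submatrix of an affine GCM is of finite type, while the $\bigl(\begin{smallmatrix}2&-2\\-2&2\end{smallmatrix}\bigr)$ possibility would force $|I|=2$ and hence $\mu$ transitive --- is also right.

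Your case-check ruling out $k\ge 2$ has one omission. Besides $D_4^{(1)}$, the types $D_n^{(1)}$ with $n\ge 5$ also carry diagram automorphisms of order $4$: the automorphism group there is dihedral of order $8$, and a generator of its cyclic subgroup permutes the four leaf nodes in a single $4$-cycle while acting with order at most $2$ on the interior chain. You should add this case explicitly, but the conclusion is identical to the one you draw for $D_4^{(1)}$: the four leaves are pairwise non-adjacent (each is joined only to an interior vertex), so no orbit of size $\ge 4$ contains an adjacent pair. With that addition your argument is complete and provides a clean elementary proof in place of the paper's citation.
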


As in \cite{ABP}, for $i\in I$, we set
\begin{equation*}
s_i=\begin{cases}1,\ \text{if (a) holds in Lemma \ref{lem:defsi}};\\
2,\ \text{if (b) holds in Lemma \ref{lem:defsi}}.\end{cases}\end{equation*}
Now we introduce the following definition.
\begin{dfnt}
Define $\mathcal{M}(\fg,\mu)$ to be the Lie algebra generated by the elements
\begin{eqnarray}\label{eq:genset}
  h_{i,m},\quad x_{i,m}^\pm,\quad c,\quad i\in I,\ m\in \mathbb Z,
\end{eqnarray}
and subject to the relations
\begin{eqnarray*}
&&\text{(T0) }h_{\mu (i),m}=\xi^m h_{i,m},\ \ \ \
x^\pm_{\mu (i),m}=\xi^m x^\pm_{i,m},\\
&&\text{(T1) }[c, h_{i,n}]=0=[c, x^{\pm}_{i,n}],\\
&&\text{(T2) }[h_{i,m},h_{j,n}]
=\sum_{k\in \Z_N}mN\<\al_i^\vee,\al_{\mu^k(j)}^\vee\>\delta_{m+n,0}m\xi^{km}c,\\
&&\text{(T3) }[h_{i,m},x^\pm_{j,n}]
=\pm \sum_{k\in \Z_N}a_{i\mu^k(j)} x^\pm_{j,m+n}\xi^{km},\\
&&\text{(T4) }[x^+_{i,m},x^-_{j,n}]
=\sum_{k\in \Z_N}\delta_{i,\mu^k (j)}\left(h_{j,m+n}+\frac{mN\<\al_i^\vee,\al_i^\vee\>}{2}\delta_{m+n,0}c\right)\xi^{km},\\
&&\text{(T5) }\left(\mathrm{ad}\, x_{i,0}^\pm\right)^{1-\check{a}_{ij}}
\left(x_{j,m}^\pm\right)=0,\ \text{if}\ \check{a}_{ij}\le 0,\\
&&\text{(T6) }[x_{i,m_1}^\pm,\cdots,[x^\pm_{i,m_{s_i}}, x_{i,m_{s_i+1}}^\pm]]=0.
\end{eqnarray*}
\end{dfnt}

In view of \eqref{genhatg} and \eqref{defhatmu}, we know that the Lie algebra $\wh\fg[\mu]$ is generated by the following elements
\begin{align} \label{genwhfgmu} t_1^m\ot e_{i(m)}^\pm,\quad t_1^m\ot \al_{i(m)}^\vee,\quad \rk_1,\quad i\in I,\ m\in \Z,
\end{align}
where  $x_{(m)}=\sum_{p\in \Z_N}\xi^{-pm}\mu^p(x)$ for $x\in \fg$.
The following theorem is the second main result of this paper, whose proof will be presented in Section \ref{sec:proof-MRY}.
\begin{thm} \label{main2}
The assignment
\begin{eqnarray*}  c\mapsto \rk_1,\ h_{i,m}\mapsto t_1^m\ot \al_{i(m)}^\vee,\ x_{i,m}^\pm\mapsto t_1^m \ot e^\pm_{i(m)},\ i\in I, m\in \Z
\end{eqnarray*}
 determines a Lie algebra isomorphism from $\mathcal{M}(\fg,\mu)$  to $\wh\fg[\mu]$.
\end{thm}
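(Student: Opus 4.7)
The plan is to establish the claimed isomorphism $\phi\colon \mathcal{M}(\fg,\mu)\to\wh\fg[\mu]$ in three stages: well-definedness, surjectivity, and injectivity, with the last being the serious task.

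First, I would verify that $\phi$ is a well-defined Lie algebra homomorphism by checking that the images satisfy each of (T0)--(T6). Relation (T0) is immediate from the computation $e_{\mu(i)(m)}^\pm = \xi^m e_{i(m)}^\pm$, and likewise for the Cartan generators, by reindexing the orbit sum defining $x_{(m)}$. Relation (T1) holds because $\rk_1$ is central in $\wh\fg$. For (T2)--(T4), expand both sides using Lemma \ref{lem:commutator} together with the $\mu$-invariance of $\langle\cdot,\cdot\rangle$ (lifted via Proposition \ref{charmu}) and collect the orbit-averaged coefficients; the appearance of $N$ and the characters $\xi^{km}$ on the right-hand sides comes from the double-orbit sum $\sum_{p,q}\xi^{(q-p)m}\langle\mu^p(\cdot),\mu^q(\cdot)\rangle$ reducing, by $\mu$-invariance, to $N\sum_k\xi^{km}\langle\cdot,\mu^k(\cdot)\rangle$. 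For the Serre-type relations (T5) and (T6), I would invoke Lemma \ref{lem:defsi}: case (a) gives the folded Serre relation governed by $\check{a}_{ij}$, which reduces to the classical Serre computation in $\fg$ after summing over orbits; case (b) forces the extra constraint (T6) with $s_i=2$, verified by expanding the triple iterated bracket of $e^\pm_{i(m_k)}$'s and using $a_{i\mu(i)}=-1$ together with the original Serre relations in $\fg$.

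Second, surjectivity of $\phi$ is immediate from the generating set \eqref{genwhfgmu} for $\wh\fg[\mu]$, since $\rk_1$, $t_1^m\otimes\al^\vee_{i(m)}$, and $t_1^m\otimes e^\pm_{i(m)}$ are precisely the images of $c$, $h_{i,m}$, and $x^\pm_{i,m}$.

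The crux is injectivity, and my approach is to leverage Theorem \ref{main1}. Consider the composite
\[ \pi \;=\; \psi_\mu \circ \phi \colon \mathcal{M}(\fg,\mu) \longrightarrow \CL(\bar\fg,\bar\mu). \]
The key intermediate claim is that $\pi$ is a central extension with $\ker\pi = \C c$. Granting this, Theorem \ref{main1} produces a Lie algebra homomorphism $\tilde\phi\colon\wh\fg[\mu]\to\mathcal{M}(\fg,\mu)$ lifting $\psi_\mu$; the identities $\phi\circ\tilde\phi=\mathrm{id}$ and $\tilde\phi\circ\phi=\mathrm{id}$ then follow from the surjectivity of $\phi$ together with the perfectness of both algebras, where one verifies that $c$ lies in $[\mathcal{M}(\fg,\mu),\mathcal{M}(\fg,\mu)]$ using (T4) with $i=j$ and $m+n=0$.

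The main obstacle is establishing $\ker\pi=\C c$, which amounts to a Drinfeld-style presentation theorem for the centreless twisted loop algebra $\CL(\bar\fg,\bar\mu)$ modulo the relations obtained by setting $c=0$ in (T0)--(T6). To prove it I would exhibit an explicit spanning set for $\mathcal{M}(\fg,\mu)/\C c$ consisting of ordered loop-monomials indexed by the root spaces of $\CL(\bar\fg,\bar\mu)$ (real and imaginary), then compare with the known root-space decomposition $\CL(\bar\fg,\bar\mu)=\bigoplus_n\C t_1^n\otimes\bar\fg_{(n)}$ to rule out extra relations. The delicate points are, first, recovering the imaginary-root piece of $\CL(\bar\fg,\bar\mu)$ from iterated brackets of the $h_{i,m}$'s modulo $c$ and matching it against the full Cartan of $\CL(\bar\fg,\bar\mu)$, and second, showing in the $s_i=2$ case of (T6) that the extra Serre-type relation is indeed consistent with (in fact implied by) the remaining relations after quotienting by $c$, so that no collapse beyond the expected center occurs.
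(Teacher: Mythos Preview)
Your overall architecture matches the paper's: define $\phi$, check it is a well-defined surjection (the paper's Lemma \ref{firsthom}), then show that the composite $\pi=\psi_\mu\circ\phi$ is a central extension of $\CL(\bar\fg,\bar\mu)$ (the paper's Proposition \ref{cent}), and finally invoke the universal property from Theorem \ref{main1}. That last deduction is exactly as you describe, using perfectness on both sides.

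The genuine gap is your intermediate claim that $\ker\pi=\C c$. This is false: the center of $\wh\fg[\mu]$ (equivalently $\ker\psi_\mu$) is the $\wh\mu$-fixed subspace of $\CK_{1,r}$, which is infinite-dimensional, containing for instance all $t_1^{n_1}t_2^{n_2}\rk_1$ with $n_1\in N\Z$, $n_2\in r\Z^\times$. Consequently $\mathcal{M}(\fg,\mu)/\C c$ is \emph{not} isomorphic to $\CL(\bar\fg,\bar\mu)$, and your proposed method of producing a spanning set of $\mathcal{M}(\fg,\mu)/\C c$ by ordered monomials and matching it against the root-space decomposition of $\CL(\bar\fg,\bar\mu)$ cannot succeed: the imaginary (isotropic) graded pieces of $\mathcal{M}(\fg,\mu)$ are genuinely larger than those of $\CL(\bar\fg,\bar\mu)$, and the surplus does not reduce to $\C c$.

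What you actually need, and what the paper does, is only to show that $\ker\pi$ is \emph{central}, not to compute it. The paper's argument avoids any direct analysis of the isotropic part. It puts a $\wh Q_\mu$-grading on $\mathcal{M}(\fg,\mu)$, shows the subalgebra $\mathcal{M}_0(\fg,\mu)$ generated by $h_{i,0},x_{i,0}^\pm$ is the affine Kac-Moody algebra for $\check A$, and observes that $\mathcal{M}(\fg,\mu)$ is an integrable $\mathcal{M}_0(\fg,\mu)$-module. Integrability plus $\mathfrak{sl}_2$-theory forces each non-isotropic graded piece $\mathcal{M}(\fg,\mu)_{\al,p}$ to have dimension at most $1$ (Lemma \ref{supp}); comparing with Proposition \ref{prop:rootsys1} on the $\wh\fg[\mu]$ side shows $\phi$ is already bijective on non-isotropic pieces, so $\ker\pi\subset\mathcal{M}(\fg,\mu)^{\mathrm{iso}}$. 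Finally, since $\wh Q_\mu^0+\wh Q_\mu^\times\subset\wh Q_\mu^\times$ and $\mathcal{M}(\fg,\mu)$ is generated by the $x_{i,m}^\pm$, the isotropic part of the kernel is automatically central. The point is that one never touches the structure of the isotropic graded pieces directly; your ``delicate points'' about the imaginary-root piece are exactly where a direct approach would founder.
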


When $\fg$ is of untwisted type and $\mu=1$, Theorem \ref{main2} was proved in \cite{MRY}.

\section{Proof of Theorem \ref{main1}}\label{sec:proof-uce}
%This section is devoted to a proof of Theorem  \ref{main1}.
\subsection{Multiloop algebras}
We start by recalling the definition of multiloop algebras (see \cite{ABFP}).
Let $\mathfrak k$ be an arbitrary Lie algebra, and let $\sigma_1$, $\sigma_2,\dots,\sigma_s$ be
 pairwise commuting automorphisms on $\mathfrak k$.
From now on, we denote by
\[\mathfrak k^{\,\sigma_1,\sigma_2,\dots,\sigma_s}\]
the fixed point subalgebra of $\mathfrak k$ under the automorphisms $\sigma_1,\sigma_2,\dots,\sigma_s$.
Suppose further that each automorphism $\sigma_i$ has a finite period $M_i$, i.e. $\sigma^{M_i}=1$, $i=1, \ldots, s$.
%and let $M_i$ be some positive integers such that $\sigma_i^{M_i}=1$ for $i=1,2,\cdots,s$.
The {\it multiloop algebra} associated to $\fk$, $\sigma_1,\sigma_2,\dots,\sigma_s$ is by definition the  following
subalgebra of  $\C[t_1^{\pm 1},t_2^{\pm 1},\dots,t_s^{\pm 1}]\ot \fk $:
\baa \mathcal L_{M_1,M_2,\cdots,M_s}(\mathfrak k,\sigma_1,\sigma_2,\dots,\sigma_s)=\bigoplus_{m_1,m_2,\dots,m_s\in \Z}
\C t_1^{m_1}t_2^{m_2}\cdots t_s^{m_s}\ot \mathfrak k_{(m_1,m_2,\dots,m_s)},\eaa
where
\[\mathfrak k_{(m_1,m_2,\dots,m_s)}=\{x\in \mathfrak k\mid \sigma_i(x)=\xi_{M_i}^{m_i}\ x,\ i=1,2,\dots,s\},\]
%When each $M_i$ is the order of $\sigma_i$,
and when each $M_i$ is the order of $\sigma_i$ we often write
\[\mathcal L(\mathfrak k,\sigma_1,\sigma_2,\dots,\sigma_s)=\mathcal L_{M_1,M_2,\dots,M_r}(\mathfrak k,\sigma_1,\sigma_2,\dots,\sigma_s).\]

Let $\sigma$ be an automorphism of $\mathfrak k$, and $(c_1,c_2,\dots,c_s)$ an $s$-tuple in $(\mathbb C^\times)^s$.
Let %We will use the notation
\[c_1^{-\rd_1}\ot c_2^{-\rd_2}\ot \cdots\ot c_s^{-\rd_s}\ot \sigma\]
be %stand for the following
the automorphism of $\C[t_1^{\pm 1},t_2^{\pm 1},\dots,t_s^{\pm 1}]\ot \fk$ defined by:
\[
t_1^{m_1}t_2^{m_2}\cdots t_s^{m_s}\ot x\mapsto c_1^{-m_1}c_2^{-m_2}\cdots c_s^{-m_s} t_1^{m_1}t_2^{m_2}\cdots t_s^{m_s}\ot \sigma(x),\]
where $x\in \fk$ and $m_i\in \Z$.
It is obvious that the multiloop algebra $\mathcal L_{M_1,\dots,M_s}(\mathfrak k,\sigma_1,\dots,\sigma_s)$
is the  subalgebra of
$\mathbb C[t_1^{\pm1},t_2^{\pm 1},\dots, t_s^{\pm 1}]\ot \fk$
fixed by the  following commuting automorphisms
\[\xi_{M_1}^{-\rd_1}\ot 1\ot \cdots\ot 1\ot \sigma_1,\ 1\ot \xi_{M_2}^{-\rd_2}\ot \cdots\ot 1\ot \sigma_2,\ \dots,\ 1\ot \cdots\ot 1\ot \xi_{M_s}^{-\rd_s}\ot \sigma_s.\]

\subsection{The functor $\uce$}
In this subsection, we recall the endofunctor $\uce$ on the category of Lie algebras introduced in \cite{N-uce-functor}.
Let $\mathfrak k$ be an arbitrary Lie algebra, and $B$ the subspace of $\fk\otimes \fk$ spanned by all elements of
the form
\begin{eqnarray*}
x\otimes y+y\otimes x\ \text{and }
x\otimes [y,z]+y\otimes [z,x]+z\otimes [x,y],\quad x,y,z\in \fk.
\end{eqnarray*}
We  define
\begin{eqnarray*}
\uce(\fk)=\fk\otimes \fk/B
\end{eqnarray*}
to be a Lie algebra with Lie bracket
\begin{eqnarray*}
[x\otimes x',y\otimes y']_{\uce(\fk)}=[x,x']\otimes [y,y']+B.
\end{eqnarray*}
Then we have the following well-defined Lie algebra homomorphism
\begin{eqnarray*}
\mathfrak{u}_{\fk}:\uce(\fk)\longrightarrow [\fk,\fk]\subset \fk,\quad x\otimes y\mapsto [x,y],
\end{eqnarray*}
which is in fact a central extension of $[\fk,\fk]$.

Let $f:\fk\rightarrow \fk_0$ be a homomorphism of Lie algebras. Then the map
\begin{eqnarray*}
\uce(f):&&\uce(\fk)\longrightarrow \uce(\fk_0)\nonumber\\
&&x\otimes y\mapsto f(x)\otimes f(y),
\end{eqnarray*}
is also a Lie algebra homomorphism. Note that $\uce$ is a covariant functor. Therefore, if $f$ is an isomorphism, then so is $\uce(f)$.

 We say that a homomorphism $\hat{f}:\uce(\fk)\rightarrow \uce(\fk_0)$ {\it covers}  $f: \fk\rightarrow \fk_0$ if
 \[\mathfrak u_{\fk_0}\circ \hat{f}=f\circ \mathfrak u_\fk.\]
The following results were proved in \cite{N-uce-functor}.
\begin{prpt}\label{uce} Let $\fk$ be a perfect Lie algebra. Then
\begin{enumerate}
\item[(a)] the map $\mathfrak u_\fk:\uce(\fk)\rightarrow \fk$ is the universal central extension of $\fk$, and $\ker(\mathfrak u_\fk)$ is the center of $\uce(\fk)$
when $\fk$ is centerless;
\item[(b)] for any homomorphism $f:\fk\rightarrow \fk_0$ of Lie algebras, the map $\uce(f)$ is the unique homomorphism from $\uce(\fk)$
to $\uce(\fk_0)$ that covers $f$.
\end{enumerate}

\end{prpt}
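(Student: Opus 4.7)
The plan is to verify four structural properties in sequence. First, I would show that $\uce(\fk)$ is perfect: since $\fk = [\fk, \fk]$, any $x, y \in \fk$ can be written as $x = \sum_i [a_i, b_i]$ and $y = \sum_j [c_j, d_j]$, and the defining bracket formula gives $x \otimes y + B = \sum_{i,j} [a_i \otimes c_j, b_i \otimes d_j]_{\uce(\fk)}$. Second, I would verify that $\mathfrak u_\fk$ is surjective (immediate from perfectness of $\fk$) and that $\ker \mathfrak u_\fk$ is central in $\uce(\fk)$: any element $\sum x_i \otimes y_i + B$ with $\sum [x_i, y_i] = 0$ brackets with an arbitrary $u \otimes v$ to give $(\sum [x_i, y_i]) \otimes [u,v] + B = 0$. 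Together these make $\mathfrak u_\fk$ a central extension of $\fk$.

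For the universal property, given any central extension $\pi: \tilde\fk \to \fk$, I would define $\varphi: \uce(\fk) \to \tilde\fk$ on simple tensors by $\varphi(x \otimes y) = [\tilde x, \tilde y]$, where $\tilde x, \tilde y$ are arbitrary $\pi$-preimages of $x, y$; independence of the choice of lifts follows from centrality of $\ker \pi$ in $\tilde\fk$. I would then check that $\varphi$ descends through $B$: the symmetric relation gives $[\tilde x, \tilde y] + [\tilde y, \tilde x] = 0$, and the Jacobi-like relation $x \otimes [y, z] + \text{cyclic}$ maps to $[\tilde x, [\tilde y, \tilde z]] + \text{cyclic} = 0$ by Jacobi in $\tilde\fk$ (using $[\tilde y, \tilde z]$ as a lift of $[y, z]$). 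The resulting $\varphi$ is a Lie homomorphism covering $\mathfrak u_\fk$. For uniqueness, any two such lifts differ by a linear map valued in the central ideal $\ker \pi$; hence they agree on brackets, and perfectness of $\uce(\fk)$ then forces them to coincide everywhere.

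For the second claim of (a), I would show $Z(\uce(\fk)) \subset \ker \mathfrak u_\fk$ under the centerless hypothesis: given $z \in Z(\uce(\fk))$ and any $y \in \fk$, pick $w \in \uce(\fk)$ with $\mathfrak u_\fk(w) = y$ (by surjectivity) and compute $[\mathfrak u_\fk(z), y] = \mathfrak u_\fk([z, w]) = 0$; centerlessness of $\fk$ forces $\mathfrak u_\fk(z) = 0$. The reverse inclusion was already established above. For (b), well-definedness of $\uce(f)$ follows because $f \otimes f$ sends the relations $B \subset \fk \otimes \fk$ to the analogous relations $B_0 \subset \fk_0 \otimes \fk_0$; the covering identity $\mathfrak u_{\fk_0} \circ \uce(f) = f \circ \mathfrak u_\fk$ is then immediate on simple tensors. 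Uniqueness runs along the same lines as the universality argument: any two lifts of $f \circ \mathfrak u_\fk$ to $\uce(\fk_0)$ differ by a map into $\ker \mathfrak u_{\fk_0}$, which is central in $\uce(\fk_0)$ by the very argument made in (a), so perfectness of $\uce(\fk)$ concludes.

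The main obstacle is bookkeeping rather than depth: one must consistently track which bracket lives in which algebra when verifying that the skew and Jacobi relations defining $B$ are annihilated by $\varphi$ and by $\uce(f)$, and invoke centrality of the appropriate kernel at the right moment. Once the skeleton \emph{``perfectness of $\uce(\fk)$ together with centrality of $\ker \mathfrak u$''} is in place, the universality, uniqueness, and functoriality statements are essentially forced by an abstract nonsense argument that is insensitive to the particular $\fk$ in question.
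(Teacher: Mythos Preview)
Your argument is essentially correct, but note that the paper does not actually prove this proposition: it is stated as a known result and attributed to \cite{N-uce-functor} with the sentence ``The following results were proved in \cite{N-uce-functor}.'' So there is no original proof to compare against; you have supplied a self-contained verification where the paper simply cites.

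Two minor remarks on your sketch. First, in the perfectness step your indices are transposed: with the bracket $[x\otimes x',\,y\otimes y']_{\uce(\fk)}=[x,x']\otimes[y,y']+B$, the correct identity is
\[
x\otimes y + B \;=\; \sum_{i,j}\,[a_i\otimes b_i,\;c_j\otimes d_j]_{\uce(\fk)},
\]
not $\sum_{i,j}[a_i\otimes c_j,\,b_i\otimes d_j]$ as you wrote; the latter expands to $\sum_{i,j}[a_i,c_j]\otimes[b_i,d_j]$, which is not $x\otimes y$. Second, in the uniqueness argument for part~(b) you invoke centrality of $\ker\mathfrak u_{\fk_0}$; this is fine and follows from the same computation you did for $\fk$ (it does not require $\fk_0$ to be perfect or centerless), so the logic is sound, but it may be worth saying explicitly that this step uses only the general fact that $\ker\mathfrak u$ is always central, independent of any hypothesis on the target. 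With those adjustments the write-up is complete and recovers exactly what Neher proves.
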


We also record the following trivial result as a lemma that will be used later on.
\begin{lemt}\label{conj}Let
$\sigma_1,\dots,\sigma_s$ and $\tau_1,\dots,\tau_s$ be pairwise commuting automorphisms of Lie algebras $\mathfrak k$ and $\fk_0$, respectively. Assume that there is a homomorphism $\gamma:\mathfrak k\rightarrow \mathfrak k_0$ such that $\gamma\circ \sigma_i=\tau_i\circ \gamma$ for each $i=1,\dots,s$. Then one has that
\begin{enumerate}
\item[(a)] if the map $\uce(\gamma)$ is injective, then
\[\uce(\gamma)(\uce(\mathfrak k)^{\uce(\sigma_1),\dots,\uce(\sigma_s)})= \uce(\mathfrak k_0)^
{\uce(\tau_1),\dots,\uce(\tau_s)}\cap \mathrm{im}(\uce(\gamma));\]
\item[(b)] if the map $\gamma$ is an isomorphism, then
\[\uce(\gamma):\uce(\mathfrak k)^{\uce(\sigma_1),\dots,\uce(\sigma_s)}\cong \uce(\mathfrak k_0)^
{\uce(\tau_1),\dots,\uce(\tau_s)}.\]
\end{enumerate}
\end{lemt}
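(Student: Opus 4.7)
The plan is to exploit the functoriality of $\uce$ to convert the intertwining relations $\gamma\circ\sigma_i=\tau_i\circ\gamma$ into intertwining relations between $\uce(\gamma)$ and the various $\uce(\sigma_i)$, $\uce(\tau_i)$, and then argue directly on fixed-point subspaces.

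First I would apply the functor $\uce$ to the identities $\gamma\circ\sigma_i=\tau_i\circ\gamma$ for $i=1,\ldots,s$. Since $\uce$ is covariant and respects composition, this yields
\begin{equation*}
\uce(\gamma)\circ\uce(\sigma_i)=\uce(\tau_i)\circ\uce(\gamma),\qquad i=1,\ldots,s.
\end{equation*}
From these equivariance identities the easy inclusion in (a) is immediate: for any $x\in \uce(\fk)^{\uce(\sigma_1),\ldots,\uce(\sigma_s)}$ and each $i$,
\begin{equation*}
\uce(\tau_i)\bigl(\uce(\gamma)(x)\bigr)=\uce(\gamma)\bigl(\uce(\sigma_i)(x)\bigr)=\uce(\gamma)(x),
\end{equation*}
so $\uce(\gamma)(x)$ lies in $\uce(\fk_0)^{\uce(\tau_1),\ldots,\uce(\tau_s)}\cap\mathrm{im}(\uce(\gamma))$.

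For the reverse inclusion in (a) I would take an arbitrary $y$ in the right-hand side and pick $x\in \uce(\fk)$ with $\uce(\gamma)(x)=y$; applying the equivariance identity again gives, for each $i$,
\begin{equation*}
\uce(\gamma)\bigl(\uce(\sigma_i)(x)\bigr)=\uce(\tau_i)(y)=y=\uce(\gamma)(x),
\end{equation*}
and the hypothesis that $\uce(\gamma)$ is injective then forces $\uce(\sigma_i)(x)=x$ for every $i$. Hence $x\in\uce(\fk)^{\uce(\sigma_1),\ldots,\uce(\sigma_s)}$ and $y=\uce(\gamma)(x)$ belongs to its image, proving (a).

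Finally, for (b) I would note that if $\gamma$ is an isomorphism then functoriality of $\uce$ gives an inverse $\uce(\gamma^{-1})=\uce(\gamma)^{-1}$, so $\uce(\gamma)$ is bijective (hence in particular injective, so (a) applies) and $\mathrm{im}(\uce(\gamma))=\uce(\fk_0)$. Part (a) then reduces to
\begin{equation*}
\uce(\gamma)\bigl(\uce(\fk)^{\uce(\sigma_1),\ldots,\uce(\sigma_s)}\bigr)=\uce(\fk_0)^{\uce(\tau_1),\ldots,\uce(\tau_s)},
\end{equation*}
and combined with the injectivity of $\uce(\gamma)$ this gives the asserted isomorphism of fixed-point subalgebras. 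There is no serious obstacle here; the only thing to handle carefully is that the conclusion of (a) requires the injectivity hypothesis precisely at the step where one wants to pull back the fixed-point condition through $\uce(\gamma)$, so the proof really is a two-line diagram chase once the functoriality of $\uce$ has been used to set up the equivariance.
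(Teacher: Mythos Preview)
Your argument is correct and is exactly the straightforward diagram chase one expects here. The paper in fact does not supply a proof of this lemma at all, recording it merely as a ``trivial result'' to be used later; your write-up simply makes explicit the functoriality-plus-injectivity reasoning that the authors leave to the reader.
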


Suppose now that $\sigma_1$ and $\sigma_2$ are two commuting automorphisms of $\dg$ with periods $M_1$
and $M_2$, respectively.
We define
\[\wh{\CL}_{M_1,M_2}(\dg,\sigma_1,\sigma_2)=\CL_{M_1,M_2}(\dg,\sigma_1,\sigma_2)\oplus \CK_{M_1,M_2}\] to
be the Lie algebra with Lie bracket as in \eqref{toroidalre}.
In particular,  we have that
$\wh\fg=\wh{\CL}_{1,r}(\dg,{\rm{id}},\dot\nu)$.
It was proved  in \cite{Sun-uce} that $\wh{\CL}_{M_1,M_2}(\dg,\sigma_1,\sigma_2)$ is the universal central extension
of $\CL_{M_1,M_2}(\dg,\sigma_1,\sigma_2)$.
For convenience, when $M_i$ is the order of $\sigma_i$ for $i=1,2$, we also write
$\wh{\CL}(\dg,\sigma_1,\sigma_2)=\wh{\CL}_{M_1,M_2}(\dg,\sigma_1,\sigma_2)$.

For $\sigma\in \mathrm{Aut}(\dg)$ and $c_1,c_2\in \C^\times$, one can easily verify that the assignment
\begin{align*}
t_1^{m_1} t_2^{m_2}\ot x&\mapsto c_1^{-m_1}c_2^{-m_2} t_1^{m_1}t_2^{m_2}\ot \sigma(x),\ x\in \dg,\ m_1,m_s\in \Z,\\
t_1^{m_1}t_2^{m_2}\rk_i&\mapsto c_1^{-m_1}c_2^{-m_2} t_1^{m_1}t_2^{m_2}\rk_i,\ i=1,2,\end{align*}
determines an automorphism on  $\wh\CL(\dg,{\rm{id}},{\rm{id}})=\uce(\CL(\dg,{\rm{id}},{\rm{id}}))$.
Note that this automorphism covers $c_1^{-\rd_1}\ot c_2^{-\rd_2}\ot \sigma$,
and hence coincides with $\uce(c_1^{-\rd_1}\ot  c_2^{-\rd_2}\ot \sigma)$ (Proposition \ref{uce} (b)).
Using this, it is easy to see that
\begin{align*}
\wh{\CL}_{M_1,M_2}(\dg,\sigma_1,\sigma_2)=(\wh\CL(\dg,{\rm{id}},{\rm{id}}))^{\uce(\xi_{M_1}^{-\rd_1}\ot  1^{-\rd_2}\ot \sigma_1),
\uce(1^{-\rd_1}\ot  \xi_{M_2}^{-\rd_2}\ot \sigma_2)}.
\end{align*}
In other words, we have the following isomorphism
\begin{equation}\begin{split}\label{split}
&\uce\(\CL(\dg,{\rm{id}},{\rm{id}})^{\xi_{M_1}^{-\rd_1}\ot  1^{-\rd_2}\ot \sigma_1,
1^{-\rd_1}\ot  \xi_{M_2}^{-\rd_2}\ot \sigma_2}\)\\
\cong\ &\uce(\CL(\dg,{\rm{id}},{\rm{id}}))^{\uce(\xi_{M_1}^{-\rd_1}\ot  1^{-\rd_2}\ot \sigma_1),
\uce(1^{-\rd_1}\ot  \xi_{M_2}^{-\rd_2}\ot \sigma_2)}.\end{split}\end{equation}

\subsection{Automorphism groups}
In this subsection we collect some basics on the automorphism group of $\fg$,
one may consult \cite[Section 6]{ABP} for details.
Let $\Aut(A)$ be the group of diagram automorphisms of $\fg$.
Define the outer automorphism group of $\fg$  to be
\begin{equation*}
\mathrm{Out}(A)=
\la \omega \ra\times \Aut(A),
\end{equation*} where $\omega$ is the Chevalley involution of $\fg$.

Let  $\mathrm{Hom}(Q,\C^\times)$ denote the set of group homomorphisms from $Q$ to $\C^\times$, which is
 viewed as a group under pointwise multiplication.
The group $\mathrm{Hom}(Q,\C^\times)$ can be identified as
 a subgroup of $\Aut(\fg)$ in the following way:
\begin{align}\label{QinAutg} \mathrm{Hom}(Q,\C^\times)\hookrightarrow \Aut(\fg),\quad
\rho\mapsto (x\mapsto \rho(\al)\,x),\quad x\in \fg_\al,\ \al\in \Delta.
\end{align}
Define the inner automorphism group of $\fg$ to be
\[\Aut^0(\fg)=\la \mathrm{exp}(\mathrm{ad} x_\alpha)\mid  \al\in \Delta^\times\ra\cdot\mathrm{Hom}(Q,\C^\times).\]

Consider now the group homomorphism
\[\bar\chi:\Aut(\fg)\rightarrow \Aut(\bar\fg),\]
 where
$\bar\chi(\tau)=\bar\tau$ is the automorphism of $\bar\fg$ induced from $\tau$.
Note that the restriction of $\bar\chi$ on $\mathrm{Out}(A)$ and $\mathrm{Hom}(Q,\C^\times)$ are both injective.
Thus we may view them as subgroups of $\Aut(\bar\fg)$.
The following statements were proved in \cite[Proposition 6.1.5 and Proposition 6.1.8]{ABP}.
\begin{prpt}\label{affaut}
The homomorphism $\bar\chi$ is an isomorphism.
Furthermore
\[\Aut(\fg)=\Aut^0(\fg)\rtimes \mathrm{Out}(A),\quad \Aut(\bar\fg)=\Aut^0(\bar\fg)\rtimes \mathrm{Out}(A),\]
where $\Aut^0(\bar\fg)=\bar\chi(\Aut^0(\fg))$.
\end{prpt}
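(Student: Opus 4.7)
The plan is to first establish that $\bar\chi$ is an isomorphism, and then derive the semidirect-product decompositions. For injectivity of $\bar\chi$, I would suppose $\tau\in\Aut(\fg)$ satisfies $\bar\tau=1$; then $\tau(x)-x\in Z(\fg)$ for every $x$, defining a linear map $z:\fg\to Z(\fg)$. Applying $\tau$ to a Lie bracket and using the centrality of $z(x),z(y)$ yields $z([x,y])=0$, so $z$ vanishes on $[\fg,\fg]=\fg$ by perfectness, forcing $\tau=1$. For surjectivity, I would invoke the classical fact that $\fg$ is the universal central extension of $\bar\fg$ (a special case of the philosophy behind Theorem \ref{main1}), so $\fg\cong\uce(\bar\fg)$. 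By Proposition \ref{uce}(b) any $\bar\tau\in\Aut(\bar\fg)$ lifts uniquely to an automorphism $\uce(\bar\tau)$ of $\fg$ covering $\bar\tau$, yielding a two-sided inverse to $\bar\chi$. In particular $\bar\chi(\Aut^0(\fg))=\Aut^0(\bar\fg)$, so it suffices to establish the decomposition for $\Aut(\bar\fg)$ and transport it along $\bar\chi$.

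For the decomposition of $\Aut(\bar\fg)$, the central tool is the conjugacy theorem for Cartan subalgebras of the loop algebra $\bar\fg$ (due to Peterson-Kac in the untwisted case, with analogues for the twisted cases): up to conjugation by an element of $\Aut^0(\bar\fg)$, any Cartan subalgebra in the appropriate class can be sent to a fixed standard Cartan $\bar\fh$. Given $\bar\tau\in\Aut(\bar\fg)$, composing with a suitable inner automorphism reduces to the case $\bar\tau(\bar\fh)=\bar\fh$. Since the affine Weyl group $W$ acts on $\bar\fh$ by inner automorphisms (generated by exponentials of real root vectors), a further adjustment by an element of $W$ allows me to assume $\bar\tau$ preserves a chosen system of positive real roots; by possibly composing with the Chevalley involution $\omega$, I may then further assume $\bar\tau$ preserves the simple system $\Pi$. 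The induced permutation of $\Pi$ is an element of $\Aut(A)$, producing a factorization $\bar\tau=\theta\cdot\sigma$ with $\theta\in\Aut^0(\bar\fg)$ and $\sigma\in\mathrm{Out}(A)$.

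To complete the semidirect-product structure I would verify normality of $\Aut^0(\bar\fg)$ and the trivial intersection $\Aut^0(\bar\fg)\cap\mathrm{Out}(A)=\{1\}$. Normality follows from the identity $\bar\tau\circ\exp(\mathrm{ad}\,x_\al)\circ\bar\tau^{-1}=\exp(\mathrm{ad}\,\bar\tau(x_\al))$ together with the fact that $\bar\tau$ sends real-root vectors to scalar multiples of real-root vectors and normalizes $\mathrm{Hom}(Q,\C^\times)$ via its induced action on $Q$. The trivial intersection is seen by examining the induced action on the affine root datum: a nontrivial diagram automorphism acts nontrivially on the Dynkin diagram whereas inner automorphisms act trivially there modulo the Weyl group, and $\omega$ inverts every real root, which cannot be matched by any inner automorphism that preserves both a Cartan and a positive system. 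The main obstacle in this program is the conjugacy theorem for Cartan subalgebras of $\bar\fg$: in the twisted setting, especially in type $A_{2\ell}^{(2)}$ whose affine root system is nonreduced, it requires careful bookkeeping of which Cartan class and which positive system are being stabilized, and a secondary subtlety is ensuring that no additional outer automorphism appears beyond the generators $\omega$ and $\Aut(A)$.
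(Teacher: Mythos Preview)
The paper does not actually prove Proposition~\ref{affaut}: it is quoted verbatim from \cite[Proposition~6.1.5 and Proposition~6.1.8]{ABP}, with no argument given. So your proposal is not a reconstruction of the paper's proof but an independent attempt, and should be compared instead with the arguments in \cite{ABP} (and, behind them, \cite{KW}).

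Your outline is the standard one and is essentially what \cite{ABP} and \cite{KW} do: lift automorphisms via the universal central extension (your surjectivity argument is exactly Proposition~\ref{uce}(b) applied to $\fg=\uce(\bar\fg)$), then use conjugacy of Borel/Cartan data to peel off an inner part and leave a symmetry of the Dynkin diagram, possibly twisted by $\omega$. The injectivity and surjectivity parts are clean and correct as written.

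The genuine difficulty, which you flag but perhaps underestimate, is that the Peterson--Kac conjugacy theorem is stated for the Kac--Moody algebra \emph{with derivation}: in $\fg$ (and a fortiori in $\bar\fg$) the span of the $\al_i^\vee$ is not self-normalizing, since all imaginary root spaces commute with it. One therefore has to pass to $\fg\oplus\C d$, check that every $\tau\in\Aut(\fg)$ extends (after an inner correction) to an automorphism of $\fg\oplus\C d$, and only then invoke conjugacy. This step is where most of the work in \cite[\S4]{KW} and \cite[\S6]{ABP} is spent, and it also governs the first/second kind dichotomy via the induced action on $\C\delta$. A secondary point: to show $\Aut^0(\fg)\cap\mathrm{Out}(A)=\{1\}$ you implicitly use that the image of $\Aut^0(\fg)$ in $\Aut(Q,\Pi)$ is trivial; this follows once you know the action of $\Aut^0(\fg)$ on the root lattice factors through the affine Weyl group, which fixes $\delta$, whereas $\omega$ sends $\delta\mapsto-\delta$ and a nontrivial diagram automorphism moves some simple root. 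That statement about the image of $\Aut^0$ is again part of the structure theory in \cite{KW}/\cite{ABP}, not something immediate from the definition of $\Aut^0(\fg)$.
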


By Proposition \ref{affaut}, we have the following projections
\[p:\Aut(\fg)\rightarrow \mathrm{Out}(A)\quad \text{and}\quad \bar{p}:\Aut(\bar\fg)\rightarrow \mathrm{Out}(A)\]
such that $\bar{p}\circ \bar\chi=p$.
An automorphism $\sigma$ of $\fg$ (resp. $\bar\fg$) is said to be of the {\it first kind} if  $p(\sigma)$ (resp. $\bar{p}(\sigma)$)   lies in $\Aut(A)$.
Otherwise, we say that $\sigma$ is of the {\it second kind}.

\subsection{Universal central extensions}
This subsection is devoted to a proof of the following theorem.
\begin{thm}\label{central}
Let $\bar\eta$ be an  automorphism of $\bar\fg$ of the first kind with
period $M$. Then the Lie algebra
$\uce(\CL(\bar\fg,{\rm{id}}))^{\uce(\xi_M^{-\rd_1}\ot \bar\eta)}$ is the  universal central extension of the  loop algebra
$\CL_M(\bar\fg,\bar\eta)=\CL(\bar\fg,{\rm{id}})^{\xi_M^{-\rd_1}\ot \bar\eta}$.
\end{thm}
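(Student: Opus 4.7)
The strategy is to realize $\CL_M(\bar\fg,\bar\eta)$ as the joint fixed-point subalgebra of the untwisted double-loop algebra $\CL(\dg,{\rm{id}},{\rm{id}}) = \C[t_1^{\pm 1}, t_2^{\pm 1}] \ot \dg$ under two commuting finite-order automorphisms, and then transfer the universal-central-extension property through the functorial isomorphism \eqref{split} together with Lemma \ref{conj}. Since $\bar\fg = (\C[t_2^{\pm 1}] \ot \dg)^{\sigma_0}$ with $\sigma_0 := \xi_r^{-\rd_2}\ot \dot\nu$, one already has $\CL(\bar\fg,{\rm{id}}) = \CL(\dg,{\rm{id}},{\rm{id}})^{1\ot \sigma_0}$, so what is needed is a compatible second commuting automorphism extending $\xi_M^{-\rd_1}\ot \bar\eta$.

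The technical heart of the proof is to produce an automorphism $\tilde\eta$ of $\C[t_2^{\pm 1}] \ot \dg$ of the form $c^{-\rd_2} \ot \sigma$ (for some $c \in \C^\times$ and some $\sigma \in \Aut(\dg)$) that commutes with $\sigma_0$ and restricts to $\bar\eta$ on $\bar\fg$. The first-kind hypothesis on $\bar\eta$ is essential here. By Proposition \ref{affaut}, $\bar\eta = \bar\chi(\eta)$ for some $\eta \in \Aut(\fg)$ with $p(\eta) \in \Aut(A)$, and one decomposes $\eta = \eta^0 \cdot \tau$ with $\tau \in \Aut(A)$ and $\eta^0 \in \Aut^0(\fg) = \langle \exp(\mathrm{ad}\,x_\al)\rangle\cdot \Hom(Q,\C^\times)$. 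The diagram component $\tau$ acts on $\bar\fg$ as $\dot\tau$ together with a $t_2$-shift controlled by $\rho_\tau$ (Proposition \ref{charmu}), and hence visibly lifts to $\rho_\tau(\delta_2)^{-\rd_2}\ot \dot\tau$ on the ambient algebra; the $\Hom(Q,\C^\times)$ part of $\eta^0$ contributes a pure $t_2$-scaling, while the inner piece is absorbed into an automorphism of $\dg$ after conjugation. Were $\bar\eta$ of the second kind (involving the Chevalley involution), the natural lift would instead involve $t_2 \mapsto t_2^{-1}$ and would fall outside the framework of \eqref{split}; this is precisely what the first-kind assumption excludes.

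With $\tilde\eta$ in hand, set $\tilde\sigma_1 := \xi_M^{-\rd_1}\ot \tilde\eta$ and $\tilde\sigma_2 := 1\ot \sigma_0$, viewed as commuting automorphisms of $\CL(\dg,{\rm{id}},{\rm{id}})$ of finite order (enlarging $M$ if needed to accommodate the order of $c$; this does not alter $\CL_M(\bar\fg,\bar\eta)$). By construction $\CL_M(\bar\fg,\bar\eta) = \CL(\dg,{\rm{id}},{\rm{id}})^{\tilde\sigma_1,\,\tilde\sigma_2}$. The single-automorphism analogue of \eqref{split} applied to $\tilde\sigma_2$ alone identifies the map $\uce(\gamma)$ induced by the natural inclusion $\gamma: \CL(\bar\fg,{\rm{id}}) \hookrightarrow \CL(\dg,{\rm{id}},{\rm{id}})$ with the embedding onto $\uce(\CL(\dg,{\rm{id}},{\rm{id}}))^{\uce(\tilde\sigma_2)}$; in particular, $\uce(\gamma)$ is injective. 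Applying Lemma \ref{conj}(a) to $\gamma$ with the commuting-automorphism pair $(\xi_M^{-\rd_1}\ot \bar\eta,\ \tilde\sigma_1)$ then yields
\[\uce(\gamma)\!\left(\uce(\CL(\bar\fg,{\rm{id}}))^{\uce(\xi_M^{-\rd_1}\ot \bar\eta)}\right) \;=\; \uce(\CL(\dg,{\rm{id}},{\rm{id}}))^{\uce(\tilde\sigma_1),\,\uce(\tilde\sigma_2)},\]
and the full \eqref{split} identifies the right-hand side with $\uce(\CL_M(\bar\fg,\bar\eta))$. Since $\CL_M(\bar\fg,\bar\eta)$ is perfect, Proposition \ref{uce}(a) identifies the latter with the universal central extension, completing the argument. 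The main technical obstacle throughout is the construction of $\tilde\eta$ in the required product form $c^{-\rd_2}\ot\sigma$, which is precisely where the first-kind hypothesis is crucially used.
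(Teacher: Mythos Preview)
There is a genuine gap in the heart of your argument: the lift $\tilde\eta$ in the form $c^{-\rd_2}\ot\sigma$ with $\sigma\in\Aut(\dg)$ does not exist in general, and without it the appeal to \eqref{split} collapses. The formula in Proposition~\ref{charmu} shows that a diagram automorphism $\tau\in\Aut(A)$ acts on $\bar\fg$ by $t_2^m\ot x_{[m]}\mapsto t_2^{m+\rho_\tau(\dot\al)}\ot\dot\tau(x_{[m]})$ for $x\in\dg_{\dot\al}$; the shift in the $t_2$-power depends on the \emph{root} $\dot\al$, not on the exponent $m$, so this is never of the form $c^{-m}t_2^m\ot\sigma(x_{[m]})$ unless $\rho_\tau\equiv 0$. (Your expression ``$\rho_\tau(\delta_2)^{-\rd_2}\ot\dot\tau$'' is symptomatic of this problem: $\rho_\tau$ is a homomorphism $\dot Q\to\Z$, and $\delta_2\notin\dot Q$.) The same obstruction arises for the inner part: an automorphism $\exp(\mathrm{ad}\,x_\al)$ with $\al$ a real root having nonzero imaginary component does not preserve the $t_2$-grading and cannot be ``absorbed into an automorphism of $\dg$ after conjugation'' in the way you suggest. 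Already for $\fg$ of type $A_1^{(1)}$ with $\mu$ the node swap, one has $\rho_\mu(\dot\al)=1$ and the lift fails.

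The paper's proof avoids this by a fundamentally different route. Rather than lifting $\bar\eta$ to the untwisted double loop, it invokes the classification of nullity-$2$ centerless cores (Lemma~\ref{iso1}, resting on \cite[Theorem 10.1.1]{ABP}) to produce a \emph{different} pair of commuting automorphisms $(\dot\tau,\dot\nu\dot\rho)$ of $\dg$ for which $\CL_{M_1,M_2}(\dg,\dot\tau,\dot\nu\dot\rho)\cong\CL_M(\bar\fg,\bar\eta)$; these are genuinely automorphisms of $\dg$, so \eqref{split} applies to them directly. The price is that the chain of isomorphisms no longer tracks $\bar\eta$ itself but rather an auxiliary $\bar\tau$, and one must then use Lemmas~\ref{lem:period-uce}, \ref{lem:period-uce2}, \ref{prop:autdec} together with the conjugacy $\bar p(\bar\tau)\sim\bar p(\bar\eta)$ from \cite[Theorem 13.2.3]{ABP} to return to the original fixed-point algebra. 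Your strategy of reducing to commuting automorphisms of $\dg$ is the right instinct, but the passage cannot be done by a direct lift; it requires the external classification input.
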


Recall that the automorphism $\wh{\mu}$ of $\wh\fg=\uce(\CL(\bar\fg,{\rm{id}}))$ covers the automorphism
$\xi^{-\rd_1}\ot \bar\mu$ of $\CL(\bar\fg,{\rm{id}})$ (see \eqref{hatmucover}), and so coincides with $\uce(\xi^{-\rd_1}\ot \bar\mu)$ (Proposition \ref{uce}(b)).
Thus, Theorem \ref{main1} is just a special case of Theorem \ref{central}.

%We need the following special case of the ``erasing theorem" proved in \cite{ABP-covering2}.
%Recall that $\mathrm{Out}(A)$ and $\mathrm{Hom}(Q,\C^\times)$ are naturally subgroups of $\mathrm{Aut}(\bar\fg)$.
%\begin{lemt}\label{lem:erasing-thm}
%Let $\nu\in \mathrm{Out}(A)$ and $\rho\in \mathrm{Hom}(Q,\C^\times)$ be two commuting automorphisms of $\bar\fg$.
%Assume that $M$ is a positive integer such that $\nu^M=1=\rho^M$. Then, for any period $d$ of $\nu$,  the automorphism $\xi_{dM}^{-\rd_1}\ot (\rho\nu)$  of $\CL(\bar\fg,{\rm{id}})$ is conjugate to $ \xi_{dM}^{-\rd_1}\ot \nu$.
%\end{lemt}

We first prove some technique lemmas.
 Let $\bar\sigma$ be an automorphism of $\bar\fg$ with period $M$. It is known that the twisted loop algebra of $\bar\fg$ related to
 $\bar\sigma$ is independent from the choice of its periods (\cite[Lemma 2.3]{ABP-covering2}).
In the following, we extend this result to their universal central extensions.
\begin{lemt}\label{lem:period-uce}
Let $\bar\sigma$ be an automorphism of $\bar\fg$ of finite period,
and $M,M'$ two periods of $\bar\sigma$. Then
\begin{equation}
\uce(\CL(\bar\fg,{\rm{id}}))^{\uce( \xi_M^{-\rd_1}\ot \bar\sigma)}\cong
\uce(\CL(\bar\fg,{\rm{id}}))^{\uce( \xi_{M'}^{-\rd_1}\ot \bar\sigma)}.\end{equation}
\end{lemt}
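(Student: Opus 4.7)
The plan is to reduce to the case when one period divides the other, and then to exhibit an intertwining endomorphism of $\CL(\bar\fg,\mathrm{id})$ to which Lemma \ref{conj}(a) can be applied.

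Set $L=\mathrm{lcm}(M,M')$, which is also a period of $\bar\sigma$. By transitivity of the isomorphism it suffices to prove the statement under the assumption $M'=kM$ for some positive integer $k$.

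Consider the injective Lie algebra endomorphism
\[
\iota:\CL(\bar\fg,\mathrm{id})\longrightarrow\CL(\bar\fg,\mathrm{id}),\qquad t_1^m\ot x\longmapsto t_1^{km}\ot x.
\]
The identity $\xi_{kM}^k=\xi_M$ yields the intertwining relation $\iota\circ(\xi_M^{-\rd_1}\ot\bar\sigma)=(\xi_{kM}^{-\rd_1}\ot\bar\sigma)\circ\iota$. Applying the functor $\uce$ and identifying $\uce(\CL(\bar\fg,\mathrm{id}))=\wh\fg$, I obtain an endomorphism $\uce(\iota)$ of $\wh\fg$ which intertwines $\uce(\xi_M^{-\rd_1}\ot\bar\sigma)$ with $\uce(\xi_{kM}^{-\rd_1}\ot\bar\sigma)$. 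A direct computation with the bracket formula \eqref{toroidalre}, applied to commutators such as $[t_1\ot x,t_1^{-1}\ot y]$ and $[t_1 t_2^{b}\ot x,t_1^{a-1}\ot y]$ for suitable $x,y$, shows that $\uce(\iota)$ acts on the central part $\CK_{1,r}$ by
\[
\uce(\iota)(t_1^a t_2^b\rk_1)=k\,t_1^{ka}t_2^b\rk_1,\qquad \uce(\iota)(t_1^a t_2^b\rk_2)=t_1^{ka}t_2^b\rk_2.
\]
In particular $\uce(\iota)$ is injective on the center, and hence on all of $\wh\fg$.

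By Lemma \ref{conj}(a),
\[
\uce(\iota)\bigl(\wh\fg^{\uce(\xi_M^{-\rd_1}\ot\bar\sigma)}\bigr)=\wh\fg^{\uce(\xi_{kM}^{-\rd_1}\ot\bar\sigma)}\cap\mathrm{im}(\uce(\iota)).
\]
To finish I verify $\wh\fg^{\uce(\xi_{kM}^{-\rd_1}\ot\bar\sigma)}\subseteq\mathrm{im}(\uce(\iota))$. Writing $d$ for the order of $\bar\sigma$, the non-central part of the fixed subalgebra coincides with $\CL_{kM}(\bar\fg,\bar\sigma)$ and is supported on powers $t_1^m$ with $m\in(kM/d)\Z\subseteq k\Z$; a central fixed element $t_1^a t_2^b\rk_i$ must satisfy $\xi_{kM}^{-a}=1$, giving $a\in kM\Z\subseteq k\Z$. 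Both supports lie in $\mathrm{im}(\uce(\iota))$, so $\uce(\iota)$ restricts to the desired isomorphism of fixed-point subalgebras. The main obstacle will be justifying the action of $\uce(\xi_M^{-\rd_1}\ot\bar\sigma)$ on $\CK_{1,r}$ by the simple scalar $\xi_M^{-a}$ for $\bar\sigma$ not necessarily of the first kind: for diagram automorphisms this is exactly Lemma \ref{lem:wh-mu-N-period}, but the general case requires an independent argument ruling out extra central twisting by $\bar\sigma$.
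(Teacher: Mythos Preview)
Your overall strategy---reducing to $M'=kM$, using the rescaling embedding $\iota$, applying the functor $\uce$, checking injectivity on the center, and then invoking Lemma~\ref{conj}(a)---is exactly the paper's. The difference lies in the final containment $\wh\fg^{\uce(\xi_{kM}^{-\rd_1}\ot\bar\sigma)}\subseteq\mathrm{im}(\uce(\iota))$, and this is precisely where your self-diagnosed obstacle is real.

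Your claim that a central fixed element $t_1^{a}t_2^{b}\rk_i$ must satisfy $\xi_{kM}^{-a}=1$ presupposes that $\uce(\xi_{kM}^{-\rd_1}\ot\bar\sigma)$ acts on the degree-$a$ piece of $\CK_{1,r}$ by the pure scalar $\xi_{kM}^{-a}$. For a general finite-order $\bar\sigma$ there is no reason this should hold: the factor $\uce(1^{-\rd_1}\ot\bar\sigma)$ preserves each $t_1$-degree but may act nontrivially there, so the fixed locus in degree $a$ can be nonzero for $a\notin kM\Z$. You are right that Lemma~\ref{lem:wh-mu-N-period} does not cover this, and no independent argument is supplied.

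The paper circumvents the difficulty without ever computing the action of $\bar\sigma$ on the center. One simply raises the automorphism to the $M$-th power:
\[
\bigl(\uce(\xi_{kM}^{-\rd_1}\ot\bar\sigma)\bigr)^{M}
=\uce\bigl((\xi_{kM}^{-\rd_1}\ot\bar\sigma)^{M}\bigr)
=\uce(\xi_k^{-\rd_1}\ot\mathrm{id}),
\]
using $\bar\sigma^{M}=\mathrm{id}$. The right-hand side is now easy: it acts on the $t_1$-degree-$a$ part of $\wh\fg$ by the scalar $\xi_k^{-a}$, so its fixed-point subalgebra is exactly $\wh{\CL}_{k,r}(\dg,\mathrm{id},\dot\nu)=\mathrm{im}(\uce(\iota))$. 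Since fixed points of an automorphism are contained in fixed points of any power, the desired inclusion follows. This one-line trick replaces your missing computation and makes the proof go through for arbitrary finite-order $\bar\sigma$.
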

\begin{proof} We may (and do) assume that $M'=bM$ for some $b\in \Z_+$.
Consider the natural imbedding
\[i_b: \CL(\bar\fg,{\rm{id}})\rightarrow \CL(\bar\fg,{\rm{id}}),\quad t_1^m\ot x\mapsto t_1^{bm}\ot x,\]
where $m\in\Z$ and $x\in \bar\fg$. It is clear that the image of $i_b$ is the Lie algebra $\mathcal L_b(\bar\fg,{\rm{id}})=\mathcal L_{b,r}(\dg,{\rm{id}},\dot\nu)$ and
that
\begin{align}\label{period11}(\xi_{M'}^{-\rd_1}\ot \bar\sigma)\circ i_b=
i_b\circ (\xi_{M}^{-\rd_1}\ot \bar\sigma).
\end{align}
Using Proposition \ref{uce} (b), it is easy to see that the action of $\uce(i_b)$ on the center of  $\uce(\CL(\bar\fg,{\rm{id}}))$ $=
 \uce\({\mathcal L}_{1,r}(\dg,{\rm{id}},\dot\nu)\)$ is given by
 \begin{align*}
 t_1^{m_1}t_2^{m_2}\rk_i\mapsto t_1^{bm_1}t_2^{m_2}\rk_i,\quad i=1,2,\ m_1\in \Z,\ m_2\in r\Z.
 \end{align*}
 This implies  that
\begin{align}\label{period12}
\te{the map}\ \uce(i_b)\ \te{is injective,}\end{align} and that
 \begin{equation}
 \begin{split}\label{period13}
 &\mathrm{im}(\uce(i_b))=\uce\({\mathcal L}_{b,r}(\dg,{\rm{id}},\dot\nu)\)\\
 =\ &\uce(\mathcal L(\dg,{\rm{id}},{\rm{id}}))^{\uce(\xi_b^{-\rd_1}\ot 1^{-\rd_2}\ot 1),\uce(1^{-\rd_1}\ot \xi_r^{-\rd_2}\ot \dot\nu)}\\
 =\ &(\uce(\mathcal L(\dg,{\rm{id}},{\rm{id}}))^{\uce(1^{-\rd_1}\ot \xi_r^{-\rd_2}\ot \dot\nu)})^{\uce(\xi_b^{-\rd_1}\ot 1^{-\rd_2}\ot 1)}\\
 =\ &\uce(\CL(\bar\fg,{\rm{id}}))^{\uce(\xi_b^{-\rd_1}\ot 1)}.\end{split}\end{equation}
Note that we also have
\begin{equation*}\begin{split}
&\uce(\CL(\bar\fg,{\rm{id}}))^{\uce(\xi_{M'}^{-\rd_1}\ot \bar\sigma)}\subset
\uce(\CL(\bar\fg,{\rm{id}}))^{(\uce(\xi_{M'}^{-\rd_1}\ot \bar\sigma))^M}\\
=\ &\uce(\CL(\bar\fg,{\rm{id}}))^{\uce((\xi_{M'}^{-\rd_1}\ot \bar\sigma)^M)}
= \uce(\CL(\bar\fg,{\rm{id}}))^{\uce(\xi_b^{-\rd_1}\ot 1)}.
\end{split}\end{equation*}
This together with \eqref{period13} gives that
\begin{align}\label{period14}\mathrm{im}(\uce(i_b))\cap \uce(\CL(\bar\fg,{\rm{id}}))^{\uce(\xi_{M'}^{-\rd_1}\ot \bar\sigma)}=
\uce(\CL(\bar\fg,{\rm{id}}))^{\uce(\xi_{M'}^{-\rd_1}\ot \bar\sigma)}.
\end{align}
Now  the assertion is implied by \eqref{period11}, \eqref{period12},
\eqref{period14} and Lemma \ref{conj} (a).
\end{proof}

Let $\bar\sigma$ be an automorphism of $\bar\fg$ with period $M$.
Now $\bar\fg=\CL(\dg,\dot\nu)$  itself is a twisted loop algebra and so  is independent from the choice of the period of $\dot\nu$.
Namely, if $M'$ is another period of $\dot\nu$, then one has the natural isomorphism
$\bar\fg\cong \CL_{M'}(\dg,\dot\nu)$. Via this isomorphism, $\bar\sigma$ induces an automorphism,
say $\bar\sigma'$, of $\CL_{M'}(\dg,\dot\nu)$ with period $M$.
Similar to Lemma \ref{lem:period-uce}, we have that
\begin{lemt}\label{lem:period-uce2} Let $\bar\sigma,M,M'$ and $\bar\sigma'$ be as above. Then one has that
\begin{align}
\uce(\CL(\bar\fg,{\rm{id}}))^{\uce(\xi_M^{-\rd_1}\ot \bar\sigma)}\cong
\uce(\CL(\CL_{M'}(\dg,\dot\nu),{\rm{id}}))^{\uce(\xi_M^{-\rd_1}\ot \bar\sigma')}.
\end{align}
\end{lemt}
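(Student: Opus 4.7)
The plan is to reduce the statement to a direct application of Lemma \ref{conj}(b) through the functor $\uce$, exploiting the fact that, in contrast to Lemma \ref{lem:period-uce}, the ground-level map between $\bar\fg$ and $\CL_{M'}(\dg,\dot\nu)$ is already an isomorphism of Lie algebras, so no image/intersection analysis is required.

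Writing $M'=br$ with $r$ the order of $\dot\nu$, I would first record the natural period-inflation isomorphism
\[
\phi:\bar\fg=\CL(\dg,\dot\nu)\longrightarrow \CL_{M'}(\dg,\dot\nu),\qquad t_2^k\ot x\mapsto t_2^{bk}\ot x\quad(x\in\dg_{[k]}).
\]
This is a Lie algebra isomorphism, since for $x\in\dg_{[k]}$ one has $\dot\nu(x)=\xi_r^k x=\xi_{M'}^{bk}x$, which is exactly the condition defining the degree-$bk$ component of $\CL_{M'}(\dg,\dot\nu)$, and the bracket on either side is determined entirely by the bracket on $\dg$. By the very definition of $\bar\sigma'$ one has $\bar\sigma'=\phi\circ\bar\sigma\circ\phi^{-1}$, and in particular $\bar\sigma'$ inherits period $M$.

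Next I would extend $\phi$ to a Lie algebra isomorphism of the outer loop algebras,
\[
\Phi:=\mathrm{id}\ot\phi:\CL(\bar\fg,\mathrm{id})\longrightarrow \CL(\CL_{M'}(\dg,\dot\nu),\mathrm{id}),\qquad t_1^m\ot y\mapsto t_1^m\ot\phi(y),
\]
and check that $\Phi$ intertwines $\xi_M^{-\rd_1}\ot\bar\sigma$ with $\xi_M^{-\rd_1}\ot\bar\sigma'$. This is a one-line computation: the $\xi_M^{-\rd_1}$-factor only scales the $t_1$-monomial and commutes past $\Phi$, while the identity $\bar\sigma'\circ\phi=\phi\circ\bar\sigma$ handles the inner tensor factor.

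Finally, applying the covariant functor $\uce$ together with Proposition \ref{uce}(b) produces an isomorphism $\uce(\Phi)$ of universal central extensions that intertwines $\uce(\xi_M^{-\rd_1}\ot\bar\sigma)$ with $\uce(\xi_M^{-\rd_1}\ot\bar\sigma')$. Lemma \ref{conj}(b) applied with $s=1$ and $\gamma=\Phi$ then yields the claimed isomorphism of fixed-point subalgebras. There is no real obstacle; the only step worth checking carefully is that $\phi$ genuinely respects the gradings and brackets of the two multiloop realizations of the same underlying algebra, and this is immediate from the definitions.
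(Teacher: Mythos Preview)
Your argument is correct. Since the period-change map $\phi:\bar\fg\to\CL_{M'}(\dg,\dot\nu)$ is a genuine isomorphism (not merely an embedding), functoriality of $\uce$ immediately makes $\uce(\Phi)$ an isomorphism, and Lemma~\ref{conj}(b) applies directly.

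The paper proceeds slightly differently: it chooses to embed $\bar\fg$ into the \emph{untwisted} loop algebra $\CL(\dg,{\rm id})$ via $j_b$, so that the induced map $1^{-\rd_1}\ot j_b$ lands in the larger space $\CL(\dg,{\rm id},{\rm id})$. This forces the authors to verify injectivity of $\uce(1^{-\rd_1}\ot j_b)$ by an explicit computation on central elements $t_1^{m_1}t_2^{m_2}\rk_i$, to identify its image as $\uce(\CL_{1,M'}(\dg,{\rm id},\dot\nu))=\uce(\CL(\CL_{M'}(\dg,\dot\nu),{\rm id}))$, and then to invoke part (a) of Lemma~\ref{conj}. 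The payoff of the paper's route is that it mirrors the proof of Lemma~\ref{lem:period-uce} line by line, and it keeps all objects sitting inside the single ambient algebra $\uce(\CL(\dg,{\rm id},{\rm id}))$, which is convenient for the later arguments (e.g.\ \eqref{eq:temp-tss1}). Your route is shorter and conceptually cleaner for this particular lemma, since it recognises that no image/intersection bookkeeping is needed when the ground-level map is already bijective.
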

\begin{proof}
 Set $b=M'/r$ and define the embedding
\[j_b: \bar\fg=\mathcal L(\dg,\dot\nu)\rightarrow \mathcal L(\dg,{\rm{id}}),\quad t_2^{m_2}\ot x\mapsto t_2^{bm_2}\ot x,\ m_2\in \Z, x\in \dg.\]
Then the image of $j_b$ is the Lie algebra $\mathcal L_{M'}(\dg,\dot\nu)$ and
\begin{align}\label{eq:def-bar-tau}j_b\circ\bar\sigma=\bar\sigma'\circ j_b.\end{align}
Moreover,
the
action of $\uce\(1^{-\rd_1}\ot j_b\)$ on the center of
$\uce\(\CL\(\bar\fg,{\rm{id}}\)\)$
is  given by
\begin{align*}
  t_1^{m_1}t_2^{m_2}\rk_i\mapsto t_1^{m_1}t_2^{bm_2}\rk_i,\quad i=1,2,\,m_1\in\Z, m_2\in r\Z.
\end{align*}
This implies that
\begin{align}\label{eq:inj-j}
\te{the map }\uce(1^{-\rd_1}\ot j_b)\,\te{is injective}
\end{align}
and that
\begin{align}\label{eq:im-j}
\mathrm{im}(\uce(1^{-\rd_1}\ot j_b))=\uce\(\CL_{1,M'}\(\dg,{\rm{id}},\dot\nu\)\)
=\uce\(\CL\(\CL_{M'}(\dg,\dot\nu),{\rm{id}}\)\).
\end{align}
Then  the lemma follows from \eqref{eq:def-bar-tau},
\eqref{eq:inj-j}, \eqref{eq:im-j} and  Lemma \ref{conj} (a).
\end{proof}

Using Lemma \ref{lem:period-uce}, we have the following result.
\begin{lemt}\label{prop:autdec} Let $\bar\sigma$ be an automorphism of $\bar\fg$ with period $M$. Then
\begin{align}
\uce(\CL(\bar\fg,{\rm{id}}))^{\uce(\xi_M^{-\rd_1}\ot \bar\sigma)}\cong
\uce(\CL(\bar\fg,{\rm{id}}))^{\uce(\xi_M^{-\rd_1}\ot \bar{p}(\bar\sigma))}.
\end{align}
\end{lemt}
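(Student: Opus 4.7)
The plan is to reduce Lemma~\ref{prop:autdec} to constructing a Lie algebra automorphism $\gamma$ of $\CL(\bar\fg,{\rm{id}})$ that intertwines the two twists, that is,
\[\gamma\circ(\xi_M^{-\rd_1}\ot\bar\sigma)=(\xi_M^{-\rd_1}\ot\bar p(\bar\sigma))\circ\gamma.\]
Once such a $\gamma$ is produced, Lemma~\ref{conj}(b), applied with the single pair $\sigma_1=\xi_M^{-\rd_1}\ot\bar\sigma$ and $\tau_1=\xi_M^{-\rd_1}\ot\bar p(\bar\sigma)$, immediately yields $\uce(\gamma)$ as an isomorphism from $\uce(\CL(\bar\fg,{\rm{id}}))^{\uce(\xi_M^{-\rd_1}\ot\bar\sigma)}$ onto $\uce(\CL(\bar\fg,{\rm{id}}))^{\uce(\xi_M^{-\rd_1}\ot\bar p(\bar\sigma))}$.

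By Proposition~\ref{affaut}, write $\bar\sigma=\bar\tau\cdot\bar p(\bar\sigma)$ uniquely with $\bar\tau\in\Aut^0(\bar\fg)$. The strategy is to absorb $\bar\tau$ into a $t_1$-dependent automorphism of $\CL(\bar\fg,{\rm{id}})$, built from the generators $\exp(\mathrm{ad}\,x_\al)$, $\al\in\Delta^\times$, and $\Hom(Q,\C^\times)$ of $\Aut^0(\bar\fg)$. For a toral factor $\rho\in\Hom(Q,\C^\times)$ with $\rho^M=1$, pick integers $k_i$ satisfying $\rho(\al_i)=\xi_M^{-k_i}$ at each simple root $\al_i$ and extend $\Z$-linearly to a group homomorphism $k\colon Q\to\Z$; the rule $t_1^m\ot x_\al\mapsto t_1^{m+k(\al)}\ot x_\al$ (for $x_\al\in\bar\fg_\al$) is then a Lie algebra automorphism of $\CL(\bar\fg,{\rm{id}})$, and a direct computation shows that it contributes exactly the compensating factor $\xi_M^{-k(\al)}=\rho(\al)^{-1}$ required in the intertwining relation. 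The unipotent factors $\exp(\mathrm{ad}\,x_\al)$ are handled analogously by $t_1$-dependent exponentials of the form $\exp(\mathrm{ad}(t_1^j\ot x_\al))$, with the integer $j$ chosen so that conjugation by the loop-rotation part of $\xi_M^{-\rd_1}\ot\bar p(\bar\sigma)$ has the required absorbing effect.

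The main obstacle is patching these individual constructions into a single $\gamma$ satisfying the full intertwining relation: because $\Aut(\bar\fg)=\Aut^0(\bar\fg)\rtimes\mathrm{Out}(A)$ is a genuine semidirect product, $\bar p(\bar\sigma)$ acts nontrivially on $\bar\tau$, and so the toral and unipotent absorbing maps do not commute in an obvious way. The natural remedy, and the reason the proof is introduced with the phrase ``Using Lemma~\ref{lem:period-uce}'', is first to enlarge $M$ to a common multiple well-adapted to both $\bar\sigma$ and $\bar p(\bar\sigma)$, after which $k$ and the exponents $j$ can be chosen $\bar p(\bar\sigma)$-equivariantly; the combined $\gamma$ then intertwines the two automorphisms as required. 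Conceptually, this step formalises the classical principle that the isomorphism class of a twisted loop algebra depends on the twisting only through its image in $\mathrm{Out}(A)$.
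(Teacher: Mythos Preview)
Your strategy is the right \emph{shape}---find an automorphism of $\CL(\bar\fg,{\rm{id}})$ intertwining the two twists, then apply Lemma~\ref{conj}(b)---but the execution has a genuine gap, and the paper takes a different route precisely to avoid it.

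The problem is the unipotent part. Conjugating $\exp(\mathrm{ad}(t_1^j\ot x_\al))$ by $\xi_M^{-\rd_1}\ot\bar p(\bar\sigma)$ yields $\exp\bigl(\mathrm{ad}(\xi_M^{-j}\,t_1^j\ot \bar p(\bar\sigma)(x_\al))\bigr)$: this rescales the coefficient of $x_\al$ and moves $x_\al$ by $\bar p(\bar\sigma)$, but it does \emph{not} produce the constant factor $\exp(\mathrm{ad}(1\ot x_\al))$ you need to absorb. There is no choice of integer $j$ that makes this work, so the ``analogous'' treatment you sketch does not go through. Likewise, for the toral factor you need $\rho^M=1$, but writing $\bar\tau\in\Aut^0(\bar\fg)$ as a product of unipotent and toral generators gives no control on the order of either factor individually. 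Your last paragraph effectively concedes that the patching is the whole difficulty and that enlarging $M$ should help, but no actual construction of $\gamma$ is given.

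The paper sidesteps all of this by first invoking \cite[Lemma~4.31]{KW}: any finite-order automorphism of $\fg$ is conjugate in $\Aut(\fg)$ to one of the form $\rho\cdot p(\sigma)$ with $\rho\in\Hom(Q,\C^\times)$, $\rho^M={\rm{id}}$, and $\rho\,p(\sigma)=p(\sigma)\,\rho$. This single citation eliminates the unipotent part \emph{and} the noncommutativity obstacle in one stroke (the conjugation in $\Aut(\bar\fg)$ passes to $\CL(\bar\fg,{\rm{id}})$ via $1\ot\theta$ and then to $\uce$ by Lemma~\ref{conj}(b)). With only a commuting toral factor left, \cite[(5.3)]{ABP-covering2} supplies an explicit conjugation $\xi_M^{-\rd_1}\ot\rho\,\bar p(\bar\sigma)\sim\xi_{M^2}^{-\rd_1}\ot\bar p(\bar\sigma)$, and Lemma~\ref{lem:period-uce} then removes the period change $M\to M^2$. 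So the role of Lemma~\ref{lem:period-uce} is not to help with patching unipotent and toral pieces, but to undo the period-squaring coming from \cite{ABP-covering2}.
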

\begin{proof}
Recall the isomorphism $\bar\chi:\Aut(\fg)\rightarrow \Aut(\bar\fg)$ given in Proposition \ref{affaut}.
Then we may choose an automorphism $\sigma$ of $\fg$
such that $\sigma^M={\rm{id}}$ and $\bar\chi(\sigma)=\bar\sigma$. This together with  \cite[Lemma 4.31]{KW} gives that there exists a
$\rho\in \mathrm{Hom}(Q,\C^\times)$ such that
\[\rho\, \bar{p}(\bar{\sigma})=\bar{p}(\bar{\sigma})\,\rho,\quad \rho^M={\rm{id}}\quad \te{and}\quad \bar\sigma\ \te{is conjugate to}\ \bar{p}(\bar{\sigma})\rho.\]
Note that the automorphisms $\rho$ and $\bar{p}(\bar{\sigma})$ of $\bar\fg$ satisfy all
the assumptions stated in  \cite[Theorem 5.1]{ABP-covering2}.
Then it follows from \cite[(5.3)]{ABP-covering2} that
the automorphism $\xi_M^{-\rd_1}\ot \rho\bar{p}(\bar\sigma)$ is conjugate to
$\xi_{M^2}^{-\rd_1}\ot \bar{p}(\bar\sigma)$.
This together with  Lemma \ref{conj} (b) and Lemma \ref{lem:period-uce} gives that
\begin{align*}
\uce(\CL(\bar\fg,{\rm{id}}))^{\uce(\xi_M^{-\rd_1}\ot \bar\sigma)}
&\cong \uce(\CL(\bar\fg,{\rm{id}}))^{\uce(\xi_M^{-\rd_1}\ot \rho\bar{p}(\bar\sigma))}\\
\cong \uce(\CL(\bar\fg,{\rm{id}}))^{\uce(\xi_{M^2}^{-\rd_1}\ot \bar{p}(\bar\sigma))}
&\cong \uce(\CL(\bar\fg,{\rm{id}}))^{\uce(\xi_{M}^{-\rd_1}\ot \bar{p}(\bar\sigma))}.
\end{align*}
Therefore, we complete the proof.
\end{proof}

Let $\mathrm{Hom}(\dot{Q},\C^\times)$ be the set of group homomorphisms from $\dot{Q}$ to $\C^\times$.
Similar to \eqref{QinAutg}, we may (and do) view $\mathrm{Hom}(\dot{Q},\C^\times)$ as a subgroup of $\Aut(\dg)$.
From now on, let $\bar\eta$ be as in Theorem \ref{central}.
The following characterization of $\mathcal L_M(\bar{\fg},\bar\eta)$ plays a key role in the proof of Theorem \ref{central}.

\begin{lemt}\label{iso1} There exist finite order automorphisms $\dot\rho$ and $\dot\tau$ of $\dg$  such that
\[\dot\rho\in \mathrm{Hom}(\dot{Q},\C^\times),\ \dot\nu\dot\rho=\dot\rho\dot\nu,\ (\dot\nu\dot\rho)\dot\tau=\dot\tau(\dot\nu\dot\rho)\ \te{and}\ \mathcal L_{M_1,M_2}(\dg,\dot\tau,\dot\nu\dot\rho)\cong \mathcal L_M(\bar{\fg},\bar\eta),\]
where $M_1$ and $M_2$ are some periods of $\dot\tau$ and $\dot\nu\dot\rho$, respectively.
\end{lemt}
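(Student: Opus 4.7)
The overall plan is to first reduce $\bar\eta$ to a standard form (a diagram part times a commuting character), then unfold both $\bar\fg$ and the new $\bar\eta$ into automorphisms of $\C[t_1^{\pm 1}, t_2^{\pm 1}]\ot \dg$ of the type defining a double multiloop, so that the conclusion becomes a direct application of \eqref{split} at the level of loop algebras.

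Since $\bar\eta$ is of the first kind, set $\eta_0 := \bar{p}(\bar\eta)\in \Aut(A)$. By Proposition \ref{affaut} and arguing as in the proof of Lemma \ref{prop:autdec} (via \cite[Lemma 4.31]{KW} and Lemma \ref{conj}(b), applicable because the isomorphism type of $\CL_M(\bar\fg,\bar\eta)$ is preserved under conjugation of $\bar\eta$), we may reduce to $\bar\eta = \lambda\cdot \bar\chi(\eta_0)$ with $\lambda\in \mathrm{Hom}(Q,\C^\times)$ a finite-order character commuting with $\bar\chi(\eta_0)$.

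Next, combining Lemma \ref{lem:dmu} applied to $\mu = \eta_0$ with the explicit action of $\lambda$ via \eqref{QinAutg}, I will define $\dot\tau\in \Aut(\dg)$ as a $\dot Q$-graded twist of $\dot\eta_0$ that absorbs the $t_2$-shift $\rho_{\eta_0}$, and $\dot\rho\in \mathrm{Hom}(\dot Q,\C^\times)$ by combining $\lambda|_{\dQ_{[0]}}$ with the scalar $c^{\rho_{\eta_0}(\cdot)}$, where $c := \lambda(\delta_2)^{-1}$. The $\dot\nu$-invariance of $\rho_{\eta_0}$ from \eqref{rhomu} then gives $\dot\nu\dot\rho = \dot\rho\dot\nu$, and the commutator identity $\dot\eta_0\dot\nu = \xi_r^{-\rho_{\eta_0}(\dot\al)}\dot\nu\dot\eta_0$ on $\dg_{\dot\al}$ (which follows from \eqref{dmurhomu}), together with the explicit formula for $\dot\rho$, yields $\dot\tau(\dot\nu\dot\rho) = (\dot\nu\dot\rho)\dot\tau$. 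Extending to $\C[t_1^{\pm 1}, t_2^{\pm 1}]\ot \dg$, the two commuting automorphisms $\xi_M^{-\rd_1}\ot c^{-\rd_2}\ot \dot\tau$ and $1^{-\rd_1}\ot \xi_r^{-\rd_2}\ot \dot\nu\dot\rho$ have common fixed subalgebra equal to $\CL_M(\bar\fg,\bar\eta)$ under the identification of $\bar\fg \subset \C[t_2^{\pm 1}]\ot \dg$ modified by the $\dot\rho$-twist. Choosing $M_1, M_2\in \Z_+$ as appropriate common periods and applying \cite[Theorem 5.1]{ABP-covering2} (exactly as in the proof of Lemma \ref{prop:autdec}) to absorb the constant $c^{-\rd_2}$ into $\xi_{M_1}^{-\rd_1}$ via a rescaling of $t_2$, these two automorphisms take the form defining $\CL_{M_1,M_2}(\dg,\dot\tau,\dot\nu\dot\rho)$, yielding the required isomorphism.

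The main obstacle is the construction of $\dot\rho$ and $\dot\tau$ in the second paragraph: ensuring $\dot\rho$ is a \emph{well-defined} character of $\dQ$ of finite order commuting with $\dot\nu$, and that $\dot\tau$ commutes with $\dot\nu\dot\rho$. The subtlety arises from the non-commutativity $\dot\eta_0\dot\nu\ne \dot\nu\dot\eta_0$ on $\dg$ in general (equality holds only on $\dfh$, cf.\ \eqref{dmudfh}); the character $\dot\rho$ is designed precisely to compensate this discrepancy, and its finite-order property combined with $\dot\eta_0^{\,\te{ord}(\eta_0)} = 1$ requires a careful accounting of $\rho_{\eta_0}$ along $\dot\eta_0$-orbits in $\dQ$.
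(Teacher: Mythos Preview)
Your route is genuinely different from the paper's. The paper does not construct $\dot\tau$ and $\dot\rho$ at all: it invokes \cite[Theorem 10.1.1]{ABP} as a black box to obtain \emph{some} commuting finite-order $\dot\tau,\dot\sigma\in\Aut(\dg)$ with $\CL(\bar\fg,\bar\eta)\cong\CL(\dg,\dot\tau,\dot\sigma)$, then writes $\dot\sigma=\dot\rho\dot\vartheta$ with $\dot\vartheta$ a diagram automorphism, and finally argues that $\dot\vartheta$ can be taken equal to $\dot\nu$ by comparing the classification tables in \cite[Tables 3]{ABP} and \cite[Table 9.2.4]{GP-torsors} case by case. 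Your constructive approach, if it worked, would be more self-contained and would avoid the table comparison; conversely, the paper's argument is short precisely because the hard work is outsourced to \cite{ABP,GP-torsors}.

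However, your sketch has a real gap at the crucial step. By Proposition \ref{charmu}, the action of $\eta_0$ on $\bar\fg\subset\C[t_2^{\pm1}]\ot\dg$ involves a \emph{root-dependent} shift $t_2^m\mapsto t_2^{m+\rho_{\eta_0}(\dot\al)}$. An automorphism of the form $c^{-\rd_2}\ot\dot\tau$ with $\dot\tau\in\Aut(\dg)$ acts fiberwise in $t_2$ and cannot produce such a shift; so no ``$\dQ$-graded twist of $\dot\eta_0$'' will make $\xi_M^{-\rd_1}\ot c^{-\rd_2}\ot\dot\tau$ restrict to $\xi_M^{-\rd_1}\ot\bar\eta$ on $\C[t_1^{\pm1}]\ot\bar\fg$ under the standard inclusion. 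You attempt to fix this by also replacing the second automorphism $\xi_r^{-\rd_2}\ot\dot\nu$ with $\xi_r^{-\rd_2}\ot\dot\nu\dot\rho$ and invoking an ``identification modified by the $\dot\rho$-twist'', but you neither construct that identification nor verify that $\bar\eta$ transports to $c^{-\rd_2}\ot\dot\tau$ under it. Concretely, commutation $(\dot\nu\dot\rho)\dot\tau=\dot\tau(\dot\nu\dot\rho)$ for $\dot\tau=\chi\cdot\dot\eta_0$ forces, on $\dg_{\dot\al}$, the functional equation $\dot\rho(\dot\eta_0(\dot\al))/\dot\rho(\dot\al)=\xi_r^{-\rho_{\eta_0}(\dot\al)}$ (using your own identity $\dot\eta_0\dot\nu=\xi_r^{-\rho_{\eta_0}(\dot\al)}\dot\nu\dot\eta_0$); your formula $\dot\rho(\dot\al)=\lambda(\dot\al_{[0]})\cdot c^{\rho_{\eta_0}(\dot\al)}$ does not satisfy this in general. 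Finally, the step ``absorb $c^{-\rd_2}$ into $\xi_{M_1}^{-\rd_1}$'' conflates the two variables; a rescaling of $t_2$ changes the second automorphism as well, so one must control both simultaneously. The fact that the paper falls back on a case-by-case table comparison in the twisted case is a hint that a uniform direct construction along your lines, while not impossible, is not as immediate as your outline suggests.
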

\begin{proof}By \cite[Theorem 10.1.1]{ABP}, there exist finite order automorphisms $\dot\tau$ and $\dot\sigma$ such that
$\CL(\bar\fg,\bar\eta)\cong \CL(\dg,\dot\tau,\dot\sigma)$. Up to conjugation, we may assume that $\dot\sigma$
is of the form $\dot\rho\,\dot\vartheta$, where $\dot\rho\in \mathrm{Hom}(\dot{Q},\C^\times)$ and
$\dot\vartheta$ is a diagram automorphism of $\dg$ such that $\dot\rho\,\dot\vartheta=\dot\vartheta\,\dot\rho$.
If $\fg$ is of untwisted type, then it follows from the proof of \cite[Theorem 10.1.1]{ABP} that one may take $\dot\vartheta={\rm{id}}=\dot\nu$.
If $\fg$ is of twisted type,  then  by comparing the classification results (the relative and absolute types) given in  \cite[Tables 3]{ABP} and
\cite[Table 9.2.4]{GP-torsors}, we find out that the diagram automorphism $\dot\vartheta$ can also be taken to be $\dot\nu$.
\end{proof}

Notice that the automorphisms $\dot\rho$ and $\dot\nu$ satisfy the assumptions given in
\cite[Theorem 5.1]{ABP-covering2}. Thus, there is an automorphism $\varphi$ of $\mathcal L(\dg,{\rm{id}})$ such that
\begin{align}\label{autpsi}
\varphi\circ (\xi_{M_2}^{-\rd_2}\ot \dot\nu\dot\rho)\circ \varphi^{-1}=\xi_{M_2^2}^{-\rd_2}\ot \dot\nu.\end{align}
Denote by $\tau'$ the automorphism
\[\varphi\circ (1^{-\rd_2}\ot \dot\tau)\circ \varphi^{-1}\]
 of $\mathcal L(\dg,{\rm{id}})$. Then $\tau'$ commutes with the automorphism $\xi_{M_2^2}^{-\rd_2}\ot \dot\nu$, and hence preserves the Lie
algebra $\mathcal L_{M_2^2}(\dg,\dot\nu)$. Write $\tau''$ for the restriction of $\tau'$ on $\mathcal L_{M_2^2}(\dg,\dot\nu)$, and $\bar\tau$ for the automorphism of $\bar\fg$ induced from $\tau''$ via the isomorphism $\bar\fg\cong \CL_{M_2^2}(\dg,\dot\nu)$.
So by definition we have that
\begin{equation}\begin{split}\label{eq:rhonutau}
&\CL(\dg,{\rm{id}},{\rm{id}})^{\xi_{M_1}^{-\rd_1}\ot 1^{-\rd_2}\ot \dot\tau,1^{-\rd_1}\ot \xi_{M_2}^{-\rd_2}\ot \dot\rho\dot\nu}
\cong \CL(\CL(\dg,{\rm{id}}),{\rm{id}})^{\xi_{M_1}^{-\rd_1}\ot \tau', 1^{-\rd_1}\ot (\xi_{M_2^2}^{-\rd_2}\ot \dot\nu)}\\
&\cong \CL(\CL_{M_2^2}(\dg,\dot\nu),{\rm{id}})^{\xi_{M_1}^{-\rd_1}\ot \tau''}
\cong \CL_{M_1}(\bar\fg,\bar\tau).
\end{split}\end{equation}

\begin{lemt}\label{lem:iso-last}
One has that
\begin{align*}\uce(\CL(\CL(\dg,{\rm{id}}),{\rm{id}}))^{\uce(\xi_{M_1}^{-\rd_1}\ot \tau'), \uce(1^{-\rd_1}\ot (\xi_{M_2^2}^{-\rd_2}\ot \dot\nu))}
  \cong\uce\(\CL(\CL_{M_2^2}(\dg,\dot\nu),{\rm{id}})\)^{ \uce(\xi_{M_1}^{-\rd_1}\ot\tau'')}.
\end{align*}
\end{lemt}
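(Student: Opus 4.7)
The plan is to transport the inner (second) fixed-point condition on the right-hand side into an image condition via a natural embedding, and then apply Lemma \ref{conj}(a). More precisely, since $\CL_{M_2^2}(\dg,\dot\nu)$ is the fixed subalgebra of $\CL(\dg,{\rm{id}})=\C[t_2^{\pm 1}]\ot\dg$ under $\xi_{M_2^2}^{-\rd_2}\ot\dot\nu$, and $\tau''$ is by construction the restriction of $\tau'$ to this fixed subalgebra, the natural embedding
\[
\tilde{j}:=1^{-\rd_1}\ot j:\CL\bigl(\CL_{M_2^2}(\dg,\dot\nu),{\rm{id}}\bigr)\hookrightarrow \CL\bigl(\CL(\dg,{\rm{id}}),{\rm{id}}\bigr),
\]
where $j:\CL_{M_2^2}(\dg,\dot\nu)\hookrightarrow\CL(\dg,{\rm{id}})$ is the inclusion, intertwines $\xi_{M_1}^{-\rd_1}\ot\tau''$ with $\xi_{M_1}^{-\rd_1}\ot\tau'$, and moreover its image is exactly the fixed subalgebra of $\CL(\CL(\dg,{\rm{id}}),{\rm{id}})$ under $1^{-\rd_1}\ot(\xi_{M_2^2}^{-\rd_2}\ot\dot\nu)$.

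Next I would apply $\uce$ to $\tilde{j}$. Imitating the centre-level computation used in the proof of Lemma \ref{lem:period-uce2}, the induced map $\uce(\tilde{j})$ acts on the centre $\CK_{1,r}$-piece by inflating the $t_2$-exponents, so $\uce(\tilde{j})$ is injective. By Proposition \ref{uce}(b) combined with the splitting identity \eqref{split}, the image of $\uce(\tilde{j})$ coincides with
\[
\uce\bigl(\CL(\CL(\dg,{\rm{id}}),{\rm{id}})\bigr)^{\uce(1^{-\rd_1}\ot(\xi_{M_2^2}^{-\rd_2}\ot\dot\nu))}.
\]

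Now I would apply Lemma \ref{conj}(a) to $\uce(\tilde{j})$ with the single automorphism $\uce(\xi_{M_1}^{-\rd_1}\ot\tau'')$ on the source and $\uce(\xi_{M_1}^{-\rd_1}\ot\tau')$ on the target. The conclusion is that $\uce(\tilde j)$ carries the fixed subalgebra $\uce(\CL(\CL_{M_2^2}(\dg,\dot\nu),{\rm{id}}))^{\uce(\xi_{M_1}^{-\rd_1}\ot\tau'')}$ isomorphically onto the intersection of the $\uce(\xi_{M_1}^{-\rd_1}\ot\tau')$-fixed subalgebra with $\mathrm{im}(\uce(\tilde j))$. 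Since that image is precisely the $\uce(1^{-\rd_1}\ot(\xi_{M_2^2}^{-\rd_2}\ot\dot\nu))$-fixed subalgebra, the intersection is exactly the joint fixed subalgebra displayed on the right-hand side of the lemma, yielding the desired isomorphism.

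The main obstacle I expect is the identification of the image of $\uce(\tilde j)$ with the $\uce$-fixed subalgebra: one must make sure that the splitting \eqref{split} is applicable in the present two-variable situation with one factor already in loop form, and that the action of $\uce(\tilde j)$ on the centre is correctly understood so as to rule out stray elements of the fixed subalgebra lying outside the image. Once this image computation is in hand, the remainder of the argument is a formal application of Lemma \ref{conj}(a) paralleling the proofs of Lemmas \ref{lem:period-uce} and \ref{lem:period-uce2}.
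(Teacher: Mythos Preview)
Your proposal is correct and follows essentially the same route as the paper: both arguments identify $\uce(\CL(\CL_{M_2^2}(\dg,\dot\nu),{\rm{id}}))$ with the fixed subalgebra of $\uce(\CL(\dg,{\rm{id}},{\rm{id}}))$ under $\uce(1^{-\rd_1}\ot(\xi_{M_2^2}^{-\rd_2}\ot\dot\nu))$ via \eqref{split}, and then check that $\uce(\xi_{M_1}^{-\rd_1}\ot\tau')$ restricts there to $\uce(\xi_{M_1}^{-\rd_1}\ot\tau'')$---the paper does this directly by the covering/uniqueness statement in Proposition~\ref{uce}(b), while you package the same computation through $\uce(\tilde{j})$ and Lemma~\ref{conj}(a). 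One small inaccuracy: since $\tilde{j}$ is the plain inclusion (not a rescaling like the map $j_b$ of Lemma~\ref{lem:period-uce2}), the induced map $\uce(\tilde{j})$ acts on the centre as the inclusion $\CK_{1,M_2^2}\hookrightarrow\CK_{1,1}$ rather than by ``inflating'' $t_2$-exponents, but this does not affect your injectivity claim or the rest of the argument.
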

\begin{proof}
Due to the isomorphisms
\begin{align*}
\uce\(\CL(\dg,{\rm{id}},{\rm{id}})\)^{\uce(1^{-\rd_1}\ot \xi_{M^2_2}^{-\rd_2}\ot \dot\nu)}
\cong \uce(\CL_{1,M_2^2}(\dg,{\rm{id}},\dot\nu))\cong \uce(\CL(\CL_{M_2^2}(\dg,\dot\nu),{\rm{id}})),
\end{align*}
it  suffices to show that the restriction of $\uce(\xi_{M_1}^{-\rd_1}\ot \tau')$ on $\uce(\CL(\CL_{M_2^2}(\dg,\dot\nu),{\rm{id}}))$
coincides with $\uce(\xi_{M_1}^{-\rd_1}\ot \tau'')$.
Set $\fk=\CL(\dg,{\rm{id}},{\rm{id}})$ and $\fk_0=\CL_{1,M_2^2}(\dg,{\rm{id}},\dot\nu)=\CL(\CL_{M_2^2}(\dg,\dot\nu),{\rm{id}})$.
Then by definition one has that
\begin{align*}
&\mathfrak{u}_\fk\circ \uce(\xi_{M_1}^{-\rd_1}\ot \tau')=(\xi_{M_1}^{-\rd_1}\ot \tau')\circ \mathfrak{u}_\fk,\quad
\mathfrak{u}_{\fk_0}\circ \uce(\xi_{M_1}^{-\rd_1}\ot \tau'')=
(\xi_{M_1}^{-\rd_1}\ot \tau'')\circ \mathfrak{u}_{\fk_0},\\
&\mathfrak{u}_{\fk_0}=\mathfrak{u}_\fk\mid_{\uce(\fk_0)=\wh{\CL}_{1,M_2^2}(\dg,{\rm{id}},\dot\nu)}
\quad\te{and}\quad
\xi_{M_1}^{-\rd_1}\ot \tau''=\xi_{M_1}^{-\rd_1}\ot \tau'\mid_{\fk_0}.
\end{align*}
This implies that the restriction of $\uce(\xi_{M_1}^{-\rd_1}\ot \tau')$ on $\uce(\fk_0)$
 covers $\xi_{M_1}^{-\rd_1}\ot \tau''$.
Combining with Proposition \ref{uce} (b), we complete the proof.
 \end{proof}

Now, by  using Lemma \ref{lem:iso-last}, Lemma \ref{conj} (b) and  Lemma \ref{lem:period-uce2},
 we can extend the  isomorphisms given in \eqref{eq:rhonutau}
 to their universal central extensions as follows:
\begin{equation}\begin{split}\label{eq:temp-tss1}
&\quad \uce\(\CL(\dg,{\rm{id}},{\rm{id}})\)^{
    \uce(\xi_{M_1}^{-\rd_1}\ot 1^{-\rd_2}\ot \dot\tau),\uce(1^{-\rd_1}\ot \xi_{M_2}^{-\rd_2}\ot \dot\rho\dot\nu)
}\\
& \cong\uce(\CL(\CL(\dg,{\rm{id}}),{\rm{id}}))^{\uce(\xi_{M_1}^{-\rd_1}\ot \tau'), \uce(1^{-\rd_1}\ot (\xi_{M_2^2}^{-\rd_2}\ot \dot\nu))}\\
&
  \cong\uce\(\CL(\CL_{M_2^2}(\dg,\dot\nu),{\rm{id}})\)^{ \uce(\xi_{M_1}^{-\rd_1}\ot\tau'')}\\
  &\cong\uce\(\CL\(\bar\fg,{\rm{id}}\)\)^{\uce(\xi_{M_1}^{-\rd_1}\ot \bar\tau)}.
  \end{split}
\end{equation}

Combining Lemma \ref{iso1} with \eqref{eq:rhonutau}, we get the isomorphism
\[\CL_{M_1}(\bar\fg,\bar\tau)\cong\CL_M(\bar\fg,\bar\eta).\]
By using
\cite[Theorem 10.1.1 and Corollary 10.1.5]{ABP}, we get that $\bar\tau$ is of the first kind.
Moreover, it follows from \cite[Theorem 13.2.3]{ABP} that the diagram
automorphism $\bar{p}(\bar\tau)$ is conjugate to $\bar{p}(\bar\eta)$.
Thus, one can conclude from Lemma \ref{lem:period-uce} and Lemma \ref{prop:autdec}  that
\begin{align*}
\uce&(\CL(\bar\fg,{\rm{id}}))^{\uce(\xi_{M_1}^{-\rd_1}\ot \bar\tau)}
\cong \uce(\CL(\bar\fg,{\rm{id}}))^{\uce(\xi_{M_1}^{-\rd_1}\ot \bar p(\bar\tau))}\\
&\cong \uce(\CL(\bar\fg,{\rm{id}}))^{\uce(\xi_{M_1}^{-\rd_1}\ot \bar p(\bar\eta))}
\cong \uce(\CL(\bar\fg,{\rm{id}}))^{\uce(\xi_{M}^{-\rd_1}\ot \bar\eta)}.
\end{align*}
Combining with
\eqref{eq:temp-tss1}, we get that
\[ \uce(\CL(\bar\fg,{\rm{id}}))^{\uce(\xi_{M}^{-\rd_1}\ot \bar\eta)}\cong
\uce\(\CL(\dg,{\rm{id}},{\rm{id}})\)^{
    \uce(\xi_{M_1}^{-\rd_1}\ot 1^{-\rd_2}\ot \dot\tau),\uce(1^{-\rd_1}\ot \xi_{M_2}^{-\rd_2}\ot \dot\rho\dot\nu)
}\]
is central closed.
This complets the proof of Theorem \ref{central}.

\section{Proof of Theorem \ref{main2}}\label{sec:proof-MRY}
%This section is devoted to a proof of Theorem \ref{main2}.
Throughout this section,
we assume that the diagram automorphism $\mu$ is non-transitive.

\subsection{Root system of $\wh\fg[\mu]$}
In this subsection, we determine the non-isotropic roots in $\wh\fg[\mu]$.
As indicated in \cite[Section 14]{ABP}, this affords an explicit realization of
all nullity %dimension
$2$ reduced extended affine root systems given by Saito (\cite{Saito-EALA}).

Recall that $V=\R\ot_\Z Q$, and we extend $\mu$ to a linear automorphism on $V$ by $\R$-linearity.
We denote by $V_\mu$ the fixed point subspace of $V$ under the isometry $\mu$,
$\pi_\mu:V\rightarrow V_\mu$ the canonical projection of $V$ onto $V_\mu$, and
$\wh{Q}_\mu$ the abelian group $\pi_\mu(Q)\times \Z$.

Define a $Q\times \Z$-grading on $\wh\fg=\oplus_{(\al,n)\in Q\times \Z}\wh\fg_{\al,n}$ by letting
\[t_1^{n_1}\ot x\in  \wh\fg_{\al,n_1},\quad \rk_1\in \wh\fg_{0,0},\quad t_1^{n_1}t_2^{n_2}\rk_1\in \wh\fg_{n_2\delta_2,n_1},
\]
where $x\in \fg_\al$, $\al\in \Delta$, $n_1\in \Z$ and $n_2\in r\Z^\times$.
The above grading induces a $\wh{Q}_\mu$-grading $\wh\fg[\mu]=\oplus_{(\al,n)\in \wh{Q}_\mu}
\wh\fg[\mu]_{\al,n}$ on $\wh\fg[\mu]$ such that for any $(\al,n)\in \wh{Q}_\mu$,
\[\wh\fg[\mu]_{\al,n}=\{x\in \wh\fg[\mu]\cap \wh\fg_{\beta,n}\mid \beta\in Q, \pi_\mu(\beta)=\al\}.
\]
Notice that this is the unique $\wh{Q}_\mu$-grading on $\wh\fg[\mu]$ such that
\begin{align}\label{fgmugrading}
t_1^n\ot e_{i(n)}^\pm\in \wh\fg[\mu]_{\pm\check\al_i,n},\quad t_1^n\ot \al_{i(n)}^\vee\in \wh\fg[\mu]_{0,n},\quad \rk_1\in
\wh\fg[\mu]_{0,0},
\end{align}
for $i\in I$ and $n\in \Z$.

Consider now the following subsets of $\wh{Q}_\mu$:
\begin{align*}&\Phi_\mu=\{(\al,n)\in \wh Q_\mu \mid \wh\fg[\mu]_{\al,n}\ne 0\},\\
&\wh{Q}_\mu^\times=\{(\al,n)\in \wh{Q}_\mu\mid (\al,\al)\ne 0\},\\
&\Phi_\mu^\times=\Phi_\mu\cap\, \wh Q_\mu^\times=
\set{(\al,n)\in\Phi_\mu}{\(\al,\al\)\ne 0}.
\end{align*}
It obvious that $\Phi_\mu\subset \pi_\mu(\Delta)\times \Z$ and so we have
\begin{align}\label{isorootsys1}
\Phi_\mu^\times\subset \pi_\mu(\Delta)^\times\times \Z,
\end{align}
where $\pi_\mu(\Delta)^\times=\{\al\in \pi_\mu(\Delta)|\(\al,\al\)\ne 0\}$.
By definition, for each $i\in I$ we have that $\check{\al}_i=\pi_\mu(\al_i)$.
In addition, for $i\in I$ with $s_i=2$, we have that
 $2\check{\al}_i=\pi_\mu(\al_i+\al_{\mu(i)})$. This shows that
\begin{align}\label{eq:checkpre}
k_i\check{\al}_i\in \pi_\mu(\Delta)^\times,\quad 1\le k_i\le s_i,\ i\in \check{I}.
\end{align}

For $i\in I$, we let $N_i$ be the cardinality of the orbit $\mathcal O(i)$ in $I$ and set $d_i=\frac{N}{N_i}$.
Denote by $\check{W}$  the Weyl group
of the folded GCM $\check{A}$. Then we have the following description of the set $\Phi_\mu^\times$.
\begin{prpt}\label{prop:rootsys1} One has that
\begin{align}
\Phi_\mu^\times=\{(\check{w}(k_i\check{\al}_i),p)\mid \check{w}\in \check{W}, i\in \check{I}, 1\le k_i\le s_i, p\in (k_i-1)d_i+k_id_i\Z\},
\end{align}
and that
\begin{align}
\dim \wh\fg[\mu]_{\al,p}=1,\quad \forall\ (\al,p)\in \Phi_\mu^\times.
\end{align}
\end{prpt}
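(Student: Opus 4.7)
\bigskip\noindent\textbf{Proof plan.} The strategy is to reduce the description of $\Phi_\mu^\times$ to explicit computations at the ``simple'' roots $k_i\check\al_i$ (with $i\in\check I$ and $1\le k_i\le s_i$), and then propagate to arbitrary elements via an action of the folded Weyl group $\check W$ on $\wh\fg[\mu]$.

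First I would compute $\wh\fg[\mu]_{k_i\check\al_i,p}$ directly. By construction, this weight space is the $\wh\mu$-fixed subspace of $\bigoplus_{\beta:\,\pi_\mu(\beta)=k_i\check\al_i}\wh\fg_{\beta,p}$. Since $(k_i\check\al_i,k_i\check\al_i)\ne 0$ rules out $\beta\in\Z\delta_2$, only $\beta$ in the $\mu$-orbit of $\al_i$ (when $k_i=1$) or the root $\al_i+\al_{\mu(i)}$ (when $k_i=2$, which forces $s_i=2$) contribute, and in each case the corresponding $\wh\fg_{\beta,p}$ is $1$-dimensional. Lemma \ref{lem:wh-mu-N-period} then tells me that $\wh\mu$ acts on $t_1^p\ot e_j^\pm$ ($j\in\mathcal O(i)$) by $\xi^{-p}$ times the orbit permutation, and on $t_1^p\ot [e_i^+,e_{\mu(i)}^+]$ by $-\xi^{-p}$ (the minus sign coming from $\mu([e_i^+,e_{\mu(i)}^+])=-[e_i^+,e_{\mu(i)}^+]$ when $\mu^2(i)=i$ and $a_{i\mu(i)}=-1$). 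A short character-sum calculation then shows that the invariant subspace is $1$-dimensional exactly when $d_i\mid p$ for $k_i=1$, and when $p\equiv d_i\pmod{2d_i}$ for $k_i=2$, i.e.\ precisely when $p\in(k_i-1)d_i+k_id_i\Z$, with spanning vector $t_1^p\ot e_{i(p)}^\pm$ (respectively $t_1^p\ot [e_i^+,e_{\mu(i)}^+]$).

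Next I would construct a $\check W$-action on $\wh\fg[\mu]$ compatible with its $\wh Q_\mu$-grading. The previous step specialised at $p=0$ and $k_i=1$ provides, for each $i\in\check I$, an $\mathfrak{sl}_2$-triple inside $\wh\fg[\mu]$ generated by suitable scalar multiples of $t_1^0\ot e_{i(0)}^\pm$ and $t_1^0\ot \al_{i(0)}^\vee$. Exponentiating these ad-nilpotent elements yields an automorphism $\check s_i$ of $\wh\fg[\mu]$ that preserves the $\Z$-grading by $p$ and implements the simple reflection $s_{\check\al_i}$ on the $\wh Q_\mu$-component. Checking the braid relations of the affine Weyl group $\check W$ then produces the required $\check W$-action, which preserves $\Phi_\mu^\times$ together with the dimensions of its weight spaces.

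With both ingredients in hand the proposition follows. For any $(\al,p)\in\Phi_\mu^\times$, \eqref{isorootsys1} gives $\al\in\pi_\mu(\Delta)^\times$; by the folding theory for affine Kac-Moody algebras applied to the affine GCM $\check A$, every real root of $\check A$ is $\check W$-conjugate to some $k_i\check\al_i$ with $i\in\check I$ and $1\le k_i\le s_i$, where $k_i=2$ occurs only when $s_i=2$ (the short-root doubling characteristic of $A_{2\ell}^{(2)}$-type). Applying a suitable $\check w^{-1}$ from Step~2 reduces to the case analysed in Step~1, forcing $p\in(k_i-1)d_i+k_id_i\Z$ and $\dim\wh\fg[\mu]_{\al,p}=1$; combined with Step~1 in the other direction this gives the asserted equality and the multiplicity claim. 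The main obstacle I expect is verifying the local nilpotency of $\mathrm{ad}(t_1^0\ot e_{i(0)}^\pm)$ on the entire infinite-dimensional algebra $\wh\fg[\mu]$ required for Step~2; this rests on Serre-type identities internal to $\wh\fg$ inherited from the Kac-Moody structure of $\fg$. A secondary subtlety is the bookkeeping in the $s_i=2$ case---confirming that the two progressions $p\in d_i\Z$ and $p\in d_i+2d_i\Z$ are both covered without overlap contamination from imaginary contributions in $\pi_\mu(Q)$.
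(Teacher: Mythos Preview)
Your proposal is correct and follows essentially the same route as the paper: compute the weight spaces explicitly at $k_i\check\al_i$ via the character-sum argument you describe, and then transport along the $\check W$-action coming from the adjoint action of the copy $\check\fg$ of the folded affine Kac-Moody algebra sitting inside $\wh\fg[\mu]$ (the paper's Corollary~\ref{checkfg}), together with the description $\pi_\mu(\Delta)^\times=\check W\cdot\{k_i\check\al_i\}$ of Lemma~\ref{lem:rootsys0}. The paper packages your Step~2 slightly more economically by invoking integrability of the $\check\fg$-module $\wh\fg[\mu]$ rather than building an explicit $\check W$-action (so no braid-relation check is needed), and it resolves the local-nilpotency obstacle you flagged exactly via the Serre relations, which hold because the would-be roots $(1-\check a_{ij})\check\al_i+\check\al_j$ lie outside $\pi_\mu(\Delta)^\times$ (Corollary~\ref{classicalsr}).
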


Before proving Proposition \ref{prop:rootsys1},  we first give a characterization of the set $\pi_\mu(\Delta)^\times$.
This result  is a slight generalization of \cite[Proposition 12.1.16]{ABP}.
\begin{lemt}\label{lem:rootsys0} One has that
\begin{eqnarray}
\pi_\mu(\Delta)^\times = \{\check{w}(k_i\check{\al}_i)\mid \check{w}\in \check{W}, i\in \check{I}, 1\le k_i\le s_i\}.
\end{eqnarray}
\end{lemt}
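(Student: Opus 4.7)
The plan is to establish the two inclusions separately, with the forward inclusion being essentially a Weyl-group stability statement and the reverse one reducing to the analogous finite-type result of Allison-Berman-Pianzola.

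For the inclusion $\supset$, the input \eqref{eq:checkpre} already provides $k_i\check{\al}_i\in\pi_\mu(\Delta)^\times$ for all $i\in\check I$ and $1\le k_i\le s_i$, so it remains to show that $\check W$ preserves $\pi_\mu(\Delta)^\times$. I would do this by realizing each simple reflection $\check s_i\in\check W$ as the restriction to $V_\mu$ of a $\mu$-invariant element of the Weyl group $W$ of $\fg$: when $s_i=1$ (case (a) of Lemma \ref{lem:defsi}) the $\al_p$, $p\in\mathcal O(i)$, are pairwise orthogonal, so the product $\prod_{p\in\mathcal O(i)}s_{\al_p}\in W$ is well-defined, $\mu$-invariant, and its restriction to $V_\mu$ is the reflection at $\check{\al}_i$; when $s_i=2$ (case (b)) the element $\al_i+\al_{\mu(i)}$ is itself a real root (because $a_{i\mu(i)}=-1$) and $\mu$-invariant, so $s_{\al_i+\al_{\mu(i)}}\in W^\mu$ restricts to $\check s_i$. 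Since $W$ permutes $\Delta$ and these specific elements commute with $\mu$ (hence with $\pi_\mu$), they permute $\pi_\mu(\Delta)$ and preserve the bilinear form, giving $\check W\cdot\pi_\mu(\Delta)^\times=\pi_\mu(\Delta)^\times$.

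For the inclusion $\subset$, take $\beta=\pi_\mu(\al)\in\pi_\mu(\Delta)^\times$ with $\al\in\Delta$. Because $(\delta_2,\cdot)=0$ in the form \eqref{defbi} and $\mu(\delta_2)=\delta_2$, the bilinear form on $V_\mu$ is the one descended from the restriction of $(\cdot,\cdot)$ to the ``horizontal'' part of $V$, and $(\pi_\mu(\al),\pi_\mu(\al))\ne 0$ forces $\al$ to be a real root whose projection depends only on its image in $Q/\Z\delta_2$. This reduces the claim to the statement that the projection to $V_\mu$ of the horizontal real roots of $\fg$ is exactly the $\check W$-orbit of $\{k_i\check{\al}_i\}$, which is precisely the finite-type content of \cite[Proposition 12.1.16]{ABP} applied to the underlying finite root system $\dot\Delta$ (or to the folded data when $\dot\nu\ne\mathrm{id}$).

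The main obstacle I anticipate is the twisted case, where $\mu$ interacts nontrivially with $\dot\nu$: namely $X_n^{(r)}=A_{2\ell-1}^{(2)}$ with $\mu=(0,1)$ and $X_n^{(r)}=D_{\ell+1}^{(2)}$ with $\mu$ as in the proof of Lemma \ref{lem:dmu}. In these cases, the ``horizontal part'' is not simply the simple Lie algebra $\dg$ but rather its $\dot\nu$-fixed subalgebra, and the orbit structure of $\Delta^\times$ under $\mu$ mixes with the $\dot\nu$-orbits. I would finish the argument by checking these two families by hand, using the explicit description of $\dot\mu$ and $\rho_\mu$ given in Lemma \ref{lem:dmu} together with Proposition \ref{charmu} to identify $\pi_\mu(\al+n\delta_2)$ with $\pi_\mu(\al)$ (modulo $\delta_2$) and reduce once more to the finite ABP statement.
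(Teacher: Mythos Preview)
Your argument for $\supset$ is correct and is exactly what the paper does: realize each simple reflection $\check s_i$ as the restriction to $V_\mu$ of a $\mu$-equivariant element of $W$ (namely $\prod_{p\in\mathcal O(i)}r_{\al_p}$ when $s_i=1$ and $r_{\al_i+\al_{\mu(i)}}=r_{2\check\al_i}$ when $s_i=2$), deduce that $\check W$ preserves $\pi_\mu(\Delta)$ and the form, and combine with \eqref{eq:checkpre}.

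Your argument for $\subset$ has a real gap. The reduction to a ``finite-type'' instance of \cite[Proposition~12.1.16]{ABP} does not work as stated. First, that proposition is already formulated in the present affine setup (the paper calls the current lemma a \emph{slight generalization} of it, the new content being the $k_i=2$ roots); it is not a statement about $\dot\Delta$. Second, there is in general no $\mu$-stable finite sub-root-system to which one could reduce: $\mu$ need not fix the affine node (e.g.\ a non-transitive rotation of the $A_\ell^{(1)}$ diagram when $\ell+1$ is composite, or the automorphism of $D_\ell^{(1)}$ swapping nodes $0$ and $1$), so the ``horizontal real roots'' are not $\mu$-stable. Equivalently, the twist $\rho_\mu$ in Lemma~\ref{lem:dmu} makes $\pi_\mu$ mix the $\dot Q_{[0]}$- and $\delta_2$-directions, and after your reduction one is still left with matching the precise $\delta_2$-translates in $\pi_\mu(\Delta^\times)$ against those in the $\check W$-orbit---an affine statement of the same strength as the lemma itself. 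The two twisted families you single out for hand-checking are therefore not the only obstacle.

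The paper's argument for $\subset$ avoids all of this by a direct height induction inside $\pi_\mu(Q)$: write $\beta=\sum_{i\in\check I}n_i\check{\al}_i$ (the $n_i$ all of one sign), pick $i$ with $(\check{\al}_i,\beta)>0$ and $n_i>0$, and apply $r_{\check{\al}_i}$. Either the height drops and induction applies, or $\beta=q\check{\al}_i$ for some $q>0$; in the latter case any preimage $\al\in\Delta$ is supported on $\mathcal O(i)$, and Lemma~\ref{lem:defsi} forces $q\le s_i$. This is uniform in the untwisted and twisted cases and needs no separate analysis.
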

\begin{proof} For convenience, we set
\[\check{\Delta}^{\mathrm{en}}=\{\check{w}(k_i\check{\al}_i)\mid \check{w}\in \check{W}, i\in \check{I}, 1\le k_i\le s_i\}.\]
We first show that
\begin{eqnarray}\label{eq:check0}
\check{W}(\pi_\mu(\Delta))\subset \pi_\mu(\Delta).
\end{eqnarray}
Let $r_{\check{\al}_i}$, $i\in \check{I}$ denote the reflections associated to $\check{\al}_i$. Note that
 the Weyl group $\check{W}$ is generated by these reflections.
Thus we only need to show that
\begin{eqnarray}\label{eq:check1}
r_{\check{\al}_i}(\pi_\mu(\Delta))\subset\pi_\mu(\Delta),\ i\in\check{I}.
\end{eqnarray}
If $s_i=1$, it is shown in the proof of \cite[Proposition 12.1.16]{ABP} that for each $\alpha\in \Delta$, the following relation holds true
\begin{eqnarray*}
&&r_{\check{\al}_i}(\pi_\mu(\al))=\pi_\mu\(\(\prod_{p\in\mathcal{O}(i)}r_{\al_p}\)(\al)\)\in \pi_\mu(\Delta).
\end{eqnarray*}
If $s_i=2$, then $2\check{\al}_i=\alpha_i+\alpha_{\mu(i)}\in\Delta$ as $ a_{i,\mu(i)}=-1$.
Note that $\pi_\mu(\check{\al}_i)=\check{\al}_i$ and hence
$(\check{\al}_i,\pi_\mu(\al))=(\check{\al}_i,\al)$ for all $\alpha\in \Delta$. This implies
that
\begin{eqnarray*}
&&r_{\check{\al}_i}(\pi_\mu(\al))=\pi_\mu(\al)-2\frac{(\check{\al}_i,\pi_\mu(\al))}{
(\check{\al}_i,\check{\al}_i)}\check{\al}_i
=\pi_\mu\left(\al-2\frac{(\check{\al}_i,\al)}{
(\check{\al}_i,\check{\al}_i)}\check{\al}_i\right)\\
&&\quad\quad=\pi_\mu\left(\al-2\frac{(\check{\al}_i,\al)}{
(2\check{\al}_i,2\check{\al}_i)}2\check{\al}_i\right)=\pi_\mu(r_{2\check{\al}_i}(\al))\in\pi_\mu(\Delta).
\end{eqnarray*}
Thus we complete the proof of the assertion \eqref{eq:check1} and hence
the assertion \eqref{eq:check0}.  Now, as the reflections preserve the bilinear form $(\ ,\ )$,
we have that
\begin{eqnarray*}
\check{W}(\pi_\mu(\Delta)^\times)\subset \pi_\mu(\Delta)^\times.
\end{eqnarray*}
This together with \eqref{eq:checkpre} gives that
\begin{eqnarray*}
\check{\Delta}^{\text{en}}
    \subset \check{W}(\pi_\mu(\Delta)^\times)\subset\pi_\mu(\Delta)^\times.
\end{eqnarray*}

For the reverse inclusion, observe first that any non-zero element $\beta\in\pi_\mu(\Delta)$ can be written uniquely in the form
$\beta=\sum_{i\in\check{I}}n_i\check{\al}_i$, where the $n_i$ are either all non-negative integers
 or all non-positive integers.
Set $\text{ht}\,\beta=\sum_{i\in\check{I}}n_i$. Assume that $\beta\in\pi_\mu(\Delta)^\times$.
We then show that $\beta\in\check{\Delta}^{\mathrm{en}}$ by using induction on $\text{ht}\,\beta$.
Without loss of generality, we may assume that  $\text{ht}\,\beta>0$.
Since $(\beta, \beta)>0$, there are some $i\in\check{I}$ such that $(\beta,\check{\al}_i)>0$ and that
$n_i>0$.
If $r_{\check{\al}_i}(\beta)$ is positive, then we are done by the induction hypothesis.
If $r_{\check{\al}_i}(\beta)$ is negative, then $\beta=q\check{\al}_i$ for some positive integer $q$.
This implies that $\beta=\pi_\mu(\al)$  for some
\[\al=\sum_{p\in\mathcal{O}(i)}m_p\al_p\in\Delta\ \text{with}\ \sum_{p\in\mathcal{O}(i)}m_p=q.\]
If $s_i=1$, then $q$ must equal to $1$ as all $\al_p$, $p\in\mathcal{O}(i)$ are pairwise orthogonal.
If $s_i=2$, then $q$ can be 1 or 2, as $|\mathcal O(i)|=2$ and $a_{i\mu(i)}=-1$.
This completes the proof. % of lemma.
\end{proof}
As a by-product of Lemma \ref{lem:rootsys0}, we have that
\begin{cort}\label{classicalsr} Let $i,j\in I$ with $\check{a}_{ij}\le 0$. Then for every $p\in \Z$, the elements
\[((1-\check{a}_{ij})\check{\al}_i+\check{\al}_j,p),\ \((s_i+1)\check{\al}_i,p\)\]
are contained in $\wh Q_\mu^\times$ but
not contained in $\Phi_\mu^\times$.
\end{cort}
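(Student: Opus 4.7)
The plan is to use the inclusion $\Phi_\mu^\times\subset \pi_\mu(\Delta)^\times\times\Z$ from \eqref{isorootsys1} and thereby reduce the claim to two statements in $V_\mu$: for each $\beta$ of the form $(1-\check{a}_{ij})\check{\al}_i+\check{\al}_j$ or $(s_i+1)\check{\al}_i$ one must check (a) $(\beta,\beta)\ne 0$ and (b) $\beta\notin \pi_\mu(\Delta)^\times$. Granted (a) and (b), the pair $(\beta,p)$ sits in $\wh{Q}_\mu^\times$ but outside $\Phi_\mu^\times$ for every $p\in\Z$.

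For (a), using the identity $2(\check{\al}_i,\check{\al}_j)=\check{a}_{ij}(\check{\al}_i,\check{\al}_i)$, one computes
\[\bigl((1-\check{a}_{ij})\check{\al}_i+\check{\al}_j,\ (1-\check{a}_{ij})\check{\al}_i+\check{\al}_j\bigr)=(1-\check{a}_{ij})(\check{\al}_i,\check{\al}_i)+(\check{\al}_j,\check{\al}_j),\]
and $((s_i+1)\check{\al}_i,(s_i+1)\check{\al}_i)=(s_i+1)^2(\check{\al}_i,\check{\al}_i)$. Since $\check{a}_{ij}\le 0$ and the $\check{\al}_k$ are real simple roots of the affine GCM $\check{A}$, both quantities are strictly positive.

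For (b) with $\beta=(s_i+1)\check{\al}_i$: assume $q\check{\al}_i=\pi_\mu(\al)$ for a positive root $\al\in\Delta$ and some $q\in\Z_+$. Since root coefficients are non-negative and the net contribution of each $\mu$-orbit other than $\mathcal{O}(i)$ must vanish, we have $\al=\sum_{p\in \mathcal{O}(i)}m_p\al_p$ with $\sum_p m_p=q$. Thus $\al$ is a positive root of the Kac-Moody subalgebra with GCM $(a_{pq})_{p,q\in\mathcal{O}(i)}$, which by Lemma \ref{lem:defsi} is of type $A_1^{N_i}$ (if $s_i=1$, forcing $q=1$) or of type $A_2$ (if $s_i=2$, with positive roots $\al_i,\al_{\mu(i)},\al_i+\al_{\mu(i)}$ and hence $q\in\{1,2\}$). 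Either way $q\le s_i$, so $(s_i+1)\check{\al}_i\notin\pi_\mu(\Delta)$.

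For (b) with $\beta=(1-\check{a}_{ij})\check{\al}_i+\check{\al}_j$: suppose for contradiction that $\beta\in \pi_\mu(\Delta)^\times$. By Lemma \ref{lem:rootsys0}, $\beta=\check{w}(k\check{\al}_l)$ for some $\check{w}\in\check{W}$, $l\in\check{I}$, $1\le k\le s_l$. If $k=2$, then $\beta=2\check{w}(\check{\al}_l)$ lies in $2\bigoplus_m\Z\check{\al}_m$, so every $\check{\al}_m$-coefficient of $\beta$ must be even; but its $\check{\al}_j$-coefficient equals $1$, a contradiction. If $k=1$, then $\beta$ is in the $\check{W}$-orbit of the simple root $\check{\al}_l$ of $\check{A}$, hence a real root of the affine Kac-Moody algebra associated with $\check{A}$; however the Serre relation $(\mathrm{ad}\, \check{e}_i)^{1-\check{a}_{ij}}(\check{e}_j)=0$ in that algebra forbids $(1-\check{a}_{ij})\check{\al}_i+\check{\al}_j$ from being a root, again a contradiction. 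The subtle step is the $k=1$ sub-case, which combines the Weyl-orbit description of real roots with the Serre relation in the Kac-Moody algebra attached to $\check{A}$; both are standard consequences of \cite{Kac-book}.
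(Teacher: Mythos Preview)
Your argument is correct and follows the same overall strategy as the paper: reduce via \eqref{isorootsys1} to showing that each $\beta$ is non-isotropic and lies outside $\pi_\mu(\Delta)^\times$, then invoke Lemma~\ref{lem:rootsys0}. The paper's proof is much terser: it only verifies non-isotropy of $(1-\check a_{ij})\check\al_i+\check\al_j$ (via the observation that if it were isotropic it would lie in the radical of the form, forcing $2(\check\al_i,\beta)/(\check\al_i,\check\al_i)=2-\check a_{ij}=0$), and treats the exclusion from $\pi_\mu(\Delta)^\times$ as an immediate consequence of the description in Lemma~\ref{lem:rootsys0} together with standard Kac--Moody root theory for $\check A$. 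Your proof makes those implicit steps explicit---the direct norm computation, the recapitulation of the $q\le s_i$ argument from the proof of Lemma~\ref{lem:rootsys0}, and the parity/Serre dichotomy for the $k=2$ and $k=1$ cases---so it is more self-contained, while the paper's version is shorter but leans more heavily on the reader's familiarity with affine root systems.
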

\begin{proof} By Lemma \ref{lem:rootsys0}, it suffices to show that if $\check{a}_{ij}\le 0$, then $(1-\check{a}_{ij})\check{\al}_i+\check{\al}_j$ is non-isotropic.
Otherwise,
\begin{align*}
0=2\frac{\(\check{\al}_i, (1-\check{a}_{ij})\check{\al}_i+\check{\al}_j\)}{\(\check{\al}_i, \check{\al}_i\)}
=2(1-\check{a}_{ij})+\check{a}_{ij}=2-\check{a}_{ij},
\end{align*}
a contradiction.
\end{proof}

Let $\check{\fg}$ be the subalgebra of $\wh\fg[\mu]$ generated by the elements $\al_{i(0)}^\vee, e_{i(0)}^\pm,\ i\in \check{I}$.
Then by applying Corollary \ref{classicalsr} we have that
\begin{cort}\label{checkfg} The Lie algebra $\check{\fg}$  is isomorphic to the derived subalgebra of the Kac-Moody algebra associated to $\check{A}$.
\end{cort}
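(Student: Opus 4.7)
My plan is to verify the Chevalley--Serre presentation of the derived Kac--Moody algebra $\fg'(\check{A})$ on the generators $\al_{i(0)}^\vee,\, e_{i(0)}^\pm$ ($i \in \check{I}$) of $\check{\fg}$, thereby obtaining a surjection $\phi\colon \fg'(\check{A}) \twoheadrightarrow \check{\fg}$, and then to argue that $\phi$ is injective via the standard ideal theorem for symmetrizable Kac--Moody algebras.

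For the Chevalley relations $[\al_{i(0)}^\vee,\al_{j(0)}^\vee]=0$, $[\al_{i(0)}^\vee, e_{j(0)}^\pm]=\pm\check{a}_{ij}\,e_{j(0)}^\pm$, and $[e_{i(0)}^+, e_{j(0)}^-]=\delta^\mu_{i,j}\,\al_{i(0)}^\vee$ (with $\delta^\mu_{i,j}=1$ if $j\in\mathcal{O}(i)$ and $0$ otherwise), I would expand the orbit sums in the definition of $x_{(0)}$ and compute directly via Lemma~\ref{lem:commutator}, using $\mu(\al_i^\vee)=\al_{\mu(i)}^\vee$, $\mu(e_i^\pm)=e_{\mu(i)}^\pm$, and the identity $\check{a}_{ij}=\sum_{k\in\Z_N}a_{i,\mu^k(j)}$ (which falls out of the definition of $\check{\al}_i$ and the folding compatibility of $(\,,\,)$). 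For the Serre relations $(\mathrm{ad}\,e_{i(0)}^\pm)^{1-\check{a}_{ij}}(e_{j(0)}^\pm)=0$ for $i\ne j$ with $\check{a}_{ij}\le 0$, the left-hand side lies in the weight space $\wh\fg[\mu]_{\pm((1-\check{a}_{ij})\check{\al}_i+\check{\al}_j),\,0}$, and this vanishes by Corollary~\ref{classicalsr} since the weight belongs to $\wh{Q}_\mu^\times\setminus\Phi_\mu^\times$.

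These two steps yield a surjective Lie algebra homomorphism $\phi\colon\fg'(\check{A})\twoheadrightarrow\check{\fg}$. To conclude $\phi$ is an isomorphism, I will invoke the standard fact that any ideal of $\fg'(\check{A})$ meeting none of the Chevalley generators is contained in the center, and that this center lies inside the Cartan $\bigoplus_{i\in\check{I}}\C H_i$. Since $\phi$ is evidently nonzero on the Chevalley generators, $\ker\phi$ lives in this Cartan, and injectivity of $\phi$ on the Cartan follows from the linear independence of the elements $\{\al_{i(0)}^\vee=\sum_{k\in\Z_N}\al_{\mu^k(i)}^\vee\}_{i\in\check{I}}$ in $\fg$, which is immediate because the orbits $\mathcal{O}(i)$, $i\in\check{I}$, form a partition of $I$ and $\{\al_i^\vee\}_{i\in I}$ are linearly independent.

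The main technical hurdle is the Serre verification, where one must identify the correct folded Serre exponent $1-\check{a}_{ij}$ and match it with a vanishing weight space; happily, Corollary~\ref{classicalsr} packages precisely this content into a single statement, avoiding any case-by-case analysis of the folded root system.
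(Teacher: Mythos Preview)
Your proposal is correct and follows essentially the same approach as the paper: both verify the Chevalley--Serre relations for $\check{A}$ on the generators $\al_{i(0)}^\vee,\,e_{i(0)}^\pm$, invoking Corollary~\ref{classicalsr} for the Serre relations. The paper's proof is terser, simply asserting that ``it suffices to check the defining relations'' and leaving the injectivity of the resulting surjection $\fg'(\check{A})\twoheadrightarrow\check{\fg}$ implicit; your explicit treatment of injectivity via the ideal/center argument and the linear independence of the $\al_{i(0)}^\vee$ fills in what the paper takes for granted.
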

\begin{proof} It suffices to check that the elements $\al_{i(0)}^\vee, e_{i(0)}^\pm,\ i\in \check{I}$ satisfy the defining relations
of the derived subalgebra of the Kac-Moody algebra associated to $\check{A}$.
Only the Serre relations $(\mathrm{ad}\ e_{i(0)}^\pm)^{1-\check{a}_{ij}}(e_{i(0)}^\pm)=0$ $(i\ne j)$ are non-trivial. But such
relations are immediate from Corollary \ref{classicalsr}.
\end{proof}

Now we are ready to complete the proof of Proposition \ref{prop:rootsys1}.
Using \eqref{isorootsys1} and Lemma \ref{lem:rootsys0}, we know that any element in $\Phi_\mu^\times$ has the form
\begin{align}\label{rerootele}(\check{w}(k_i\check{\al}_i),p),\quad \check{w}\in \check{W},\ 1\le k_i\le s_i,\ i\in \check{I},\ p\in \Z.\end{align}

 Regard $\wh\fg[\mu]$ as a module of the affine Kac-Moody algebra $\check{\fg}$ (Corollary \ref{checkfg})
via the adjoint action. Then it is integrable, and for each $p\in \Z$, the graded subspace $\wh\fg[\mu]_p$ of $\wh\fg[\mu]$
is a $\check{\fg}$-submodule, where
\begin{align*}
\wh\fg[\mu]_p=\bigoplus_{\check{\al}\in \pi_\mu(\Delta)}
\wh\fg[\mu]_{\check{\al},p}.\end{align*}
Using this and the standard $\mathfrak{sl}_2$-theory, we obtain that $(\check{w}(k_i\check{\al}_i),p)\in \Phi_\mu^\times$ if and only if
$(k_i\check{\al}_i,p)\in \Phi_\mu^\times$. Moreover, we have that $\dim \wh\fg[\mu]_{\check{w}(k_i\check{\al}_i),p}=
\dim \wh\fg[\mu]_{k_i\check{\al}_i,p}$.
So we only need to treat the case that $\check{w}=1$.

We first consider the case that $k_i=1$. Note that for each $i\in \check{I}$,
\[\wh\fg[\mu]_{\check{\al}_i,p}=\C t_1^p \ot e^+_{i(p)}=
\C \sum_{s\in\Z_{N_i}}\left(\sum_{k\in \Z_{d_i}}\xi_{d_i}^{-kr}\right)\xi^{-ps}t_1^p\ot e^+_{\mu^s(i)}.\]
This together with the fact
\[\sum_{k\in \Z_{d_i}}\xi_{d_i}^{-pk}\ne 0\Longleftrightarrow p\in d_i\Z\]
gives that $(\check{\al}_i,p)\in \Phi_\mu^\times$ if and only if $p\in d_i\Z$.
Next, for the case  $k_i=2$ (and hence $s_i=2$), we have that
\[\wh\fg[\mu]_{2\check{\al}_i,p}=\C t_1^p\ot [e^+_i,e^+_{\mu(i)}]_{(p)}.\]
This together with the fact
\[\mu\([e^+_i,e^+_{\mu(i)}]\)=[e^+_{\mu(i)},e_i^+]=-[e^+_i,e^+_{\mu(i)}].\]
gives that $(2\check{\al}_i,p)\in \Phi_\mu^\times$ if and only if $p\in d_i+N\Z$.
Therefore, we complete the proof of Proposition \ref{prop:rootsys1}.

\subsection{Proof of Theorem \ref{main2}}
%In this subsection, we give a  proof of  Theorem \ref{main2}.
We start with the following lemma.
\begin{lemt}\label{firsthom} The action
\begin{eqnarray*} c\mapsto \rk_1,\ h_{i,m}\mapsto t_1^m\ot \al_{i(m)}^\vee ,\ x_{i,m}^\pm\mapsto t_1^m\ot e^\pm_{i(m)},\ i\in I,\ m\in \Z,
\end{eqnarray*}
 determines (uniquely) a surjective Lie homomorphism from  $\mathcal{M}(\fg,\mu)$  to $\wh\fg[\mu]$.
\end{lemt}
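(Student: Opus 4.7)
\medskip

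\noindent\textbf{Proof proposal.}
The plan is to verify that the relations (T0)–(T6) defining $\mathcal{M}(\fg,\mu)$ all hold in $\wh\fg[\mu]$ under the prescribed assignment, and then deduce surjectivity from the fact that the image already contains all generators of $\wh\fg[\mu]$ listed in \eqref{genwhfgmu}.

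First I would address the purely formal relations. For (T0), a short computation using $x_{(m)}=\sum_{p\in\Z_N}\xi^{-pm}\mu^p(x)$ and the index shift $q=p+1$ shows that $(\mu(i))_{(m)}=\xi^{m} i_{(m)}$ at the level of both $\al_i^\vee$ and $e_i^\pm$, so (T0) holds on the nose. Relation (T1) is immediate because $\rk_1$ is central in $\wh\fg$ (and hence in its subalgebra $\wh\fg[\mu]$). For (T2), I would use that the Chevalley generators $\al_i^\vee$ commute in $\fg$, so by Lemma \ref{lem:commutator} the only surviving contribution is the central term coming from $\<\,\cdot\,,\,\cdot\,\>$; expanding $\al_{i(m)}^\vee=\sum_p\xi^{-pm}\al_{\mu^p(i)}^\vee$ and $\al_{j(n)}^\vee=\sum_q\xi^{-qn}\al_{\mu^q(j)}^\vee$ produces the double sum which, after replacing the index $q$ by $p+k$ and using $\xi^{p(m+n)}$ together with $\delta_{m+n,0}$, collapses to the right-hand side of (T2). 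The computations for (T3) and (T4) are analogous: one uses $[\al_i^\vee,e_j^\pm]=\pm a_{ij}e_j^\pm$ and $[e_i^+,e_j^-]=\delta_{ij}\al_i^\vee$ together with Lemma \ref{lem:commutator}, then reorganizes the resulting sums over $\Z_N\times\Z_N$ into a single sum indexed by $\Z_N$ by making the substitution that brings one of the $\mu$-powers to zero.

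For the Serre-type relations (T5) and (T6), I would argue by weight considerations using the $\wh Q_\mu$-grading \eqref{fgmugrading}. The iterated bracket $(\mathrm{ad}\, t_1^0\ot e_{i(0)}^\pm)^{1-\check a_{ij}}(t_1^m\ot e_{j(m)}^\pm)$ lies in the homogeneous component $\wh\fg[\mu]_{\pm((1-\check a_{ij})\check\al_i+\check\al_j),\,m}$; by Corollary \ref{classicalsr} the corresponding weight is in $\wh Q_\mu^\times$ but not in $\Phi_\mu^\times$, so this component is trivial, giving (T5). The same argument, now with the weight $((s_i+1)\check\al_i,\sum m_j)$ which is again excluded from $\Phi_\mu^\times$ by Corollary \ref{classicalsr}, proves (T6).

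Once (T0)–(T6) are verified, the assignment extends uniquely to a Lie algebra homomorphism $\mathcal{M}(\fg,\mu)\to\wh\fg[\mu]$. Surjectivity is then immediate because, by \eqref{genwhfgmu}, the elements $t_1^m\ot e_{i(m)}^\pm$, $t_1^m\ot \al_{i(m)}^\vee$, and $\rk_1$ (for $i\in I$, $m\in\Z$) generate $\wh\fg[\mu]$, and each is hit by the map. I expect the main obstacle to be bookkeeping in (T2)–(T4): keeping track of the two independent $\Z_N$-summations, the phases $\xi^{km}$, and the central terms from $\rk_1$ requires care, but no conceptual difficulty once the reindexing trick is set up. The Serre relations, by contrast, reduce to the already-established root-system result Proposition \ref{prop:rootsys1} via Corollary \ref{classicalsr}, and are therefore essentially automatic.
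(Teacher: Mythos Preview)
Your proposal is correct and follows essentially the same approach as the paper: verify (T0)--(T4) by direct computation using the commutator formulas of Lemma \ref{lem:commutator}, handle the Serre-type relations (T5)--(T6) by a weight-space argument, and conclude surjectivity from \eqref{genwhfgmu}. The only cosmetic difference is that you invoke Corollary \ref{classicalsr} for (T5)--(T6), whereas the paper cites the stronger Proposition \ref{prop:rootsys1}; since Corollary \ref{classicalsr} is precisely the consequence needed here, your citation is in fact the sharper one.
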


\begin{proof}One needs to check that the generators $\al_{i(m)}^\vee,  e^\pm_{i(m)}, \rk_1$, $i\in I, m\in \Z$ of $\wh\fg[\mu]$
satisfy the defining relations (T0-T6) of $\mathcal M(\fg,\mu)$.
The relations (T0-T4) follow from a direct verification by using formula \eqref{lem:commutator},
and the relations (T5-T6) are immediate from Proposition \ref{prop:rootsys1}.
\end{proof}

Denote by $\phi_\mu:\mathcal{M}(\fg,\mu)\rightarrow \wh\fg$ the Lie homomorphism given in Lemma \ref{firsthom},
and $\bar\phi_\mu=\psi_\mu\circ \phi_\mu: \mathcal M(\fg,\mu)\rightarrow \CL(\bar\fg,\bar\mu)$ the composition of the map $\phi_\mu$
and the universal central extension $\psi_\mu:\wh\fg[\mu]\rightarrow \CL(\bar\fg,\bar\mu)$. By the universal property of $\psi_\mu$, we know that Theorem \ref{main2} is implied by the
following result.
\begin{prpt} \label{cent}The Lie homomorphism $\bar\phi_\mu:\mathcal{M}(\fg,\mu)\rightarrow \CL(\bar\fg,\bar\mu)$ is a central extension.
\end{prpt}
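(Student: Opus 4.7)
The plan is in two parts: surjectivity of $\bar\phi_\mu$ is immediate from surjectivity of $\phi_\mu$ (Lemma~\ref{firsthom}) together with the fact that $\psi_\mu$ is a central extension, so the entire content is to show $\ker\bar\phi_\mu\subset Z(\mathcal{M}(\fg,\mu))$. I would begin by transporting the $\wh Q_\mu$-grading of $\wh\fg[\mu]$ back to $\mathcal{M}(\fg,\mu)$: declare $x^\pm_{i,m}$ to be of weight $(\pm\check\al_i,m)$, $h_{i,m}$ of weight $(0,m)$ and $c$ of weight $(0,0)$. Each defining relation (T0)--(T6) is manifestly homogeneous for this assignment, so the grading is well-defined on $\mathcal{M}(\fg,\mu)$, and $\phi_\mu$ becomes a graded surjection onto the decomposition \eqref{fgmugrading}.

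The technical heart of the proof is the dimension bound $\dim\mathcal{M}(\fg,\mu)_{(\al,p)}\le 1$ for every non-isotropic $(\al,p)\in\wh Q_\mu^\times$, with zero whenever $(\al,p)\notin\Phi_\mu^\times$. Combined with Proposition~\ref{prop:rootsys1}, this forces $\phi_\mu$ to be injective on every non-isotropic weight space. To establish the bound, I would let $\check\fg\subset\mathcal{M}(\fg,\mu)$ be the subalgebra generated by $\{x^\pm_{i,0},h_{i,0},c:i\in\check I\}$; the degree-zero specialisations of (T2)--(T5), parallel to Corollary~\ref{checkfg}, identify $\check\fg$ with a quotient of the derived Kac--Moody algebra of the folded GCM $\check A$. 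Viewing $\mathcal{M}(\fg,\mu)$ as a $\check\fg$-module under the adjoint action, one checks integrability: $\mathrm{ad}\,x^\pm_{i,0}$ is locally nilpotent on the generating set by (T5) when the second index lies outside the $\mu$-orbit of $i$, by (T6) when it lies inside (the $s_i=2$ case), and by $\mathfrak{sl}_2$-arguments on the $h_{j,n}$ and $c$. Standard integrable module theory then makes $\dim\mathcal{M}(\fg,\mu)_{(\al,p)}$ invariant under $\check W$, Lemma~\ref{lem:rootsys0} reduces to $\al=k_i\check\al_i$, and a direct inspection at those weights yields the bound, with Corollary~\ref{classicalsr} eliminating the non-admissible $(\al,p)\in\wh Q_\mu^\times\setminus\Phi_\mu^\times$.

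Granting this bound, the conclusion is short. Since $Z(\wh\fg[\mu])\subset\CK_{1,r}$ consists of isotropic-weight elements, every $x\in\ker\bar\phi_\mu$ decomposes into pieces of isotropic weight. By (T1), $[x,c]=0$. For any generator $x^\pm_{j,m}$, the bracket $[x,x^\pm_{j,m}]$ has non-isotropic weight and is annihilated by $\phi_\mu$ (since $\phi_\mu(x)$ is central in $\wh\fg[\mu]$), hence vanishes by injectivity of $\phi_\mu$ on non-isotropic weight spaces. Finally, (T4) at $i=j$, $m=0$ gives $N_j\, h_{j,n}=[x^+_{j,0},x^-_{j,n}]$, so the Jacobi identity and the previous case yield
\[
N_j[x,h_{j,n}]=[[x,x^+_{j,0}],x^-_{j,n}]+[x^+_{j,0},[x,x^-_{j,n}]]=0.
\]
Thus $x$ commutes with every generator and lies in $Z(\mathcal{M}(\fg,\mu))$.

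The main obstacle is unambiguously the dimension bound of the second paragraph. In the untwisted case $\mu=\mathrm{id}$ this is essentially the original MRY bound, but the twisted presentation introduces the higher Serre-type relation (T6) precisely when $s_i=2$, and one must verify that (T6)---together with (T5) on cross terms and the $\mu$-covariance (T0)---is enough to eliminate all weight spaces outside $\Phi_\mu^\times$. Checking the local nilpotency of $\mathrm{ad}\,x^\pm_{i,0}$ not just on the generating set but across $\mathcal{M}(\fg,\mu)$, and handling the special $A^{(2)}_{2\ell}$ behaviour at the node $\epsilon=\ell$, are the places where the work is concentrated.
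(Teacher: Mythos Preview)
Your proposal is correct and follows essentially the same route as the paper: grade $\mathcal{M}(\fg,\mu)$ by $\wh Q_\mu$, prove the dimension bound on non-isotropic weight spaces via integrability over the folded Kac--Moody subalgebra and $\check W$-invariance (this is the paper's Lemma~\ref{supp}, including the detailed $s_i=2$ computation you anticipate), and then deduce centrality of $\ker\bar\phi_\mu$ from the fact that it lies in isotropic degrees while brackets with the $x^\pm_{i,m}$ land in non-isotropic ones. The only differences are cosmetic: the paper observes that the $x^\pm_{i,m}$ alone generate $\mathcal{M}(\fg,\mu)$, so your separate Jacobi step for $h_{j,n}$ is unnecessary, and the $A^{(2)}_{2\ell}$ node plays no special role in this argument.
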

The rest part of this subsection is devoted to a proof of Proposition \ref{cent}.
Notice that there is a (unique)
 $\wh{Q}_\mu$-grading  $\mathcal{M}(\fg,\mu)=\oplus_{(\al,n)\in \wh{Q}_\mu}\mathcal M(\fg,\mu)_{\al,n}$ on  $\mathcal{M}(\fg,\mu)$ such that
\[\deg c=(0,0),\quad \deg h_{i,m}=(0,m)\quad \text{and}\quad \deg x^{\pm}_{i,m}=(\pm \check{\al}_i,m),\quad i\in I, m\in \Z.\]
We also introduce a $\wh{Q}_\mu$-grading structure $\mathcal L(\bar\fg,\bar\mu)=\oplus_{(\al,n)\in \wh{Q}_\mu}
    \mathcal L(\bar\fg,\bar\mu)_{\al,n}$ so that the quotient map $\psi_\mu:\wh\fg[\mu]\rightarrow \mathcal L(\bar\fg,\bar\mu)$ is graded.
It is obvious that the homomorphism $\phi_\mu$  is $\wh{Q}_\mu$-graded (see \eqref{fgmugrading}) and so is the homomorphism $\bar\phi_\mu$.

Let $\mathcal{M}(\fg,\mu)^\pm$ be the subalgebra of $\mathcal{M}(\fg,\mu)$ generated by $\{x_{i,m}^\pm\,\mid\,i\in I,m\in \Z\}$, and  $\mathcal{M}(\fg,\mu)^0$
the subalgebra of $\mathcal{M}(\fg,\mu)$ generated by $\{h_{i,m}\,\mid\,i\in I,m\in \Z\}$. Then we have the following triangular decomposition
of $\mathcal{M}(\fg,\mu)$, whose proof is straightforward and omitted.
\begin{lemt}\label{mtri}
One has that $\mathcal{M}(\fg,\mu)=\mathcal{M}(\fg,\mu)^+\oplus \mathcal{M}(\fg,\mu)^0\oplus \mathcal{M}(\fg,\mu)^-$.
\end{lemt}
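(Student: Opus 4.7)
The plan is to first check that $\mathcal{M}(\fg,\mu)^+ + \mathcal{M}(\fg,\mu)^0 + \mathcal{M}(\fg,\mu)^-$ is a Lie subalgebra of $\mathcal{M}(\fg,\mu)$ and hence, as it contains all the generators $c$, $h_{i,m}$, $x^\pm_{i,m}$, equals the whole algebra; then establish directness of the sum from the $\pi_\mu(Q)$-grading that is implicit in the $\wh Q_\mu$-grading already introduced in the paper.

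For the closure step there are really only three nontrivial cases. First, $[\mathcal{M}(\fg,\mu)^0,\mathcal{M}(\fg,\mu)^\pm]\subset\mathcal{M}(\fg,\mu)^\pm$ follows from relation (T3) together with a direct induction on the bracket-length of elements of $\mathcal{M}(\fg,\mu)^\pm$, pushing each $h_{i,m}$ inward by Jacobi. Second, $[\mathcal{M}(\fg,\mu)^\pm,\mathcal{M}(\fg,\mu)^\pm]\subset\mathcal{M}(\fg,\mu)^\pm$ holds by definition. The heart of the matter is $[\mathcal{M}(\fg,\mu)^+,\mathcal{M}(\fg,\mu)^-]\subset\mathcal{M}(\fg,\mu)^0$: relation (T4) gives the base case, where one observes that (T2) already forces $c$ to lie inside the subalgebra generated by the $h_{i,m}$'s, so indeed $\C c\subset\mathcal{M}(\fg,\mu)^0$. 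One then runs a double induction on the bracket-lengths of $a\in\mathcal{M}(\fg,\mu)^+$ and $b\in\mathcal{M}(\fg,\mu)^-$; writing $a=[x^+_{i,m},a']$, the Jacobi identity
\[
 [a,b] = [x^+_{i,m},[a',b]] - [a',[x^+_{i,m},b]]
\]
reduces $[a,b]$ to the previously established brackets $[\mathcal{M}(\fg,\mu)^+,\mathcal{M}(\fg,\mu)^0]$, $[\mathcal{M}(\fg,\mu)^-,\mathcal{M}(\fg,\mu)^0]$, and strictly shorter instances of $[\mathcal{M}(\fg,\mu)^+,\mathcal{M}(\fg,\mu)^-]$.

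For the directness I would exploit that the defining relations (T0)--(T6) are homogeneous for the $\pi_\mu(Q)$-grading assigning degree $\pm\check\al_i$ to $x^\pm_{i,m}$ and degree $0$ to $h_{i,m}$ and $c$. Consequently $\mathcal{M}(\fg,\mu)^\pm$ is supported in weights lying in $\pm\bigl(\bigoplus_{i\in \check I}\Z_{\geq 0}\check\al_i\setminus\{0\}\bigr)$, while $\mathcal{M}(\fg,\mu)^0$ is supported entirely in weight $0$. Because $\{\check\al_i : i\in \check I\}$ are positive scalar multiples of the simple roots of the folded affine GCM $\check A$, they are linearly independent and span a genuine positive cone, so these three weight-supports are pairwise disjoint; this forces the sum to be direct.

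The main obstacle is the inductive argument that $[\mathcal{M}(\fg,\mu)^+,\mathcal{M}(\fg,\mu)^-]\subset\mathcal{M}(\fg,\mu)^0$, which is the standard (but slightly fiddly) bookkeeping step familiar from any Moody--Rao--Yokonuma or Kac--Moody type presentation. Once that is in place, the grading argument for directness is essentially automatic and the lemma follows.
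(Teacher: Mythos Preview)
The paper omits the proof entirely, declaring it ``straightforward and omitted,'' so there is nothing to compare against; your sketch is the standard Kac--Moody/MRY argument and is essentially sound.

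One point does need correction: the displayed claim $[\mathcal{M}(\fg,\mu)^+,\mathcal{M}(\fg,\mu)^-]\subset\mathcal{M}(\fg,\mu)^0$ is false as stated. For instance, with $\check a_{ij}<0$ one computes via Jacobi and (T3)--(T4) that $[[x^+_{i,0},x^+_{j,0}],x^-_{j,0}]$ is a nonzero multiple of $x^+_{i,0}\in\mathcal{M}(\fg,\mu)^+$, not an element of $\mathcal{M}(\fg,\mu)^0$. What your induction actually establishes---and what suffices for the lemma---is
\[
[\mathcal{M}(\fg,\mu)^+,\mathcal{M}(\fg,\mu)^-]\subset\mathcal{M}(\fg,\mu)^+ +\mathcal{M}(\fg,\mu)^0+\mathcal{M}(\fg,\mu)^-;
\]
indeed your own description of the Jacobi step already produces terms in $[\mathcal{M}(\fg,\mu)^\pm,\mathcal{M}(\fg,\mu)^0]\subset\mathcal{M}(\fg,\mu)^\pm$. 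The cleanest way to organize this is to first prove, by induction on the bracket-length of $b\in\mathcal{M}(\fg,\mu)^-$ alone, that $[x^+_{i,m},b]\in\mathcal{M}(\fg,\mu)^0+\mathcal{M}(\fg,\mu)^-$ (and symmetrically), so that the sum is stable under $\mathrm{ad}$ of every generator and hence is an ideal containing the generators. Your directness argument via the $\pi_\mu(Q)$-grading is fine: the $\check\al_i$ for $i\in\check I$ are linearly independent, so the positive cone, the negative cone, and $\{0\}$ are pairwise disjoint.
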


 Recall from Lemma \ref{lem:rootsys0} that $\pi_\mu(\Delta)^\times=\{\check{w}(k_i\check{\al}_i)\mid
 \check{w}\in \check{W}, i\in \check{I}, 1\le k_i\le s_i\}$.

\begin{lemt}\label{supp}Let $(\al,p)\in \wh{Q}_\mu^\times$.
Then the following results hold true

(1)  if $\mathcal{M}(\fg,\mu)_{\al,p}\neq 0$,
then $\al\in\pi_\mu(\Delta)^\times$;

(2) if $\al=\check{w}(\check{\al}_i)$
for some $i\in \check{I}$ and $\check{w}\in \check{W}$, then
the dimension of the graded subspace $\mathcal{M}(\fg,\mu)_{\al,p}$ is $1$ if $p\in d_i\Z$, and
is $0$ otherwise;

(3) if $\al= \check{w}(2\check{\al}_i)$ for some $i\in \check{I}$ with $s_i=2$ and $\check{w}\in \check{W}$,
then the dimension of the graded subspace $\mathcal{M}(\fg,\mu)_{\al,p}$ is $1$  if $p\in d_i+N\Z$, and is $0$ otherwise.
\end{lemt}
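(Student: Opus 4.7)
The plan is to combine the surjection $\phi_\mu:\mathcal{M}(\fg,\mu)\to\wh\fg[\mu]$ from Lemma \ref{firsthom}, which is $\wh{Q}_\mu$-graded and therefore gives the lower bound $\dim\mathcal{M}(\fg,\mu)_{\al,p}\geq\dim\wh\fg[\mu]_{\al,p}$, with Proposition \ref{prop:rootsys1}; this already supplies the ``$\geq 1$'' direction in (2) and (3). The substantive task is the matching upper bound, together with the vanishing statement outside the prescribed $p$-conditions.

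My first step is to restrict to $\mathcal{M}(\fg,\mu)^+$ via the triangular decomposition of Lemma \ref{mtri}. I would then argue that the subalgebra of $\mathcal{M}(\fg,\mu)$ generated by $\{h_{i,0},x^\pm_{i,0}:i\in\check{I}\}$ is a homomorphic image of the derived Kac-Moody algebra of type $\check{A}$: the degree-zero specializations of (T0)--(T4) furnish the Chevalley relations, while (T5) together with Corollary \ref{classicalsr} supplies the Serre relations. The relations (T3), (T5), and (T6) jointly imply that $\mathrm{ad}\,x^\pm_{i,0}$ acts locally nilpotently on every generator of $\mathcal{M}(\fg,\mu)$, making $\mathcal{M}(\fg,\mu)$ an integrable module over this Kac-Moody subalgebra. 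Standard arguments then endow it with a $\check{W}$-action that preserves the $p$-grading and carries $\mathcal{M}(\fg,\mu)^+_{\check{w}(\beta),p}$ isomorphically onto $\mathcal{M}(\fg,\mu)^+_{\beta,p}$, reducing (1)--(3) to the case $\check{w}=1$. Part (1) is then obtained by combining Lemma \ref{lem:rootsys0} with the integrable $\check{W}$-orbit structure and Corollary \ref{classicalsr}, which together rule out non-isotropic weights outside $\pi_\mu(\Delta)^\times$.

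For $\beta=\check{\al}_i$, the $\wh{Q}_\mu$-grading and the linear independence of $\{\check{\al}_j\}_{j\in\check{I}}$ in $V_\mu$ force $\mathcal{M}(\fg,\mu)^+_{\check{\al}_i,p}=\C x^+_{i,p}$; iterating (T0) yields $x^+_{i,p}=x^+_{\mu^{N_i}(i),p}=\xi^{pN_i}x^+_{i,p}$, so $x^+_{i,p}=0$ unless $p\in d_i\Z$, which settles (2). For $\beta=2\check{\al}_i$ with $s_i=2$, I would examine the Lie subalgebra $\mathcal{A}_i\subset\mathcal{M}(\fg,\mu)$ generated by $\{h_{i,m},x^\pm_{i,m}:m\in\Z\}$. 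Since $\{i,\mu(i)\}$ is an $A_2$-subsystem of $\fg$ with $a_{i,\mu(i)}=-1$ on which $\mu$ acts as the order-two diagram flip, the relations (T0)--(T6) restricted to this orbit present $\mathcal{A}_i$ as a homomorphic image of the derived algebra of the twisted affine Kac-Moody algebra of type $A_2^{(2)}$, whose non-isotropic root spaces are classically one-dimensional. This yields the upper bound; comparison with $\wh\fg[\mu]$ then fixes the nonvanishing condition $p\in d_i+N\Z$.

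The main obstacle is the identification of $\mathcal{A}_i$ with a quotient of the $A_2^{(2)}$ derived algebra: relation (T6) by itself merely bounds the depth of iterated brackets, and one must carefully synthesize (T0) with the Cartan data encoded in (T2)--(T4) to recognize the correct affine Cartan matrix. Once this identification is secured, the dimension bound is classical and the remainder of the argument follows by standard Kac-Moody inputs.
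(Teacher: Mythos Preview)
Your framework for (1) and (2) matches the paper's: build the degree-zero Kac--Moody subalgebra from $h_{i,0},x^\pm_{i,0}$, use (T3), (T5), (T6) to get integrability, and let $\check{W}$ reduce to the case $\check{w}=1$. Two corrections are in order. First, the $\check{W}$-action permutes root spaces of $\mathcal{M}(\fg,\mu)$, not of $\mathcal{M}(\fg,\mu)^+$, so drop the superscript there. Second, Corollary~\ref{classicalsr} is a statement about $\wh\fg[\mu]$, not about $\mathcal{M}(\fg,\mu)$; it does not by itself constrain the weights of the latter. What the paper actually does for (1) is a height induction inside $\mathcal{M}(\fg,\mu)$: reflect down until $\al=k\check{\al}_i$, and then invoke (T6) directly to force $1\le k\le s_i$.

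For (3) you take a genuinely different route from the paper, and the point you flag as the ``main obstacle'' is a real gap. To exhibit $\mathcal{A}_i$ as a quotient of the derived $A_2^{(2)}$ you must either send the Chevalley generators $e_0^\pm,e_1^\pm$ to specific elements of $\mathcal{A}_i$ and verify the Kac--Moody Serre relations---including the quintic one coming from the off-diagonal entry $-4$---or else invoke a Drinfeld-type presentation of $A_2^{(2)}$ and check that its defining relations follow, with matching structure constants, from the restrictions of (T0)--(T6). Neither is supplied, and neither is immediate; without one of them the upper bound $\dim\le 1$ in (3) is not established.

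The paper bypasses this entirely. For $p\in N\Z$ it derives $[x^+_{i,mN/2},x^+_{i,nN/2}]=0$ whenever $m\equiv n\pmod 2$ by a three-line computation from (T3), (T4), (T6). For $p\in d_i+N\Z$ it observes that the span of $\mathcal{M}(\fg,\mu)_{k\check{\al}_i,p}$ ($k=\pm 1,\pm 2$) together with $\C h_{i,p}$ is an irreducible module for the $\mathfrak{sl}_2$ spanned by $x^\pm_{i,0},h_{i,0}$, which forces each weight space to be one-dimensional. This is self-contained and avoids any external structure theorem; your approach, if completed, would be more conceptual but would require importing or reproving a presentation result for $A_2^{(2)}$.
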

\begin{proof} Denote by $\mathcal{M}_0(\fg,\mu)$ the subalgebra of $\mathcal{M}(\fg,\mu)$ generated by the  elements
\[h_{i,0},\ x_{i,0}^\pm,\quad i\in \check{I}.\]
Then one concludes from the relations (T2)-(T5) that $\mathcal{M}_0(\fg,\mu)$ is the derived subalgebra of the Kac-Moody
algebra associated to $\check{A}$. Viewing $\mathcal{M}(\fg,\mu)$ as an $\mathcal{M}_0(\fg,\mu)$-module by the adjoint action,
we see from (T3)-(T6) that the  $\mathcal{M}_0(\fg,\mu)$-module $\mathcal{M}(\fg,\mu)$ is integrable.  Moreover, for each $p\in \Z$, the  subspace
\[\mathcal{M}(\fg,\mu)_p=\bigoplus_{(\alpha,p)\in \wh Q_\mu} \mathcal{M}(\fg,\mu)_{\al,p}\]
of $\mathcal M(\fg,\mu)$ is an $\mathcal{M}_0(\fg,\mu)$-submodule. A standard $\mathfrak{sl}_2$-theory argument gives that
\[\dim \mathcal{M}(\fg,\mu)_{\al,p}=\dim\mathcal{M}(\fg,\mu)_{\check{w}(\al),p},\quad \check{w}\in \check{W}.\]

Assume now that $\mathcal{M}(\fg,\mu)_{\al,p}\neq 0$ for some $(\al,p)\in \wh{Q}_\mu$.
 We now prove that $\alpha\in \pi_\mu(\Delta)^\times$ by using
induction on $\mathrm{ht}\,\al$. Here and as before, $\mathrm{ht}\,\al=\sum_{i\in \check{I}}n_i$ if
$\al=\sum_{i\in\check{I}}n_i\check{\al}_i$. By Lemma \ref{mtri}, the integers $n_i, i\in \check{I}$
are either all non-negative or all non-positive.  We assume  that $\mathrm{ht}\,\al>0$, so that all $n_i$ are non-negative.
Then there exist some $i\in \check{I}$ such that $(\check{\al}_i,\al)>0$ and $n_i>0$.
If $\mathrm{ht}\, r_{\check{\al}_i}(\al)>0$, then we are done by the induction hypothesis.
Otherwise $\mathrm{ht}\, r_{\check{\al}_i}(\al)<0$ and so $\al=k\check{\al}_i$ for some positive integer $k$.
But the relation (T6) forces that $1\le k\le s_i$. This proves the assertion (1).

The assertion (2) is implied by (T0) as $\mathcal{M}(\fg,\mu)_{\check\al_i,p}=\C x_{i,p}^+$. As for the assertion (3), we have that
$N_i=2$ and $\al_{i\mu(i)}=-1$ in this case. Then by the assertion (2) and Lemma \ref{mtri}, we get that
\begin{eqnarray}\label{eq:serre0}
\mathcal{M}(\fg,\mu)_{2\check{\al}_i,p}=
\sum\limits_{\substack{m+n=p\\m,n\in (N/2)\Z}}
    \left[\mathcal{M}(\fg,\mu)_{\check{\al}_i,m},
        \mathcal{M}(\fg,\mu)_{\check{\al}_i,n}\right].
\end{eqnarray}
So the proof of the assertion (3) can be reduced to the proof of the following facts: $\mathcal{M}(\fg,\mu)_{2\check{\al}_i,p}=0$ if
$p\in N\Z$, and $\dim \mathcal{M}(\fg,\mu)_{2\check{\al}_i,p}=1$ if $p\in N/2+N\Z$.
We first show that $\mathcal{M}(\fg,\mu)_{2\check{\al}_i,p}=0$ if
$p\in N\Z$. By \eqref{eq:serre0}, this is implied by
\begin{align}\label{T7}
[x^+_{i,mN/2}, x^+_{i,nN/2}]=0,\quad \te{if}\ m\equiv n\ (\mathrm{mod}\ 2).
\end{align}
Using (T4), we have that
\[[x^+_{i,mN/2}, x^-_{i,nN/2}]=\frac{N}{2}h_{i,(m+n)N/2}+a\,c,\]
for some $a\in \C$.
And by (T3), we have that
\[[h_{i,mN/2}, x^+_{i,nN/2}]=\frac{(2-(-1)^m)N}{2}x^+_{i,(m+n)N/2}.\]
Thus, if $m\equiv n\ (\mathrm{mod}\ 2)$, then
\begin{align*}
[[x^+_{i,0},x^-_{i,0}], [x_{i,mN/2}^+,x^+_{i,nN/2}]]&=\frac{N^2}{2}[x_{i,mN/2}^+,x^+_{i,nN/2}],\\
 [[x^+_{i,mN/2}, x^+_{i,nN/2}], x_{i,0}^-]&=0.
\end{align*}
Combining with (T6), we get that
\begin{align*}&\frac{N^2}{2}[x_{i,mN/2}^+,x^+_{i,nN/2}]=[[x^+_{i,0},x^-_{i,0}], [x_{i,mN/2}^+,x^+_{i,nN/2}]]\\
= &[[x^+_{i,0}, [x_{i,mN/2}^+,x^+_{i,nN/2}]],x^-_{i,0}]+ [x_{i,0}^+,[[x^+_{i,mN/2}, x^+_{i,nN/2}], x_{i,0}^-]]\\
= &[[x^+_{i,0},[x_{i,mN/2}^+,x^+_{i,nN/2}]],x_{i,0}^-]=0,
\end{align*}
This completes the verification of \eqref{T7}.

We now turn to prove the fact that  $\dim \mathcal{M}(\fg,\mu)_{2\check{\al}_i,p}=1$ if $p\in N/2+N\Z$.
It follows from (T3) and (T4) that
\begin{eqnarray}\label{eq:serre1}
&&[ x^-_{i,0},[x^-_{i,0},\mathcal{M}(\fg,\mu)_{2\check \al_i,p}]]\subset \C h_{i,p}.
\end{eqnarray}
It is immediate from the (T2), (T3) and (T4) that $\C x^+_{i,0}+\C x^-_{i,0}+\C h_{i,0}\cong \mathfrak{sl}_2$.
Then by \eqref{eq:serre0}, \eqref{eq:serre1} and the assertion (1),
we find that the space spanned by
\[\mathcal{M}(\fg,\mu)_{k\check{\al}_i,p},\ h_{i,p},\ k=\pm 1, \pm 2\]
is an irreducible $\mathfrak{sl}_2$-module.
This gives that $\dim \mathcal{M}(\fg,\mu)_{2\check{\al}_i,p}\leq 1$.
But one can conclude from Proposition \ref{prop:rootsys1}   that
\[\dim \mathcal{M}(\fg,\mu)_{2\check{\al}_i,p}\ge \dim \wh\fg[\mu]_{2\check{\al}_i,p}=1,\]
as   $\phi_\mu$ is a graded surjective homomorphism. Thus we complete the proof of the assertion (3).
\end{proof}

Now we are in a position to complete the proof of Proposition \ref{cent}.
It follows from Proposition \ref{prop:rootsys1} and
Lemma \ref{supp} that
\begin{align}\label{centpf1}\ker\bar\phi_\mu\subset \mathcal{M}(\fg,\mu)^{\mathrm{iso}}=\bigoplus_{(\al,p)\in \wh{Q}_\mu^0}\mathcal{M}(\fg,\mu)_{\al,p},\end{align}
where \[\wh{Q}_\mu^0=\{(\al,p)\in \wh{Q}_\mu\mid (\al,\al)=0\}.\]
Note that $\wh{Q}^0_\mu+\wh{Q}^\times_\mu\subset \wh{Q}^\times_\mu$, which in particular shows that
\begin{align}\label{centpf2}
[x_{i,m}^\pm, \mathcal{M}(\fg,\mu)^{\mathrm{iso}}]\cap \mathcal{M}(\fg,\mu)^{\mathrm{iso}}=\{0\},\quad
\te{for }i\in I,m\in\Z.
\end{align}
Finally,
 Proposition \ref{cent} is implied  by \eqref{centpf1} and \eqref{centpf2}, as the Lie algebra $\mathcal{M}(\fg,\mu)$ is generated by the elements $x_{i,m}^\pm, i\in I, m\in \Z$.

%\bibliographystyle{unsrt}
%\bibliographystyle{amsalpha}
%\bibliographystyle{plain}
%\bibliography{../reference}
% \bib, bibdiv, biblist are defined by the amsrefs package.

\end{document}